\documentclass[reqno,12pt,letterpaper]{amsart}
\usepackage{amsmath,amssymb,amsthm,graphicx,mathrsfs,url,bbm,array,enumerate}
\usepackage[usenames,dvipsnames]{xcolor}
\usepackage[colorlinks=true,linkcolor=Red,citecolor=Green]{hyperref}
\usepackage{tikz-cd}
\usepackage{tikz}
\usepackage{pgfplots}
\usepackage{subfigure}
\usepackage{mathabx}
\usepackage{mathtools}

\usepackage[
    backend=biber,
    style=alphabetic,
    giveninits=true,
    maxalphanames=10,
    maxnames=10
]{biblatex}
\DeclareFieldFormat{pages}{#1}
\renewbibmacro{in:}{%
  \ifentrytype{article}
    {}
    {\bibstring{in}%
     \printunit{\intitlepunct}}}
\DeclareFieldFormat
  [article,inbook,incollection,inproceedings,patent,thesis,unpublished]
  {title}{\mkbibemph{#1}}
\DeclareFieldFormat{journaltitle}{#1\isdot}
\DeclareFieldFormat[article]{volume}{\mkbibbold{#1}}
\DeclareFieldFormat[article]{number}{\bibstring{number}\addnbspace #1}

\renewbibmacro*{journal+issuetitle}{%
  \usebibmacro{journal}%
  \setunit*{\addspace}%
  \iffieldundef{series}
    {}
    {\newunit
     \printfield{series}%
     \setunit{\addspace}}%
  \printfield{volume}%
  \setunit{\addspace}%
  \usebibmacro{issue+date}%
  \setunit{\addcomma\space}%
  \printfield{number}%
  \setunit{\addcolon\space}%
  \usebibmacro{issue}%
  \setunit{\addcomma\space}%
  \printfield{eid}
  \newunit}

 \DeclareFieldFormat*{title}{#1}
 \DeclareFieldFormat{title}{#1}
 \DeclareFieldFormat
   [article,inbook,incollection,inproceedings,patent,thesis,unpublished]
   {title}{\mkbibquote{#1\isdot}}
 \DeclareFieldFormat
   [suppbook,suppcollection,suppperiodical]
   {title}{#1}

\newcolumntype{L}{>{$}l<{$}}

\def\?[#1]{\textbf{[#1]}\marginpar{\Large{\textbf{??}}}}
\newtheorem{thm}{Theorem}
\newtheorem*{thm*}{Theorem}
\newtheorem{prop}{Proposition}

\newtheorem{lem}[prop]{Lemma}
\newtheorem{cor}[prop]{Corollary}
\newtheorem{rem}[prop]{Remark}

\numberwithin{equation}{section}
\numberwithin{prop}{section}

\theoremstyle{definition}

\newtheorem{notation}{Notation}

\DeclareMathOperator{\supp}{supp}

\DeclareMathOperator{\Tr}{{\rm tr}}
\DeclareMathOperator{\Vol}{Vol}

\DeclareMathOperator{\injrad}{InjRad}

\newcommand{\Hom}{{\rm Hom}}
\newcommand{\id}{{\rm id}}

\newcommand{\tr}{\mathrm{tr}}

\newcommand{\bbE}{\mathbb E}

\newcommand{\bbH}{\mathbb H}

\newcommand{\bbN}{\mathbb N}

\newcommand{\bbR}{\mathbb R}

\newcommand{\bbX}{\mathbb X}

\newcommand{\bbZ}{\mathbb Z}

\newcommand{\calP}{\mathcal P}

\newcommand{\calR}{\mathcal R}

\definecolor{green}{rgb}{0,0.8,0}

\begin{document}

\title[Polynomial bounds for eigenfunctions and eigenvalues on random covers]{Polynomial bounds for eigenfunctions and eigenvalues on random covers of hyperbolic surfaces}

\author{Elena Kim}
\address[Elena Kim]{Harvard University, Center of Mathematical Sciences and Applications, Cambridge, MA 02138, USA}
\email{elenakim@fas.harvard.edu}

\author{Zhongkai Tao}
\address[Zhongkai Tao]{Institut des Hautes \'{E}tudes Scientifiques, 91440 Bures-sur-Yvette, France}
\email{ztao@ihes.fr}

\begin{abstract}
Let $X$ be a compact connected orientable hyperbolic surface and let $X_n$ be a degree $n$ cover, taken uniformly at random. We show that, with high probability, the $L^\infty$ norm of every Laplace eigenfunction on $X_n$ with bounded eigenvalue decays polynomially in $n$. 
This gives a polynomial decay analogue of the logarithmic bound of \cite{GLMST} in the Weil--Petersson model.
Using similar methods,  we also show that, with high probability, the distribution of eigenvalues of the Laplacian on $X_n$ converges to the spectral measure of the hyperbolic plane with polynomially decaying error. 
Our proof relies on the Selberg pre-trace formula and a variant of the polynomial method.
\end{abstract}

\maketitle

\section{Introduction}

Let $X$ be a compact, connected, orientable hyperbolic surface. As the geodesic flow on such a surface is classically chaotic, we expect the asymptotic delocalization of its Laplace eigenfunctions on $X$, an indicator of quantum chaos.
More precisely, let $\Delta_X$ be the (non-negative) Laplace operator on $X$ and $u_j$ be the $j$-th eigenfunction with eigenvalue $\lambda_j\geq 0$:
\begin{equation*}
    \Delta_X u_j =\lambda_j u_j,\quad \|u_j\|_{L^2}=1.
\end{equation*}
Then on compact hyperbolic surfaces, we expect an improvement  to H\"ormander's \cite{Hor68}  $L^\infty$-bound for general compact surfaces: 
\begin{equation*}\label{eq:hormander}
    \|u_j\|_{L^{\infty}(X)}\leq C(1+\lambda_j)^{1/4}\|u_j\|_{L^2(X)}.
\end{equation*}

In the high frequency limit $\lambda_j \rightarrow \infty$, B\'erard \cite{Be77} obtained a  logarithmic improvement  for a family of surfaces which includes hyperbolic surfaces. Specifically, he proved the bound
\begin{equation}\label{eq:Berard}
    \|u_j\|_{L^\infty(X)}\leq C(1+\lambda_j)^{1/4}(\log(2+\lambda_j))^{-1/2}\|u_j\|_{L^2(X)}.
\end{equation}
In this paper, we study the high genus limit, rather than the high-frequency limit. Instead of  control in $\lambda$, we take a random family of hyperbolic surfaces with genus $g$ going to infinity, and ask for control in $g$.

The only previous result on $L^\infty$-bounds in the high genus limit is due to \cite{GLMST}, studying the Weil--Petersson model. The authors show that with high probability with respect to the Weil--Petersson measure,  eigenfunctions on a hyperbolic surface of genus $g$ satisfy an  estimate with logarithmic decay in $g$:
\begin{equation}\label{eq:GLMST-bound}
    \|u_j\|_{L^{\infty}(X)}\leq C(\lambda) (\injrad(X) \log g)^{-1/2}\|u_j\|_{L^2(X)}.
\end{equation}
Here, $\injrad(X)$ is the injectivity radius of $X$, i.e.,
 half the length of the shortest closed geodesic on $X$.
 
In this paper, we consider the \emph{random cover model}. Namely, we fix a hyperbolic surface $X$, and take a degree $n$ cover uniformly at random. All probabilities in this paper are with respect to the choice of random surface.  For the random cover model, we obtain a polynomial improvement over the  Weil--Petersson result ~\eqref{eq:GLMST-bound}.

\begin{thm}\label{thm:eigenfn}
    Let $X$ be a compact connected orientable hyperbolic surface of genus $g_0\geq 2$ and let $\varepsilon>0$. There exist $\alpha=\alpha(g_0, \varepsilon)>0$ and $C=C(X)>0$ such that the following is true. Let $X_n$ be a degree $n$ cover of $X$ taken uniformly at random and let $u_j$ be the $j$-th normalized eigenfunction of $\Delta_{X_n}$:
    \begin{equation*}
        \Delta_{X_n} u_j = \lambda_j(X_n) u_j,\quad \|u_j\|_{L^2(X_n)}=1.
    \end{equation*}
    Then with probability at least $1-n^{-1/10}$, for every $\Lambda\geq 1/4$ and every $\lambda_j(X_n)\leq \Lambda$, we have
    \begin{equation}\label{eq:linfty}
        \|u_j\|_{L^{\infty}(X_n)}\leq C \Lambda^{\frac{1}{4}+\varepsilon}n^{-\alpha}\|u_j\|_{L^2(X_n)}.
    \end{equation}
\end{thm}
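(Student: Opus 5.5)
The strategy is to bound $|u_j(z)|^2$ by a spectral smoothing of the pointwise kernel. By the Selberg pre-trace formula this becomes a sum over closed loops based at $z$. We then show, using the polynomial method, that for a random cover there are few short non-trivial loops through any lift of a point. Concretely, fix $z\in X_n$ and choose an even test function $h$ whose Selberg/Harish-Chandra transform $k_T$ is supported in $[-T,T]$, with $h\ge 0$ on the spectrum and $h(r)\gtrsim e^{-c}\,\Lambda^{-1/2-\varepsilon}\,T$ for all $\lambda=1/4+r^2\le\Lambda$. A standard choice is $h=|\widehat{\phi_T}|^2$ for a compactly supported bump $\phi_T$ at scale $T$, with the usual treatment of exceptional eigenvalues $\lambda<1/4$, where $r$ is imaginary. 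The pre-trace formula on $X_n=\Gamma_n\backslash\bbH$ then gives
\begin{equation*}
  h(r_j)\,|u_j(z)|^2\;\le\;\sum_{\gamma\in\Gamma_n} k_T\bigl(d(\tilde z,\gamma\tilde z)\bigr)
  \;=\;k_T(0)+\sum_{\gamma\neq 1,\ d(\tilde z,\gamma \tilde z)\le T} k_T\bigl(d(\tilde z,\gamma\tilde z)\bigr).
\end{equation*}
The identity term is $O(1)$, independent of $T$, up to $\Lambda$-dependence. The terms with $\gamma\neq 1$ are bounded by $|k_T|\lesssim e^{-\rho/2}$ (times polynomial factors) times the number of such $\gamma$. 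Dividing by $h(r_j)\gtrsim T$ produces the gain $T^{-1}$. We then take $T=c\log n$, so it remains to show that the loop sum is $O(1)$. This step only gives a logarithmic gain. To get polynomial decay we will instead exploit the full spectral sum: the choice of $T$ and $h$ can make $h(r_j)$ as large as $e^{cT}$ near the bottom of the spectrum, with $T=c\log n$. This is the key difference from the Weil--Petersson setting.

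The probabilistic input is as follows. Write $X=\Gamma\backslash\bbH$. A degree $n$ cover corresponds to a uniformly random homomorphism $\phi:\Gamma\to S_n$, and a point $z\in X_n$ is a pair (a lift $\tilde z$ of a point of $X$, a label $i\in[n]$). Then $\gamma\in\Gamma$ is a loop at $z$ exactly when $\phi(\gamma)$ fixes $i$. For each $\gamma\in\Gamma$ that is not a proper power, the expected number of fixed points $\bbE[\mathrm{fix}_\gamma]$ is known to equal $1+O(\ell_w(\gamma)^{c}/n)$, with a precise rational-function expansion in $1/n$; we assume this from the random-cover literature, e.g.\ Magee--Puder. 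Summing $\mathrm{fix}_\gamma-1$ over the roughly $e^{T}$ non-trivial $\gamma$ of displacement $\le T$, weighted by $e^{-\rho/2}$, gives a sum that we must show is $O(n^{-\alpha})$-small relative to the main term. For this we need $\bbE\bigl[\sum_\gamma k_T\,(\mathrm{fix}_\gamma-1)\bigr]$ to be small for $T$ a small multiple of $\log n$. That in turn requires cancellation in the $1/n$-expansion up to order $q\sim T$.

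Here the polynomial method enters. The expectation is a rational function of $1/n$ whose degree is $\lesssim T$, and it is bounded a priori by $e^{T}$. It is also controlled by its values at large $n$ (via the known $1/n$ expansion and bounds on the coefficients). A Markov-brothers type inequality then bounds the $1/n$-derivative terms by $T^{O(1)}e^{O(T)}/n$. Choosing $T=\delta\log n$ with $\delta=\delta(g_0,\varepsilon)$ small yields an error of size $n^{-1+O(\delta)}$. Markov's inequality applied to the nonnegative random variable $\sum_z$ (averaged over the $n$ sheets and a fundamental domain, using a union bound over a $n^{-10}$-net of $z$ together with Sobolev/gradient bounds on eigenfunctions) then gives the failure probability $n^{-1/10}$.

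The $\Lambda^{1/4+\varepsilon}$ factor comes from the lower bound on $h(r_j)$ uniformly in $r_j\le\sqrt\Lambda$: with $\phi_T$ chosen of Sobolev regularity adapted to $\varepsilon$, $h(r)\gtrsim (1+r)^{-1-4\varepsilon}$ times the main factor. Taking square roots gives the power $\Lambda^{1/4+\varepsilon}$. The main obstacle is the polynomial-method step: proving that the expected twisted loop count is polynomially small uniformly over basepoints for $T\asymp\log n$. The difficulty is controlling the degree and the coefficients of the rational function in $1/n$ well enough for the Markov inequality to close. I would first try to reduce this to known bounds on $\bbE[\mathrm{fix}_\gamma]$ for words of length $\le q$, following the strategy used for spectral gaps of random covers.
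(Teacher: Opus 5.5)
There is a genuine gap: as set up, your argument cannot get past logarithmic decay, and the proposed upgrade does not work. If $k_T$ is supported in $[-T,T]$, then $h$ is entire of exponential type $T$ and cannot concentrate on a spectral window narrower than about $1/T$. Consequently the identity term $\frac{1}{2\pi}\int h(r)\,r\tanh(\pi r)\,dr$ divided by $h(r_j)$ is $\gtrsim 1/T$ for $\lambda_j$ away from $1/4$, and it is only polynomially small in $T$ in all cases. This holds even if every nontrivial loop term vanished. It is exactly why B\'erard and \cite{GLMST} get only logarithmic gains, and with $T=c\log n$ you are stuck at $(\log n)^{-O(1)}$.

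Your claim that $h(r_j)$ can be as large as $e^{cT}$ holds only for $r_j\in i\bbR$, i.e.\ $\lambda_j<1/4$. The theorem concerns all $\lambda_j\le\Lambda$. Moreover, for random covers the new eigenvalues are $\ge 1/4-O(n^{-b})$ with high probability \cite{HMT25a}, so $e^{|r_j|T}=O(1)$ anyway.

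Polynomial decay forces spectral windows of width $n^{-\alpha_0}$, i.e.\ loops of length polynomial in $n$. At that scale your polynomial-method step collapses: an a priori bound $e^{T}$ from loop counting, times the Markov brothers' loss, gives $e^{O(n^{c})}/n$. The missing idea is an a priori bound that is uniform in the degree. The paper uses $h\circ f_{\Lambda_0}$, where $\check f$ is supported in $[-1,1]$, so the kernel of $f_{\Lambda_0}(\sqrt{\Delta-1/4})$ only sees loops shorter than the Margulis constant. Here $h(x)=x\tilde h(x)$ is a polynomial of degree $q$. Since $f_{\Lambda_0}\ge 0$ on the spectrum, $\bigl|\sum_j h\circ f_{\Lambda_0}(r_j)|u_j(z)|^2\bigr|\le\|\tilde h\|\sum_j f_{\Lambda_0}(r_j)|u_j(z)|^2\lesssim\Lambda_0^{3/2}\delta_0^{-1}\|\tilde h\|$, independently of $q$ (Lemma~\ref{lem:unif-fn}). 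The Markov brothers' inequality then costs only $q^{4\kappa}$. Cutoffs of width $n^{-\alpha_0}$ are reached by Chebyshev expansion (Corollary~\ref{cor:expect-fn-prop}).

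The second gap is uniformity in $z$. The pre-trace loop sum at $(z,i)$ involves $\mathbf{1}[\phi(\gamma)i=i]$, not $\mathrm{fix}_\gamma-1$; that combination belongs to the trace formula for new eigenvalues. The loop sum is not a nonnegative random variable. A union bound over an $n^{-10}$-net of a surface of area $\asymp n$ needs per-point failure probabilities far below anything a first-moment bound provides. Worse, $X_n$ has closed geodesics of bounded length with probability bounded below. Along them, $\sum_{k\neq 0}K(z,\gamma^k z)$ is of the same order as the identity term, so these contributions must be handled deterministically.

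The paper handles this as follows.
\begin{itemize}
\item It takes the fourth power $V$ of the pre-trace remainder and integrates $V^2$ over $z$, which involves eight loops.
\item The diagonal part $S(z)$, where all four loops are powers of one primitive element, is bounded deterministically by $\Lambda_0^6\delta_0^{-4}n^{-4\alpha_0}$. This uses the narrowness of the window and the convergence of $\sum_\gamma e^{-2d(z,\gamma z)}$, which is where the fourth power is needed.
\item In the off-diagonal part over $\widetilde\Gamma^4$, two loops generate a free group of rank two, so $\bbE[\mathrm{fix}_{\langle\gamma_j,\gamma_k\rangle}]\lesssim 1/n$ by Magee--Puder. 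The rational approximation therefore vanishes at $1/n=0$.
\item Proposition~\ref{prop:expect-fn} then gives $\bbE\int_{Y_{\delta_0}}(V-S)^2\,dz\lesssim \Lambda_0^{12}\delta_0^{-8}q^{4\kappa}\|\tilde h\|^8/n$ in absolute terms, even though $\Vol(X_n)\asymp n$.
\item Chebyshev's inequality controls the $L^8$ norm of the window's spectral density on every ball $B(z_0,\log 2)$ at once.
\item Elliptic estimates convert local $L^2$ mass into an $L^\infty$ bound $\Lambda^2 n^{-\alpha}$.
\end{itemize}
Finally, the factor $\Lambda^{1/4+\varepsilon}$ comes from interpolating with the uniform bound $\Lambda^{1/4}$, not from a lower bound on $h(r_j)$.
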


Setting $\Lambda =\lambda_j$, B\'{e}rard's bound \eqref{eq:Berard} beats our result as $\lambda_j \rightarrow \infty$. However, for fixed $\Lambda$, our result obtains polynomial decay in the large cover limit. 

What do we expect to be the \emph{optimal} exponent for \eqref{eq:linfty}? To answer this question, we turn to $d$-regular graphs. Regular graphs with the graph Laplacian are a popular discrete model for the spectral theory of hyperbolic surfaces. 
In \cite[Theorem 1.4]{HYspec}, Huang and Yau  prove that with high probability,  an eigenvector on a random regular graph $G$ on  $N$ vertices is \emph{optimally delocalized}:
\begin{equation}\label{eq:HY1.4}
    \|u\|_{\ell^{\infty}}\leq C\frac{(\log N)^C}{\sqrt{N}}\|u\|_{\ell^2}.
\end{equation}
In \cite{GLMST}, the authors conjectured that an analogous bound holds in the Weil--Petersson model. We conjecture that the appropriate analogue of~\eqref{eq:HY1.4} also holds in the random cover model, i.e., the optimal value of $\alpha$ in \eqref{eq:linfty} is $\tfrac{1}{2}-\varepsilon$. 
Note that for a degree $n$ cover $X_n$,  $\Vol(X_n) = 4 \pi n(g_0-1)$. Therefore we have the trivial lower bound
$$\|u_j\|_{L^\infty(X_n)} \gtrsim n^{-\frac{1}{2}} \|u_j\|_{L^2(X_n)}.$$

Using similar proof methods to Theorem~\ref{thm:eigenfn}, we also prove a polynomial estimate for the distribution of eigenvalues on random covers. Our result improves earlier work of Monk \cite{monk22}, who proved a logarithmic bound in the Weil--Petersson setting.

\begin{thm}\label{thm:cover}
Let $X$ be a compact connected orientable hyperbolic surface of genus $g_0\geq 2$. For any $\varepsilon>0$, there exist $\alpha=\alpha(g_0,\varepsilon)>0$ and $C=C(X)>0$ such that the following is true. Let $X_n$ be a degree $n$ cover of $X$ taken uniformly at random and  let $\lambda_j(X_n)$ be the $j$-th Laplacian eigenvalue on $X_n$. Then with probability at least $1-n^{-1/10}$, for every $\Lambda\in [1/4,\infty)$ and every $\lambda_j(X_n)\in [1/4,\Lambda]$, we have
     \begin{equation}\label{eq:compare}
         |\lambda_j(X_n)-\lambda_j|\leq C \Lambda^{1/2+\varepsilon} n^{-2\alpha/3},
     \end{equation}
     where $\lambda_j\geq 1/4$ is defined by \begin{equation*}\label{eq:lai}
        \int_{0}^{\sqrt{\lambda_j-1/4}} r\tanh\left(\pi  r\right) dr =\frac{j}{n(2g_0-2)}.
    \end{equation*}  
Moreover, we have the following Weyl law for $N_{X_n}(\Lambda):=\#\{j:\lambda_j(X_n)\leq \Lambda\}$:
    \begin{equation}\label{eq:weyl}
      N_{X_n}(\Lambda)=(2g_0-2)n\int_0^{\sqrt{\Lambda-1/4}} r \tanh(\pi r) dr + O_{X}(n^{1-\alpha}\Lambda^{1/2+\varepsilon}),\quad \Lambda\in [1/4,\infty).
    \end{equation}
\end{thm}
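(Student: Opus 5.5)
The plan is to deduce both estimates from a quantitative comparison between the spectral counting function of $X_n$ and that of the hyperbolic plane. This comparison will come from the Selberg pre-trace formula together with a bound on the expected number of closed geodesics whose length is polynomially large in $\log n$. Write $\Gamma$ for the fundamental group of $X$ and $\rho_n\colon\Gamma\to S_n$ for the random homomorphism that defines $X_n$.

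\textbf{Step 1: reduce to counting fixed points.} For an even test function $h$ whose Fourier transform $\hat h$ is supported in $[-L,L]$, the trace formula on $X_n$ gives
\[
\sum_j h(r_j)=(2g_0-2)n\int_0^\infty h(r)\,r\tanh(\pi r)\,dr+\sum_{[\gamma]\neq 1}\frac{\ell(\gamma_0)\,\mathrm{fix}(\rho_n(\gamma))}{2\sinh(\ell(\gamma)/2)}\hat h(\ell(\gamma)).
\]
The input I would take from the polynomial method is that for every nontrivial non-power $\gamma$ of word length $\le q$, with $q\le n^{c}$ for some small $c$, we have $\bbE[\mathrm{fix}(\rho_n(\gamma))]=O(1)$; more precisely it equals $1$ plus an error of size $q^{C}/n$. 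Stated differently, $\bbE\,\mathrm{fix}-1$ is a rational function in $1/n$ of degree $\lesssim q$, and Markov brothers' type inequalities control it uniformly.

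\textbf{Step 2: bound the error term in expectation, then in probability.} Subtract the contribution coming from the trivial representation. That contribution is exactly the trace formula for $X$ itself, which is $O_X(1)$ with no factor of $n$. The remaining geodesic sum then has expectation bounded by
\[
\sum_{\ell(\gamma)\le L}\frac{\ell\, q^C}{n}e^{-\ell/2}\,|\hat h(\ell)|\lesssim \frac{e^{L/2}L^C}{n}\,\|\hat h\|_\infty .
\]
Choose $L=c\log n$ with $c$ small. With this choice the error in expectation is $n^{1-\alpha_0}$ times the size of $h$, for some $\alpha_0>0$. Markov's inequality, together with a union bound over a discretization of $\Lambda$ and of the window centres (only polynomially many are needed), turns this into a statement holding with probability at least $1-n^{-1/10}$. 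For the variance, if it is needed, I would instead use second moments, which again reduce to fixed-point counts of pairs of words.

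\textbf{Step 3: Tauberian argument for the Weyl law.} Take $h$ to be a smoothing of $\mathbf 1_{[-R,R]}$ at scale $1/L\sim 1/\log n$. That scale is too coarse for a polynomial error, so instead I smooth at scale $n^{-\beta}$. This is permitted because $\hat h$ then decays beyond $n^{\beta}$, and the geodesics with $\ell>L$ are handled by the exponential decay of $\hat h$ for an analytic, Gaussian-type $h$. Alternatively, keep the scale $1/L$ and exploit the fact that in the window of width $1/L$ the main term is $\asymp n/L$. Then the Weyl error is that window contribution plus $n^{1-\alpha_0}$.

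This is the main obstacle. A smoothing at scale $1/\log n$ gives only a $\log$ improvement, exactly as in \cite{monk22}. To beat it I would use $h(r)=e^{-t(r^2+1/4)}$-type heat kernels and an $\bbE\,\mathrm{fix}$ bound that is valid up to lengths $L=n^{c}$; that is, I would apply the polynomial method to words of polynomial length, exploiting the factor $e^{-\ell^2/(4t)}$ in $\hat h$. Resolving the counting function polynomially then requires inverting the heat kernel, which costs a power of $\Lambda$; this accounts for the factor $\Lambda^{1/2+\varepsilon}$.

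\textbf{Step 4: eigenvalue comparison.} Let $N_0(\Lambda)=(2g_0-2)n\int_0^{\sqrt{\Lambda-1/4}}r\tanh(\pi r)\,dr$. Since $N_{X_n}(\lambda_j(X_n))\ge j=N_0(\lambda_j)$, the Weyl law gives $|N_0(\lambda_j(X_n))-N_0(\lambda_j)|\lesssim n^{1-\alpha}\Lambda^{1/2+\varepsilon}$. The derivative of $N_0$ is $\asymp n$ for $\Lambda\ge 1$. It degenerates near $1/4$, where $N_0(\Lambda)\sim n(\Lambda-1/4)^{3/2}$, and inverting that behaviour produces the exponent $2\alpha/3$ in \eqref{eq:compare}.
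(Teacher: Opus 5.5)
There is a genuine gap, and it is exactly at the point you flag as the main obstacle.

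In Step~2 you bound the geodesic side term by term. A per-word estimate for $\mathbb{E}[\mathrm{fix}(\rho_n(\gamma))]-1$ is multiplied by $\ell/(2\sinh(\ell/2))$ and summed in absolute value over all closed geodesics of length $\le L$. That sum grows like $e^{L/2}$. This forces $L\lesssim\log n$ and hence only logarithmic spectral resolution.

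Your Step~3 remedy does not remove this loss. For a Gaussian or heat-kernel $h$ resolving the spectrum at scale $n^{-\beta}$, $\hat h(\ell)$ carries the factor $e^{-\ell^2/(4t)}$ with $t\sim n^{2\beta}$. That factor only beats the $e^{\ell/2}$ growth of the weighted geodesic count once $\ell\gtrsim t$. The absolute geodesic sum is then of size $e^{ct}$, and no per-word gain of $q^C/n$ can compensate. Recovering $N_{X_n}(\Lambda)$ from heat traces is also an inverse Laplace transform, which is exponentially ill-conditioned rather than costing a power of $\Lambda$. In the paper, the factor $\Lambda^{1/2+\varepsilon}$ simply comes from interpolating a bound $Cn^{1-\alpha}\Lambda^2$ with the deterministic remainder $Cn\Lambda^{1/2}$ of \cite{MrSt14}.

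Separately, Markov's inequality applied to a small expectation of a \emph{signed} quantity gives nothing. The quantity $\mathrm{fix}(\rho_n(\gamma))-1$ has fluctuations of order one, so second moments are not optional. You need rational approximations of $\mathbb{E}[\mathrm{tr}\,\rho(\gamma_1)\,\mathrm{tr}\,\rho(\gamma_2)]$ as in Lemma~\ref{lem:fixed_point_estimte}, followed by Chebyshev's inequality.

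The missing idea is to run the polynomial method on the spectral statistic itself rather than on individual words. The paper uses $\hat\phi=h\circ f_\Lambda$, where $f_\Lambda\ge 0$ has $\check f_\Lambda$ supported in $[-c_0\Lambda^{-1/2},c_0\Lambda^{-1/2}]$, and $h(x)=x\tilde h(x)$ is a polynomial of degree $q$. Then $(h\circ f_\Lambda)^\vee$ only sees geodesics of length $\lesssim q$. So the second moment of $\frac1n\mathrm{tr}\,(h\circ f_\Lambda)(\sqrt{\Delta-1/4})$ minus its Weyl term is a rational function of degree $O(q)$ in $1/n$, up to an error $(Cq)^{Cq}n^{-q}$.

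The a priori bound comes from the spectral side. One has $|h\circ f_\Lambda|\le\|\tilde h\|f_\Lambda$, and for covers $\frac1n\mathrm{tr}\,f_\Lambda(\sqrt{\Delta-1/4})=O(\Lambda)$, because its geodesic side only involves loops shorter than the Margulis constant (Lemma~\ref{lem:triv_bdd}). This bound is uniform in $q$, with no $e^{L}$ loss. Since the rational function vanishes at $1/n=0$, Markov brothers' inequality turns this into $O(q^{C}\Lambda^2\|\tilde h\|^2/n)$. Only a power of the degree is lost, so $q$, and hence the geodesic length, may be as large as a power of $n$.

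A Chebyshev expansion transfers this bound to smooth $\tilde h$ at the cost of finitely many derivatives. A smooth step of width $n^{-\alpha_0}$ in the $f_\Lambda$ variable has $C^k$ norm $\lesssim n^{k\alpha_0}$. Chebyshev's inequality plus a union bound over a discretization of $\Lambda$ then gives the Weyl law with polynomial error. Your Step~4 is fine once the Weyl law is in hand: you invert $N_0$, and the exponent $2/3$ comes from $N_0(\Lambda)\asymp n(\Lambda-1/4)^{3/2}$ near $1/4$.
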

Since the $\lambda_j$'s are evenly distributed in $[1/4,\infty)$ according to the spectral measure of the hyperbolic plane $\bbH$, \eqref{eq:compare} shows that the eigenvalues $\lambda_j(X_n)$ are also evenly distributed up to an $O(n^{-2\alpha/3}\Lambda^{1/2+\varepsilon})$-oscillation. In particular, the multiplicity of eigenvalues is bounded by $Cn^{1-\alpha}\Lambda^{1/2+\varepsilon}$ for a typical random cover, see \cite{GLMST,monk22,LeMa,he2025short} for related results.

We note that Hide, Macera, and Thomas \cite[Theorem 1.1]{HMT25a} showed that there exists $b, c > 0$ depending only
on the genus of $X$ such that a uniformly random degree $n$ cover $X_n$ of $X$ has
\begin{equation*}\label{eq:first_eigenvalue_cover}
\lambda_1^{\operatorname{new}}(X_n) \geq \frac{1}{4} - cn^{-b}, 
\end{equation*}
with probability tending to $1$ as $n \rightarrow \infty$. Here, $\lambda_1^{\operatorname{new}}(X_n)$ denotes the smallest eigenvalue of $X_n$ that is not an eigenvalue of $X$, accounting for multiplicities. Moreover, from \eqref{eq:weyl}, it is easy to see that there are at most $Cn^{1-\alpha}$ eigenvalues of $\Delta_{X_n}$ below $1/4$. Therefore, we conclude the following.
\begin{cor}
The new eigenvalues of $X_n$ below $1/4$ also satisfy \eqref{eq:compare} with $2\alpha/3$ replaced by $\min(2\alpha/3,b)$ and probability tending to $1$ as $n\to \infty$. 
\end{cor}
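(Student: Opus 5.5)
The plan is to combine three inputs: the Weyl law \eqref{eq:weyl} with $\Lambda=1/4$, the Hide--Macera--Thomas bound on $\lambda_1^{\operatorname{new}}(X_n)$, and the comparison \eqref{eq:compare} for eigenvalues in $[1/4,\Lambda]$. Work on the intersection of two events: the event of Theorem~\ref{thm:cover}, which has probability at least $1-n^{-1/10}$, and the event $\lambda_1^{\operatorname{new}}(X_n)\ge \frac14-cn^{-b}$, which has probability $1-o(1)$ by \cite[Theorem 1.1]{HMT25a}. Their intersection therefore has probability tending to $1$.

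\textbf{Counting small eigenvalues.} Evaluating \eqref{eq:weyl} at $\Lambda=1/4$ makes the integral vanish, so
\[
N_{X_n}(1/4)\le Cn^{1-\alpha}.
\]
The eigenvalues of $X$ below $1/4$ lift to $X_n$ and are finitely many, say $m_0=m_0(X)$ of them counted with multiplicity. So the indices $j$ with $\lambda_j(X_n)<1/4$ fill an initial segment $j\le J$ with $J\le Cn^{1-\alpha}$. Among them, every index $j>m_0$ is a new eigenvalue, and for these $\frac14-cn^{-b}\le \lambda_j(X_n)<\frac14$.

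\textbf{Comparison for new eigenvalues below $1/4$.} Fix such a $j$, with $m_0<j\le J$. Since $\lambda_j\ge 1/4$, we have $|\lambda_j(X_n)-\lambda_j|\le cn^{-b}+(\lambda_j-\frac14)$. For $\lambda_j$ we use its defining equation. As $r\tanh(\pi r)\ge c' r^2$ for small $r$, the integral is at least $c'' s^3$ with $s=\sqrt{\lambda_j-1/4}$. Hence
\[
s^3\lesssim \frac{j}{n}\le Cn^{-\alpha},
\]
which gives $\lambda_j-\frac14=s^2\lesssim n^{-2\alpha/3}$; this is where the exponent $2\alpha/3$ comes from. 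Combining the two pieces, $|\lambda_j(X_n)-\lambda_j|\lesssim n^{-\min(2\alpha/3,b)}\le C\Lambda^{1/2+\varepsilon}n^{-\min(2\alpha/3,b)}$, since $\Lambda\ge 1/4$.

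\textbf{The remaining eigenvalues and the main obstacle.} Eigenvalues in $[1/4,\Lambda]$ are already covered by \eqref{eq:compare}, and replacing $2\alpha/3$ by the smaller exponent only weakens that bound. The one step needing care is the indexing convention. The old eigenvalues $\lambda_j(X_n)$ with $j\le m_0$ lie below $1/4$ by a fixed amount, so they are excluded by the word ``new''. Their presence shifts the indices by at most $m_0=O(1)$, which changes $\lambda_j$ by a negligible amount: near $1/4$ this is $O(n^{-2/3})$ by the same cubic computation, and away from $1/4$ it is $O(\Lambda^{\varepsilon}/n)$. With that checked, the argument is elementary.
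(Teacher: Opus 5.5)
Your proposal is correct and follows the paper's argument. You intersect the event of Theorem~\ref{thm:cover} with the Hide--Macera--Thomas event, apply \eqref{eq:weyl} at $\Lambda=1/4$ to get at most $Cn^{1-\alpha}$ eigenvalues below $1/4$, and use the cubic behaviour of $\int_0^s r\tanh(\pi r)\,dr$ near $0$ to get $\lambda_j-\tfrac14\lesssim n^{-2\alpha/3}$; you spell this last step out in more detail than the paper's one-line justification does.

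Your closing discussion of index shifts by $m_0$ is harmless but unnecessary. For any new eigenvalue below $1/4$, the bound follows directly from $j\le N_{X_n}(1/4)$ and $\lambda_j(X_n)\ge \tfrac14-cn^{-b}$, whichever indices the old eigenvalues occupy.
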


We note that Theorem~\ref{thm:cover} does not describe how the eigenvalues oscillate at finer scales as in \cite{Rudnick23,Naud25}. 
Rudnick and Naud showed that in the Weil--Petersson model and random cover model, respectively, the number variance of the eigenvalues in the large genus and small window limit converges to that of the GOE eigenvalues. 
With the polynomial method, one can take the size of the window $L^{-1}$ to depend on the degree of the cover: $L=n^{\alpha}$.
Indeed, this idea is reminiscent of the proof of Theorem~\ref{thm:eigenfn}, in which there is a term \eqref{eq:loc-osci} that describes the local oscillation of eigenfunctions away from the local Weyl law.

\subsection{Framework for general random hyperbolic surfaces}
As mentioned above, compact random covers  are not the only way to study the large genus limit of random hyperbolic surfaces. As we will explain in \S\ref{subsection:previous_work}, there has been much recent progress in the spectral theory of Weil--Petersson random surfaces and Brooks--Makover surfaces.

In this subsection, we generalize bounds of the form~\eqref{eq:linfty} and ~\eqref{eq:weyl} (from Theorem~\ref{thm:eigenfn} and Theorem~\ref{thm:cover}, respectively) to other random models beyond random covers. The proof of such bounds follows from
bounding the expectations that appear in~\eqref{eq:expan-g-1} and~\eqref{eq:eigenvalue_condition}. 
To bound these expectations, we use the polynomial method pioneered by \cite{CGVTvH24} and generalized by \cite{MPvH25}. This argument crucially relies on polynomial approximations of our expectations. The proofs of these polynomial approximations depend on the specifics of the choice of random model. This is in contrast to the rest of the argument: the theorems in this subsection show that assuming a random model has the appropriate  polynomial approximations, we conclude results of the form Theorem~\ref{thm:eigenfn} and Theorem~\ref{thm:cover}.

In this paper, we first prove the general theorems in this subsection, then show that Theorem~\ref{thm:eigenfn} and Theorem~\ref{thm:cover} follow as corollaries. 
In a forthcoming paper,  we plan to adapt the ideas of Theorem~\ref{thm:eigenfn} and Theorem~\ref{thm:cover} to the Weil--Petersson model.

Now we introduce some notation. First, let $dz \coloneqq y^{-2}dxdy$ be the hyperbolic volume form on $\bbH$.
For a continuous function $F: (0,\infty) \to \mathbb{R}$, let
\begin{equation*}
    \|F\|_{I}:=\sup_{x\in I}|F(x)|.
\end{equation*}
For a hyperbolic surface $X$, a \emph{geodesic loop} $\gamma=\gamma(z)$ based at $z\in X$ is a geodesic segment with both start and end at $z$. It gives a representative of a nontrivial element in $\pi_1(X,z)$. We denote its length by $\ell_{\gamma}(X,z)$. Similarly, we denote the length of a closed geodesic $\gamma$ on $X$ by $\ell_{\gamma}(X)$. Every closed oriented geodesic $\gamma$ in $X$ determines a nontrivial conjugacy class $[\tilde{\gamma}] \subset \Gamma \setminus \{\id\}$. By abuse of notation, we use $\gamma$ to denote both geodesic loops and closed geodesics  and their associated elements of $\pi_1(X, z)$ and $\pi_1(X)$.

We define $\injrad_z(X) \coloneqq \sup \{r \geq 0 :  B_r(z) \subset X \text{ is isometric to a ball in } \bbH\}$ to be the injectivity radius at a point $z \in X$. As mentioned above, we define $\injrad(X) \coloneqq \inf\{\injrad_z(X): z \in X\}$ to be the injectivity radius of $X$. 
For $\delta_0 \geq 0$ define
$$X_{\delta_0} \coloneqq X \setminus \{z \in X: \injrad_z(X) < \delta_0\}.$$

The first theorem we present in this subsection shows under which general conditions we achieve polynomial bounds on eigenfunctions. In the case where our family of random hyperbolic surfaces is  random covers of a base surface $X$ of genus $g_0$, set:
\begin{itemize}
    \item $g =n(g_0-1) +1$, where  $n$ is the degree of the cover;
    \item  $\delta_0 = \min\{\injrad(X)/2, \mathrm{arcsinh}(1)/4\}$.
\end{itemize}
\begin{thm}\label{thm:eigen-g}
    Suppose we have a random family of compact oriented (not necessarily connected) hyperbolic surfaces $Y = \Gamma \backslash \bbH$ of area $2\pi(2g-2)$. Suppose there is an asymptotic parameter $n=n(g)\to \infty$ as $g \rightarrow \infty$ such that the following holds for some $C_0>2$, $\kappa>2$, and $0<\delta_0<\frac{1}{2}\mathrm{arcsinh}(1) $:
    \begin{enumerate}
     \item \emph{$n$ and $g$ are proportional:}
     \begin{equation}\label{g-n-1}
    C_0^{-1} n \leq g \leq C_0 n.
     \end{equation}
        \item \emph{Off-diagonal polynomial approximation:} Define
        \begin{equation}\label{eq:ttilde-gamma-def}
    \widetilde{\Gamma}^4:=\left(\Gamma\setminus \{\id\}\right)^4\setminus \left\{(\gamma^{k_1},\gamma^{k_2},\gamma^{k_3},\gamma^{k_4}):\gamma\in  \Gamma\setminus \{\id\},\,\, k_1,k_2,k_3,k_4\in \bbZ\setminus\{0\}\right\}.
\end{equation}
        For  $L\geq 1$ and continuous functions $F_1,\ldots,F_8:[0,\infty)\to \mathbb{R}$ with $\supp F_\ell \subset [0, L]$,  there exist $p,p_1\leq C_0L$ and $f_j\in \mathbb{R}$ independent of $n$ such that
        \begin{equation}\label{eq:expan-g-1}
            \begin{split}
            &\left|\mathbb{E}\left[\int_{Y_{\delta_0}}  \sum_{(\gamma_1, \ldots, \gamma_8) \in \left(\tilde{\Gamma}^4\right)^2}   F_1(\ell_{\gamma_1}(Y,z))\cdots F_8(\ell_{\gamma_8}(Y,z)) dz\right] -\frac{1}{Q_L(1/n)}\sum_{j=1}^{p_1} f_j n^{-j} \right| \\
            &\leq (C_0 p)^{\kappa p}n^{-p-1}\prod_{\ell=1}^{8}\|F_{\ell}\|_{(0,L]},\quad  n\geq (C_0 p)^{\kappa},
            \end{split}
        \end{equation}
        where $Q_L(1/n)\in [C_0^{-1},C_0]$ for $n\geq (C_0 p)^{\kappa}$.
    \end{enumerate}
    Then there exist $\alpha=\frac{1}{32(2\kappa+1)}>0$ and $C=C(\kappa,C_0)>0$ such that with probability at least $1-n^{-1/10}$, for every $\Lambda\geq 1/4$ and every $\lambda_j(Y)\leq \Lambda$, we have
    \begin{equation}\label{eq:general-linfty}
        \|u_j\|_{L^\infty(Y_{2\delta_0})}\leq C\Lambda^{2} \delta_0^{-1/2} n^{-\alpha}\|u_j\|_{L^2(Y)}.
    \end{equation}
\end{thm}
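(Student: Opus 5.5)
Our plan is the standard pre-trace-formula route to sup-norm bounds, with the randomness handled by a fourth-moment estimate. The moment estimate in turn comes from the polynomial method applied to hypothesis \eqref{eq:expan-g-1}.

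\emph{Step 1: reduction to a kernel bound.} Fix an even test function $h$ whose Selberg transform $k$ (a radial kernel on $\bbH$) is supported in $[0,L]$, with $L = c\log n$ or $L=n^{\beta}$ to be chosen. We ask that $h \ge 0$ on the spectrum and $h(r_j)\gtrsim \Lambda^{-O(1)}$ whenever $\lambda_j = \frac14 + r_j^2 \le \Lambda$. One takes $h = |\hat\phi|^2$-type functions, so positivity also covers the exceptional eigenvalues $r_j\in i[0,\frac12]$. The pre-trace formula gives, for $z\in Y$,
\[
\sum_j h(r_j)|u_j(z)|^2 = \sum_{\gamma\in\Gamma} k(d(z,\gamma z)) = k(0) + \sum_{\gamma\neq \id} k(\ell_\gamma(Y,z)).
\]
The identity term is $O(1)$, and it is not small. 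To get decay we therefore localize in spectrum more cleverly: subtract the local Weyl law, which is the term \eqref{eq:loc-osci} alluded to earlier. Concretely, we pick $h$ concentrated in a window of width $L^{-1}$ around $r$, so that $k(0)\approx L^{-1}\cdot(\text{density})$. Then
\[
|u_j(z)|^2 \lesssim \Lambda^{O(1)}\Bigl(L^{-1} + \Bigl|\sum_{\gamma\ne\id} k(\ell_\gamma(Y,z))\Bigr|\Bigr).
\]
Next we split the sum over $\gamma\neq\id$. Powers of a single primitive element are controlled deterministically on $Y_{\delta_0}$ by counting in the thick part, giving $O(e^{L/2}\delta_0^{-1})$-type contributions. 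We prefer instead to fold them into the exclusion set $\widetilde\Gamma^4$, as follows. Write $S(z)$ for the remaining sum, raise to the eighth power, and expand. The tuples where all $\gamma_i$ are powers of one element are treated by a separate deterministic or lower-moment bound. What remains is exactly the quantity in \eqref{eq:expan-g-1}, after symmetrizing into two blocks of four.

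\emph{Step 2: polynomial method.} Let $\Phi(1/n)$ be the expectation in \eqref{eq:expan-g-1}. By hypothesis, $\Phi(1/n)$ agrees with the rational function $P(1/n)/Q_L(1/n)$ up to $(C_0p)^{\kappa p}n^{-p-1}$ for $n\ge (C_0p)^\kappa$, with $p\le C_0L$. Following \cite{CGVTvH24,MPvH25}, we apply a Markov brothers inequality on the grid $\{1/n\}$ to bound the derivatives of $P$. This shows that the leading coefficients are controlled by $\Phi$ at moderate $n$, where it is trivially bounded by $e^{O(L)}\prod\|F_\ell\|$. The key structural input is that the $n^0$ coefficient vanishes, since $P$ starts at $j=1$. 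Hence $\Phi(1/n) \lesssim n^{-1}(C_0 L)^{O(\kappa)} e^{O(L)}\prod\|F_\ell\|$, valid for $n\ge L^{2\kappa+1}$ roughly.

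\emph{Step 3: optimization, union bound, and the main obstacle.} With $L = n^{\beta}$ we must keep $e^{O(L)}$ under control. This forces us to use $F_\ell$ with sup norm $\lesssim e^{-\ell/2}$-weighted kernels, so the count of loops of length $\le L$ (about $e^{L}$) is compensated by the decay of $k$. I expect this balance to be the main obstacle: the kernel $k$ for a window of width $L^{-1}$ decays only like $e^{-t/2}$ times polynomial factors, and geodesic loop counts grow like $e^{t}$. One must therefore genuinely exploit the cancellation encoded by the vanishing constant term, and not just apply absolute values.

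Given this, Markov's inequality yields the bound
\[
\mathbb P\bigl(\textstyle\int_{Y_{\delta_0}} S^8 > n^{-1+8\eta}\bigr)\le n^{-8\eta}\cdot(\text{poly }L).
\]
A Sobolev/mean-value argument on balls of radius $\delta_0$ then converts the integral bound into a pointwise bound on $Y_{2\delta_0}$, costing $\delta_0^{-1/2}$ and $\Lambda^{O(1)}$. It remains to take a union bound over a dyadic net of $\Lambda$ and window centers; since these are polynomially many, this is affordable. Balancing $L^{-1}$ against $n^{-1/8}L^{O(\kappa)}$ gives $\alpha = 1/(32(2\kappa+1))$ and probability $1-n^{-1/10}$.
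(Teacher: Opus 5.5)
There is a genuine gap, and it is the step you call the ``main obstacle'' and then assume away (``Given this, \ldots''). To use the Markov brothers' inequality you need an a priori bound on the expectation $\Phi$ at \emph{all} moderate $n\geq (C_0p)^{\kappa}$ with $p\sim L$, and the output is only as good as that bound. For a generic test function whose kernel is supported in $[0,L]$, both sides of the pre-trace formula are only controlled by $e^{O(L)}$:
\begin{itemize}
\item geometrically, because there are $\sim e^{t}$ loops of length $t$ against $|k(t)|\lesssim e^{-t/2}$;
\item spectrally, because $\hat\phi$ can be exponentially large in $L$ on $i[0,\tfrac12]$; already the trivial eigenvalue contributes $\hat\phi(i/2)/\Vol(Y)$.
\end{itemize}
So your Step 2 really gives $\Phi\lesssim n^{-1}L^{O(\kappa)}e^{O(L)}$. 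This forces $L\le c\log n$, hence a spectral window of width $\sim(\log n)^{-1}$, hence only a logarithmic gain of the type in \cite{GLMST}; with $L=n^{\beta}$ the bound is vacuous. The vanishing $n^{0}$ coefficient does not rescue this. It only turns a bound for the polynomial on the grid into an extra factor $1/n$; it does not supply that bound. Your final balancing of $L^{-1}$ against $n^{-1/8}L^{O(\kappa)}$ is therefore unjustified.

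The missing idea is the paper's choice of test function $\hat\phi=h\circ f_{\Lambda_0}$. Here $f\geq 0$ on $\bbR\cup i\bbR$, $\check f_{\Lambda_0}$ is supported in $[-c_0\Lambda_0^{-1/2},c_0\Lambda_0^{-1/2}]$ (below the Margulis constant), and $h(x)=x\tilde h(x)$ is a polynomial of degree $q$. The kernel is then supported in $[0,qc_0\Lambda_0^{-1/2}]$, so \eqref{eq:expan-g-1} applies with $p\lesssim q$. At the same time $|h\circ f_{\Lambda_0}|\leq \|\tilde h\|\,f_{\Lambda_0}$ on the whole spectrum, exceptional eigenvalues included. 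Moreover, the pre-trace formula for $f_{\Lambda_0}$ alone sees only powers of the one possible short loop. This yields the $q$-independent bound $\sup_{z\in Y_{\delta_0}}\bigl|\sum_j h\circ f_{\Lambda_0}(\sqrt{\lambda_j-1/4})|u_j(z)|^2\bigr|\lesssim \Lambda_0^{3/2}\delta_0^{-1}\|\tilde h\|$ (Lemma~\ref{lem:unif-fn}), so Markov costs only $q^{4\kappa}$. The narrow window, of width $\asymp(1+\Lambda)n^{-\alpha_0}$ in $\lambda$, is not obtained from one compactly supported kernel. Instead a smooth $\tilde h_\Lambda$ is expanded in Chebyshev polynomials (Corollary~\ref{cor:expect-fn-prop}), at cost $\|\tilde h_\Lambda\|_{C^{2\kappa+1}}^8\lesssim n^{8(2\kappa+1)\alpha_0}=n^{1/2}$.

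Two smaller points. First, removing only the $8$-tuples that are all powers of one element does not leave $(\widetilde\Gamma^4)^2$. Tuples with one diagonal block of four remain, and the hypothesis says nothing about them. The paper instead splits the fourth power as $V=S+(V-S)$, so that $(V-S)^2$ is exactly the sum over $(\widetilde\Gamma^4)^2$ and $V^2\leq 2S^2+2(V-S)^2$. Second, the diagonal part must be shown small, not merely bounded by $e^{L/2}\delta_0^{-1}$. The paper bounds $S$ deterministically, using Lemma~\ref{lem:unif-K} and \eqref{eq:int-bound}, which give $|k(t)|\lesssim \Lambda_0^{3/2}n^{-\alpha_0}e^{-t/2}$ for the narrow window. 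Because of the fourth power, the sum over primitive $\gamma$ then converges by Lemma~\ref{lem:geodesic_loop_bound}, independently of the kernel's support length.
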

Note the dependence on $\delta_0$ in~\eqref{eq:general-linfty} mirrors the dependence on the injectivity radius in~\eqref{eq:GLMST-bound}.

Similarly, we have the following theorem for polynomial bounds on eigenvalues. 
\begin{thm}\label{thm:lambda-g}
    Suppose we have a random family of compact oriented (not necessarily connected) hyperbolic surfaces $Y = \Gamma \backslash \bbH$ of area $2\pi(2g-2)$. Suppose that there is an asymptotic parameter $n=n(g) \to \infty$ as $g \rightarrow \infty$ such that the following holds with $C_0>2$ and $\kappa>2$:

 \begin{enumerate}
    \item \emph{$n$ and $g$ are proportional:}  
    \begin{equation}\label{eq:g-n-2}
    C_0^{-1} n \leq g \leq C_0 n.
    \end{equation}
    \item \emph{Injectivity radius bound:} Let $\injrad_1(Y):=\min\{1,\injrad(Y)\}$. We have  \begin{equation}\label{eq:sys-2}
        \mathbb{E}[|\log\injrad_1(Y)|^2]<C_0.
    \end{equation}
 \item  \emph{Polynomial approximation:} For  $L\geq 1$ and continuous function $F:(0,\infty)\to \mathbb{R}$ with $\supp F \subset [0, L]$,  there exist $p,p_1\leq C_0 L$ and $f_j\in \mathbb{R}$ independent of $n$ such that
        \begin{equation}\label{eq:eigenvalue_condition}
            \begin{split}
            &\left|\mathbb{E}\left[\sum_{\gamma_1, \gamma_2 \in \calP(Y)} \sum_{k_1, k_2 =1}^\infty \frac{\ell_{\gamma_1}(Y)}{2\sinh \frac{k_1\ell_{\gamma_1}(Y)}{2}}\frac{\ell_{\gamma_2}(Y)}{2\sinh \frac{k_{2}\ell_{\gamma_2}(Y)}{2}}F(k_{1}\ell_{\gamma_1}(Y)) F(k_{2}\ell_{\gamma_2}(Y))\right]-\frac{1}{Q_L(1/n)}\sum_{j=0}^{p_1} f_j n^{-j}\right|\\
            &\leq (C_0p)^{\kappa p} n^{-p}\|F\|_{(0,L]}^2 \quad n\geq (C_0p)^\kappa,
            \end{split}
        \end{equation}
        where $\calP(Y)$ denotes the set of primitive closed geodesics on $Y$ and $Q_L(1/n)\in [C_0^{-1},C_0]$ for $n\geq (C_0p)^{\kappa}$.
\end{enumerate}
Then there exist $\alpha=\frac{1}{4(\kappa+1)}>0$ and $C=C(\kappa,C_0)>0$ such that with probability at least $1-n^{-1/10}$, for $N_Y(\Lambda):=\#\{\lambda_j(Y)\leq \Lambda\}$, $\Lambda \geq 1/4$, we have
\begin{equation}\label{eq:weyl-g}
    \left|N_Y(\Lambda)-\frac{\Vol(Y)}{2\pi} \int_0^{\sqrt{\Lambda-1/4}} r\tanh (\pi r) dr\right|\leq C n^{1-\alpha} \Lambda^2 , \quad \Lambda\in [1/4,\infty).
\end{equation}
\end{thm}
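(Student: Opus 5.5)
We use the Selberg trace formula with a test function whose Fourier transform is supported in $[-L,L]$, and take $L=c\log n$ (more precisely $L\sim n^{\beta}$ is not allowed, so $L=c\log n$ with $p\sim L$). Fix a smooth even $\hat h$ supported in $[-1,1]$ and set $h_L(r)=\hat h$-rescaled so that $\hat h_L(t)=\hat h(t/L)$. The trace formula gives
\begin{equation*}
\sum_j h_L(r_j)=\frac{\Vol(Y)}{4\pi}\int_{\bbR} h_L(r)\,r\tanh(\pi r)\,dr+\sum_{\gamma\in\calP(Y)}\sum_{k\ge1}\frac{\ell_\gamma(Y)}{2\sinh\frac{k\ell_\gamma(Y)}{2}}\,\hat h_L(k\ell_\gamma(Y)).
\end{equation*}
Call the geometric sum $G_L(Y)$. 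The plan is to show that $\mathbb{E}[G_L(Y)^2]$ is polynomially small in $n$ relative to $n^2$, deduce via Chebyshev that $|G_L|\le n^{1-2\alpha}$ with probability $1-n^{-1/10}$, and then convert the smoothed counting statement into the sharp count \eqref{eq:weyl-g} by a standard Tauberian/Beurling--Selberg majorant argument at spectral scale $1/L$, which costs an error $\Vol(Y)\cdot L^{-1}\Lambda^{1/2}$ plus $\Lambda^{O(1)}$ factors from summing over windows; this only yields logarithmic gains, so instead we must take $L=n^{\beta}$, and the polynomial method is what permits this.

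Key step: the second moment of $G_L$ is exactly the expectation in \eqref{eq:eigenvalue_condition} with $F=\hat h_L$, so $\mathbb{E}[G_L^2]$ is approximated to error $(C_0p)^{\kappa p}n^{-p}$ by a rational expression $Q_L(1/n)^{-1}\sum_{j\le p_1} f_jn^{-j}$, valid for $n\ge (C_0p)^\kappa$. Following \cite{CGVTvH24,MPvH25}, I would control the coefficients $f_j$ by exploiting a crude a priori bound for all $n$: the geometric side is bounded by $e^{CL}$ times counting of geodesics (prime geodesic bound in terms of $\Vol(Y)$ and $\injrad_1(Y)$, the latter controlled in $L^2$ by \eqref{eq:sys-2}), giving $\mathbb{E}[G_L^2]\lesssim n^2 e^{CL}$. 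Then the polynomial $P(x)=\sum f_jx^j$ of degree $p_1\le C_0L$ is bounded on the grid $x=1/n$, $n\ge (C_0p)^\kappa$, and Markov-brothers type inequalities (the key lemma of the polynomial method: bounded on a fine enough discrete set implies bounded on the interval, with derivative bounds $\lesssim p^{2j}$) give $|f_j|\lesssim e^{CL}(C p)^{O(j)}$. Crucially $f_0$ and $f_1$ vanish or are of order $n^{2}\cdot$(small) — more precisely the leading behaviour must be identified: the $n^2$-scale contribution disappears because the mean of $G_L$ is $O(n\cdot$ small$)$ and the first coefficients are bounded independently of $L$ in a way that the polynomial method transfers. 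Combining, $\mathbb{E}[G_L^2]\lesssim e^{CL}(Cp)^{O(1)}+ (C_0p)^{\kappa p}n^{-p}$, polynomial in $n$ once $L=c\log n$... and the balancing of $L$ against $(C_0p)^\kappa\le n$ forces $p\lesssim n^{1/\kappa}$, which is where $\alpha=\frac1{4(\kappa+1)}$ comes from: choose $L\approx n^{1/(\kappa+1)}$-type scaling with the Fourier support and a rescaled test function concentrating the error.

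The main obstacle is the coefficient extraction: showing that the polynomial approximation plus an a priori bound valid for all $n$ forces $\mathbb{E}[G_L^2]=O(L^{O(1)})$ (i.e., no $n$-growth), uniformly with only polynomial loss in $L$. I would first try the Chebyshev-polynomial interpolation lemma of \cite{MPvH25} on $[0,(C_0p)^{-\kappa}]$, treating $1/Q_L$ as a bounded analytic factor. Once done, Chebyshev's inequality over a dyadic family of $\Lambda$ (union bound, $O(\log n)$ events, adjusting exponents to keep probability $1-n^{-1/10}$) and the smoothing-to-sharp comparison with window $1/L$ give the error $n\cdot L^{-1}\Lambda^{2}\lesssim n^{1-\alpha}\Lambda^2$.
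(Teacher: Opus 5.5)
Your proposal breaks down at the a priori bound, and this is not a technicality. With $\hat h_L(t)=\hat h(t/L)$, every available bound is exponential in $L$. On the geometric side you yourself get $e^{CL}$ from counting geodesics. On the spectral side the trivial eigenvalue $\lambda_0=0$ (i.e.\ $r_0=i/2$) contributes $h_L(i/2)=\frac{1}{2\pi}\int\hat h(t/L)\cosh(t/2)\,dt$, which is of size $e^{cL}$ (e.g.\ for $\hat h\ge 0$), and every other eigenvalue below $1/4$ also contributes a term exponential in $L$. So for $L=n^{\beta}$ the deviation $G_L$ is itself exponentially large, and no argument can give $|G_L|\le n^{1-2\alpha}$.

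The polynomial method cannot rescue this. Markov's inequality converts an a priori bound $B$ on the grid $\{1/n\}$ into a bound of order $B\,q^{O(\kappa)}/n$. It therefore yields polynomial decay only if $B$ is at most polynomial in the Fourier-support length. Your coefficient bounds $|f_j|\lesssim e^{CL}(Cp)^{O(j)}$ are useless once $L$ is a power of $n$, and with $L=c\log n$ you only get logarithmic gains. This is exactly the dilemma you notice in your first paragraph but never resolve. The same obstruction defeats your smoothing-to-sharp step: $N_Y(\Lambda)$ counts the eigenvalues below $1/4$, and a Beurling--Selberg majorant or minorant of exponential type $L$ is uncontrolled on $r\in i(0,1/2]$. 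Incidentally, no ``identification of the leading behaviour'' is needed. The hypothesis already expands $\mathbb{E}[G^2]$ in nonnegative powers of $1/n$, so $n^{-2}\mathbb{E}[G^2]$ is $x^2P(x)/Q_L(x)$ at $x=1/n$, which vanishes at $x=0$.

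The missing idea is to take the test function of the form $h\circ f_\Lambda$. Here $f_\Lambda(x)=f(c_0\Lambda^{-1/2}x)$ is one fixed function, nonnegative on $\bbR\cup i\bbR$, with $\check f_\Lambda$ supported in $[-c_0\Lambda^{-1/2},c_0\Lambda^{-1/2}]$, and $h(x)=x\tilde h(x)$ is a polynomial of degree $q$; the degree $q$ replaces your $L$. Since $|h\circ f_\Lambda|\le\|\tilde h\|\,f_\Lambda$ on the entire spectrum (eigenvalues below $1/4$ are mapped into $[f(0),f(i/2)]$), one gets $\frac1n|\tr (h\circ f_\Lambda)(\sqrt{\Delta_Y-1/4})|\lesssim\Lambda(1+|\log\injrad_1(Y)|)\|\tilde h\|$. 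This holds because $\tr f_\Lambda(\sqrt{\Delta_Y-1/4})$ only sees closed geodesics of length $\le c_0\Lambda^{-1/2}$, of which there are at most $3g-3$. This is where \eqref{eq:sys-2} enters, and the bound is independent of $q$. Markov applied to $x^2P(x)$ then yields $\mathbb{E}[(G/n)^2]\lesssim q^{2\kappa}\Lambda^2\|\tilde h\|^2/n$.

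For the sharp count, take a smooth $\tilde h_\Lambda$ such that $h_\Lambda\circ f_\Lambda$ equals $1$ on all eigenvalues $\le\Lambda$ (including those below $1/4$) and $0$ above $\Lambda(1+n^{-\alpha})$, with $\|\tilde h_\Lambda\|_{C^j}\lesssim n^{j\alpha}$. Expanding $\tilde h_\Lambda$ in Chebyshev polynomials turns $q^{2\kappa}$ into $\|\tilde h_\Lambda\|_{C^{\kappa+1}}^2\lesssim n^{2(\kappa+1)\alpha}=n^{1/2}$. This, not a balance of $L$ against $(C_0p)^\kappa\le n$, is the source of $\alpha=\frac{1}{4(\kappa+1)}$. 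Finally, covering all $\Lambda\in[1/4,\infty)$ requires a grid of about $n^{\alpha}$ points in each dyadic block, with failure probability $\lesssim\Lambda^{-2}n^{-1/4}$ per point so that the union bound converges. $O(\log n)$ events do not suffice.
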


We remark that the naive analogous condition to~\eqref{eq:sys-2} for Theorem~\ref{thm:eigen-g} to hold is a polynomial, rather than logarithmic, bound on the injectivity radius.  As such a condition does not hold in the Weil--Petersson setting, we instead prove an $L^\infty$ bound that holds on $Y_{\delta_0}$.

As we explain in \S \ref{subsection:proof_idea}, to prove Theorem~\ref{thm:eigen-g} we need to take the square of a fourth power, thus requiring 8 factors of $F_{\ell}(\ell_{\gamma_i}(Y, z))$ in the product~\eqref{eq:expan-g-1}, instead of two factors in \eqref{eq:eigenvalue_condition}.

\subsection{Previous work}\label{subsection:previous_work}\textcolor{white}{.}

\noindent \emph{Eigenfunction bounds.} As mentioned above, in the general setting of compact surfaces,  H\"ormander \cite{Hor68} obtained the following sharp eigenfunction bound:
\begin{equation}\label{eq:Hormander}
    \|u_j\|_{L^{\infty}}\leq C(1+\lambda_j)^{\frac{1}{4}}\|u_j\|_{L^2}. 
\end{equation}
Sogge \cite{Sog88} generalized the $L^\infty$ bound to  the following  $L^p$ bounds:
\begin{equation}\label{eq:sog-1}
   \|u_j\|_{L^{p}}\leq C(1+\lambda_j)^{\sigma(p)}\|u_j\|_{L^2},\quad \sigma(p)=\left\{\begin{array}{ll}
   1/4-1/p,    & 6\leq p\leq \infty,  \\
   \frac{1}{4}\left(\frac{1}{2}-\frac{1}{p}\right),    & 2\leq p\leq 6.
    \end{array}\right. 
\end{equation}
Any improvement on~\eqref{eq:Hormander} or~\eqref{eq:sog-1} relies on the restriction to a setting with additional assumptions. 
We recall that in the classically chaotic regime of negatively curved surfaces (where we expect quantum chaos in the form of eigenfunction delocalization), B\'erard \cite{Be77} obtained a logarithmic improvement. 
Polynomial improvements over the H\"ormander bound are much harder and presently known only in settings with even more structure. One such source of structure is arithmetic: for compact arithmetic surfaces, strong bounds were obtained by Iwaniec and Sarnak \cite{iwsa} in the spectral aspect and by Hu and Saha \cite{saha2020, husaha2020} in the depth aspect. See also Ki \cite{ki2023}, Assing and Toma \cite{assing2025}, and Fischer \cite{fischer2026} for recent developments on non-compact arithmetic surfaces. 
Iwaniec and Sarnak \cite{iwsa,Sar95,Sar03} further conjectured  that on hyperbolic surfaces,
$$\|u_{j}\|_p \ll_{\varepsilon, p} \lambda_j^\varepsilon \|u_{_j}\|_2 \quad \text{for } 2 \le p \le \infty,$$
reflecting the expectation that eigenfunctions on chaotic surfaces should be as delocalized as possible. This expectation is partially motivated by Berry's random wave conjecture \cite{Be77a}, which predicts that high-frequency eigenfunctions on chaotic systems behave like random waves; bounds on $L^p$ norms control the moments of the resulting limiting distribution.

Polynomial improvements over the H\"ormander bound have also been obtained by exploiting randomness, rather than arithmetic structure. Ingremeau and Vogel \cite{IV24} proved polynomial improvement over the H\"ormander bound by taking random pseudodifferential perturbations of the Laplacian on a fixed negatively curved manifold. Our result, Theorem~\ref{thm:eigenfn}, provides a polynomial improvement in a different random regime: we work with the unperturbed Laplacian on a \emph{random hyperbolic surface}, fix a bounded eigenvalue range, and take the degree of the cover to infinity. Prior work studying eigenfunctions in the high-genus limit focused on the Weil--Petersson model of random hyperbolic surfaces. Gilmore, Le Masson, Sahlsten, and Thomas \cite{GLMST} proved a logarithmically improved $L^p$ eigenfunction estimate for Weil--Petersson random hyperbolic surfaces. Thomas \cite{Thomas22} proved an $L^2$-mass delocalization result for eigenfunctions on Weil--Petersson random hyperbolic surfaces with large genus.

Under Benjamini--Schramm convergence, Le Masson and Sahlsten \cite{LMS} proved a quantum ergodicity statement.  See also \cite{LMS24} for the case of Eisenstein series and Hippi \cite{Hippi} for a quantum mixing result. 

\noindent \emph{Eigenvalue bounds.} We now turn to prior results on the low-energy spectral theory of random hyperbolic surfaces.  Most of such work focused on the   \emph{spectral gap}, i.e., the smallest eigenvalue on a hyperbolic surface excluding the trivial eigenvalue at $0$. 
The spectral gap appears when studying the error term for counting closed geodesics of bounded length, the error term for the hyperbolic lattice counting problem, and the error term for the rate of mixing of the geodesic flow. 
The spectral gap on hyperbolic surfaces can be thought of as an analog of the Alon--Boppana bound on graphs. For additional context on spectral gaps, see the survey \cite{MN26}. 

As the hyperbolic plane is the universal cover of a hyperbolic surface, we predict that with high probability, in the large genus limit, the spectrum of large random hyperbolic surfaces resembles that of the hyperbolic plane. As $\bbH$ has continuous spectrum in $[\tfrac{1}{4}, \infty)$, we expect a spectral gap of $\tfrac{1}{4}$.

The first result on spectral gaps is due to Huber~\cite{Hu74}, who proved that for any sequence of compact hyperbolic surfaces $X_n$ with genera $g(X_n)$ tending to infinity, we have $\limsup_{n \rightarrow \infty} \lambda_1(X_n) \leq \tfrac{1}{4}$.  More recently,  Magee, Naud, and Puder~\cite{MNP22} showed that for all $\varepsilon>0$, with high probability, a degree $n$ cover of a compact hyperbolic surface has no new eigenvalues below $\tfrac{3}{16} - \varepsilon$.
In  \cite{MPvH25}
Magee, Puder,  and van Handel improved $\tfrac{3}{16}-\varepsilon$ to $\tfrac{1}{4}-\varepsilon$ using the \emph{polynomial method}.

The polynomial method was introduced by Chen, Garza-Vargas, Tropp, and van Handel \cite{CGVTvH24} to prove strong convergence of random matrices. The paper \cite{MPvH25} extended the method beyond the original random matrix setting to random permutation representations of the surface group.
Combining their result with the Selberg trace formula, Hide, Macera, and Thomas  \cite{HMT25a} obtained a spectral gap result with polynomial error. Specifically, they showed $\lambda_1(X_n) \geq \frac{1}{4}-O(n^{-b})$ for uniformly random degree $n$ covers $X_n$  of a closed hyperbolic surface $X$ with high probability. In all of these applications, the polynomial method is paired with a spectrally-averaged identity---the Selberg
trace formula or an operator-norm bound---which provides
control of spectral averages.  To prove Theorem~\ref{thm:eigenfn}, we instead combine the polynomial method with the Selberg pre-trace formula, a pointwise spectral identity. 

The work of \cite{MNP22} was generalized to surfaces with variable negative curvature in \cite{HMN25} by Hide, Moy, and Naud. In the even more general case of covers of closed Riemannian surfaces with Anosov geodesic flow, Moy \cite{Moy26} proved the existence of a spectral gap for Pollicott--Ruelle resonances. In constant curvature, a spectral gap for the Laplacian implies a spectral gap for the Pollicott--Ruelle spectrum. However, this correspondence does not hold in variable curvature.

Much recent progress has also been made in the regime of spectral gaps for covers of finite-area noncompact hyperbolic surfaces $X$.  Hide and Magee studied this model in  \cite{HM23}. They showed for any $\varepsilon>0$, with high probability, a uniformly random degree $n$  cover of $X$ has no new eigenvalues below $\tfrac{1}{4} - \varepsilon$.
In \cite{Hi24}, Hide strengthened the work of  \cite{HM23} to replace $\varepsilon$ with $c \tfrac{(\log \log \log n)^2}{\log \log n}$.
In \cite{Mo25}, Moy studied spectral gaps in the case of covers of noncompact, geometrically finite surfaces with pinched sectional curvature. In \cite{ballmann2025spectral}, Ballmann, Mondal, and Polymerakis studied random covers of a complete connected Riemannian manifold with Ricci curvature bounded from below, under certain conditions of the fundamental group.

Finally, we discuss progress towards understanding the Laplace spectrum for other models of random large genus hyperbolic surfaces. For Weil--Petersson random surfaces, Wu and Xue \cite{Wuxue} and Lipnowski and Wright \cite{LipWr} showed that with high probability, $\lambda_1(X)\geq \frac{3}{16}-o(1)$. 
This bound was improved in the series of works \cite{AM23}, \cite{AM24:1}, \cite{AM24:2}, \cite{AM25} by Anantharaman and Monk. Specifically, they concluded that for all $\varepsilon>0$, $\lambda_1(X)\geq \tfrac{1}{4} - \varepsilon$ for a Weil--Petersson random hyperbolic surface with high probability. Recently, Hide,  Macera, and Thomas \cite{HMT25b} used the polynomial method developed in \cite{CGVTvH24,MPvH25} to give a polynomial error term in the result of Anantharaman and Monk. They show that there exists a constant $c>0$ such that a genus $g$ closed hyperbolic surface satisfies $\lambda_1(X)\geq \tfrac{1}{4} - O(\tfrac{1}{g^c})$ with high probability. In \cite{HWX26}, He, Wu, and Xue show a uniform spectral gap for hyperbolic surfaces with genus $g$ and $n= O(g^\alpha)$ cusps for $\alpha \in [0, \tfrac{1}{2})$. For the Brooks--Makover model, Shen and Wu \cite{shen2025nearly} proved the spectral gap $1/4-c/\log n$ using the polynomial method.

We conclude by mentioning work studying fluctuations of eigenvalues in short energy windows. In \cite{Rudnick23}, Rudnick studied the variance of the spectral statistics in the Weil--Petersson model. Taking the large genus limit, then the short window limit, the variance converges to that of GOE statistics. See also Rudnick and Wigman \cite{RuWi}. Fluctuations in the random cover model were studied by Naud \cite{Naud25} and Maoz \cite{Mao23}. These results were generalized to random covers of negatively curved surfaces by Moy \cite{Moy24}.

\subsection{Proof idea} 
\label{subsection:proof_idea} 
Here, we outline the proof of the eigenfunction estimate Theorem~\ref{thm:eigen-g}. As discussed above, Theorem~\ref{thm:eigenfn} follows by showing that the random cover model satisfies the conditions of Theorem~\ref{thm:eigen-g}.  Theorem~\ref{thm:lambda-g} (which we use to prove Theorem~\ref{thm:cover}) follows from the same main ideas, however Theorem~\ref{thm:eigen-g} has some extra difficulties. We highlight these difficulties as they arise. 
In \S\ref{subsection:structure-of-paper}, we list where each step of the proofs appears in the paper.

We begin by constructing a test function $h \circ f_{\Lambda_0}$ that approximates the indicator function of an interval of eigenvalues up to scale $ 
(1+\Lambda) q^{-1}$ and cuts near $\Lambda \geq 0$. In addition, the function $f_{\Lambda_0}$  is sufficiently nice so that we can apply the Selberg (pre-)trace formula and the function $h$ is a polynomial of fixed degree $q$ so we may apply the polynomial method.  

We now recall the Selberg pre-trace formula (stated in full in Lemma~\ref{lem:pretrace-twist}):
\begin{equation}\label{eq:pretrace-intro}
    \sum_{j} (h\circ f_{\Lambda_0})(\sqrt{\lambda_j-1/4}) |u_j(z)|^2 - \frac{1}{2\pi}\int_0^{\infty}(h\circ f_{\Lambda_0})(r) r\tanh \pi r dr =\sum_{\gamma \in \Gamma\setminus \{\id\}} K_{(h\circ f_{\Lambda_0})^{\vee}}(z,  \gamma z).
\end{equation}

Thus,  to prove Theorem~\ref{thm:eigen-g}, it suffices to show that the right-hand side of~\eqref{eq:pretrace-intro} is small with high probability uniformly in $z\in Y$, where $Y$ is our random hyperbolic surface. In the proof of Theorem~\ref{thm:lambda-g}, we replace the Selberg pre-trace formula with the Selberg trace formula and bound the analogous quantity to the right-hand side of~\eqref{eq:pretrace-intro}. This is done  by estimating the expectation of its square, then applying Chebyshev's inequality. However, if we attempt the same method for Theorem~\ref{thm:eigen-g}, we conclude
\begin{equation*}\label{eq:into-ex2}
    \bbE\left[\left(\sum_{\gamma \in \Gamma\setminus \{\id\}}  K_{(h\circ f_{\Lambda_0})^{\vee}}(z,  \gamma z)\right)^2\right] \lesssim_q n^{-1}.
\end{equation*}
This implies~\eqref{eq:pretrace-intro} is small for a \emph{fixed} $z\in Y$. In order to get a uniform control over $z\in Y$, we instead take the fourth power of~\eqref{eq:pretrace-intro} and separate it into  a diagonal sum and an off-diagonal sum:
\begin{equation*}\label{eq:fourth_power}
    \begin{split}
    \left(\sum_{\gamma \in \Gamma\setminus \{\id\}}  K_{(h\circ f_{\Lambda_0})^{\vee}}(z,  \gamma z)\right)^4=S(z)+\sum_{(\gamma_1,\gamma_2,\gamma_3,\gamma_4)\in \widetilde{\Gamma}^4}\prod_{\ell=1}^{4}K_{(h\circ f_{\Lambda_0})^{\vee}}(z,\gamma_{\ell}z)
\end{split}
\end{equation*}
where
\begin{equation*}
     S(z) := \frac{1}{2}\sum_{k_1,k_2,k_3,k_4\in \bbZ\setminus \{0\}}\sum_{\gamma\in \Gamma_0}\prod_{\ell=1}^{4} K_{(h\circ f_{\Lambda_0})^{\vee}}(z,\gamma^{k_\ell}z)
\end{equation*}
is the diagonal part and the sum over $\tilde{\Gamma}^4$ (defined in \eqref{eq:ttilde-gamma-def}) 
is the off-diagonal part. 
Due to the rapid decay coming from the fourth power, taking the spectral cutoff $h \circ f_{\Lambda_0}$ to be narrow is sufficient to make the diagonal part $S(z)$ small. 

Now the off-diagonal part has extra cancellation since two of those geodesics generate a free group of rank two. This is captured in the assumption \eqref{eq:expan-g-1}. Using Chebyshev's inequality, Theorem~\ref{thm:eigen-g} follows from Proposition~\ref{prop:expect-fn}, i.e.,
\begin{align}\label{eq:intro-expect}
\bbE \left[ \int_{Y} \left( \sum_{(\gamma_1,\gamma_2,\gamma_3,\gamma_4)\in\widetilde{\Gamma}^4}\prod_{\ell=1}^{4}K_{(h\circ f_{\Lambda_0})^{\vee}}(z,\gamma_{\ell}z)\right)^2 d z \right] \leq C(h)\delta_0^{-8} \frac{\Lambda_0^{12} q^{4 \kappa}} {n}. 
\end{align}

We show \eqref{eq:intro-expect} using the generalized polynomial method of \cite{MPvH25}.  For more on the polynomial method, see  \cite{vH25}. 
The first step is  to use the `niceness' of the test function $h \circ f_{\Lambda_0}$ to prove a preliminary bound for the left-hand side of~\eqref{eq:intro-expect}. Then, using~\eqref{eq:expan-g-1}, we know that the left-hand side of ~\eqref{eq:intro-expect} 
is well-approximated by a polynomial $P(1/n)$ with $\mathrm{deg}\, P(x)\lesssim q$ and $P(0)=0$.
Note that our preliminary bound for $h \circ f_{\Lambda_0}$ also holds for the approximating polynomial $P(1/n)$. Thus, we apply the  Markov brothers' inequality \eqref{eq:markov} to $P(1/n)$ to bound $P(1/n)$ by polynomial decay in $n$. Transferring this bound to the left-hand side of~\eqref{eq:intro-expect}, we conclude the bound~\eqref{eq:intro-expect}.

Note that \eqref{eq:intro-expect} depends on the choice of $h$. This artifact of the polynomial method prevents an optimal exponent in Theorem~\ref{thm:eigenfn}.

Finally, to prove Theorem~\ref{thm:eigenfn}, it suffices to show that the random cover model satisfies the polynomial approximation~\eqref{eq:expan-g-1}. Adapting the argument of \cite{MPvH25}, we  use the structure of random covers to reduce the proof to a graph counting argument.

\subsection{Structure of the paper}\label{subsection:structure-of-paper}
\begin{itemize}
    \item In \S\ref{section:preliminaries}, we review the required preliminaries for this paper. First in \S\ref{subsection:surface_group}, we define the random cover model. In \S\ref{subsection:prelim-lemmas}, we recall some well-known lemmas used in our proofs. In \S\ref{subsection:test}, we construct our test function $h \circ f_{\Lambda_0}$.
    \item In \S\ref{section:proof-theorem3}, we prove Theorem~\ref{thm:eigen-g}. Specifically, in \S\ref{subsec:eigen-g} we prove Theorem~\ref{thm:eigen-g} under the assumption of   Proposition~\ref{prop:expect-fn}. Then in \S\ref{subsec:proof_31}, we prove Proposition~\ref{prop:expect-fn}. 
    \item In \S\ref{section:adapt_polynomial_method}, we prove Theorem~\ref{thm:lambda-g}. Specifically, in \S\ref{subsection:proof_of_main_thm} we prove Theorem~\ref{thm:lambda-g} under the assumption of   Proposition~\ref{prop:expectation_squared}. Then in \S\ref{subsection:proof_of_prop}, we prove Proposition~\ref{prop:expectation_squared}. 
    \item In \S\ref{section:polynomial_expansion}, we prove Theorems~\ref{thm:eigenfn} and~\ref{thm:cover} by showing the random cover model satisfies the conditions of Theorems~\ref{thm:eigen-g} and~\ref{thm:lambda-g}, respectively. Towards showing these conditions, we prove key lemmas in \S\ref{subsection:expand-trace}. Then in \S\ref{subsection:polynomial_eigenfunction}, we prove Theorem~\ref{thm:eigenfn}, along with a remark on isotropic delocalization. In \S\ref{subsubsection:polynomial_expansion}, we prove Theorem~\ref{thm:cover}.
\end{itemize}

\subsection*{Notation}
We say $A\lesssim B$ or $A=O(B)$ if there exists a constant $C>0$ such that $A\leq CB$. To emphasize the dependence of the constant on a parameter $a$, we write $\lesssim_a$ or $O_{a}(\cdot)$. In this paper, the constant $C$ usually depends on the base hyperbolic surface $X$, but does not depend on the degree of the cover, $n\in \bbN$. The value of $C$ may vary from line to line. We use $\tr A$ for the trace of a matrix or a linear map $A$. We do not use the normalized trace in this paper.

For $f\in \mathscr{S}(\bbR)$, we define its Fourier transform by
\begin{equation*}
    \hat{f}(\xi)=\int_{\bbR} f(x) e^{-ix\xi} dx
\end{equation*}
and its inverse Fourier transform by
\begin{equation*}
    \check{f}(x)=\frac{1}{2\pi}\int_{\bbR} f(\xi) e^{ix\xi} d \xi.
\end{equation*}
\subsection*{Acknowledgments}
We would like to thank Semyon Dyatlov (supported by NSF grant DMS-2400090), Davide Macera, Michael Magee, Julien Moy, Doron Puder, Qiuyu Ren, Nikhil Srivastava, Yuhao Xue, and all the participants of the discrete analysis seminar at Berkeley for many interesting discussions. We thank Yulin Gong for suggesting that our work could be used to prove a Weyl law and for proposing that we prove an eigenfunction estimate. Additionally, the interpolation arguments \eqref{eq:interpo-1} and \eqref{eq:unif-thm2} are due to Y. Gong. We thank Francis Nier for suggesting presenting Theorem~\ref{thm:eigen-g} and Theorem~\ref{thm:lambda-g} separately.
The work in  Remark~\ref{rem:exchange} followed from a suggestion of  Tuomas Sahlsten. 
EK is supported by NSF GRFP - 1745302 and NSF DMS - 2602417. EK is grateful for the hospitality of the Institut des Hautes \'Etudes Scientifiques, where some of this collaboration was conducted.

\section{Preliminaries}\label{section:preliminaries}
We recall the necessary preliminaries in this section. 

\subsection{Random covers}\label{subsection:surface_group}
We first define the random cover model. 

Fix $X = \Gamma \backslash \bbH$ to be a closed orientable surface of genus $g_0$.
Then $\Gamma$ is a surface group with $2g_0$ generators:
$$\Gamma = \langle a_1, a_2, \ldots, a_{2g_0-1}, a_{2g_0} | [a_1, a_2] \cdots [a_{2g_0-1}, a_{2g_0}] = 1 \rangle.$$

We now define the degree $n$ covers of $X$. 
Set
$$\bbX_{g_0, n} \coloneqq \Hom(\Gamma, S_n).$$
This set is finite and endowed with the uniform probability measure.
Let $\varphi_n: \Gamma \rightarrow S_n$ be a permutation representation, where $S_n$ is the symmetric group on $[n] \coloneqq \{1, \ldots, n\}$. The action of  $\Gamma$ on $\bbH \times [n]$ is given by
\begin{equation}\label{eq:cover-q}
    \gamma \cdot (z, i) = (\gamma z, \varphi_n(\gamma)(i)).
\end{equation}
Then 
$$X_n \coloneqq \Gamma   \backslash_{\varphi_n}(\bbH \times [n])$$
is a \emph{degree $n$ cover}.

Now define $V_n \coloneqq \ell^2([n])$. Denote by $\operatorname{std}_n$ the standard representation  of the symmetric group $S_n$  of permutation matrices. Note that $\operatorname{std}_n$ acts on $V_n$. We compose $\varphi_n$ and $\operatorname{std}_n$ to obtain a representation of $\Gamma$ on $V_n$:
\begin{equation*}
\label{eq:rho_def}
\rho_{\varphi_n} \coloneqq \operatorname{std}_n \circ \varphi_n : \Gamma \rightarrow \operatorname{End}(V_n).
\end{equation*}
Oftentimes, we will drop the $\varphi_n$ subscript and use $\rho = \rho_{\varphi_n}$.

\subsection{Preliminary lemmas}\label{subsection:prelim-lemmas}

In this section, we cite some well-known lemmas necessary to our later proofs. 

Let $X$ be a hyperbolic surface with $L^2$-normalized eigenfunctions $u_j$ with eigenvalue $\lambda_j$. 
Recall that every closed oriented geodesic $\gamma$ in $X$ determines a nontrivial conjugacy class $[\tilde{\gamma}] \subset \Gamma \setminus \{\id\}$. 
\begin{notation}
\label{notation:gamma}
By abuse of notation, we use $\gamma$ to denote both the geodesic and the element of the conjugacy class $[\tilde{\gamma}]$ with the shortest representing word in the
generators. 
\end{notation}

We have the Selberg pre-trace formula (see,  for example, \cite[Theorem 9.3.7]{Bu10}).

\begin{lem}[Selberg pre-trace formula]\label{lem:pretrace-twist}
Suppose $\phi(x) \in C_c^{\infty}(\mathbb{R})$ is an even function and let
\begin{equation}\label{eq:selberg-trans}
    k(t):=-\frac{1}{\sqrt{2}\pi}\int_t^{\infty}\frac{\phi'(s)}{\sqrt{\cosh s-\cosh t}} ds, \quad t\geq 0,\quad K(z,w):=k(d_{\bbH}(z,w)),
\end{equation}
where $d_{\mathbb{H}}(z,w)$ is the hyperbolic distance on $\bbH$.
Then
\begin{equation}\label{eq:pretrace-1}
    \sum_{j} \hat{\phi}\left(\sqrt{\lambda_j-1/4}\right) \left|u_j(z)\right|^2 = \frac{1}{2\pi}\int_0^{\infty}\hat{\phi}(r) r\tanh \pi r dr+\sum_{\gamma \in \Gamma\setminus \{\id\}}  K(z,  \gamma z).
\end{equation}
\end{lem}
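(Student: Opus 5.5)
The plan is the standard derivation of the pre-trace formula. We expand the automorphic kernel spectrally and geometrically, and identify the Selberg/Harish-Chandra transform of the point-pair invariant $K$ with $\hat\phi$.

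\textbf{Step 1: the Selberg transform.} For $k$ defined by \eqref{eq:selberg-trans}, we verify that the Selberg transform of $K(z,w)=k(d_{\bbH}(z,w))$ equals $\hat\phi$. That is, for every eigenfunction $f$ of $\Delta_{\bbH}$ with eigenvalue $\tfrac14+r^2$,
\[
\int_{\bbH}K(z,w)f(w)\,dw=\hat\phi(r)f(z).
\]
This uses the classical chain $k\mapsto Q\mapsto g$: $Q(e^u+e^{-u}-2)=\int k\,dv$ over a horocycle-type integral, $g(u)=Q(\cdot)$, and $\hat g=h$. Inverting this Abel-type transform gives exactly \eqref{eq:selberg-trans} with $g=\phi$. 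Evenness and compact support of $\phi$ ensure that $k$ is smooth away from $0$ and compactly supported. It is also integrable, since the singularity $(\cosh s-\cosh t)^{-1/2}$ is integrable.

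\textbf{Step 2: the automorphic kernel.} Define $K_\Gamma(z,w)=\sum_{\gamma\in\Gamma}K(z,\gamma w)$. The sum is finite because $k$ has compact support and $\Gamma$ acts properly discontinuously. By Step 1, the integral operator with kernel $K_\Gamma$ on $L^2(X)$ acts on each $u_j$ by multiplication by $\hat\phi(\sqrt{\lambda_j-1/4})$; this uses that the convolution commutes with $\Delta$ and the mean-value property. Since $K_\Gamma$ is smooth and $X$ is compact, this operator is Hilbert--Schmidt and trace-class. Because $\hat\phi$ is Schwartz and Weyl's law holds, the expansion
\[
K_\Gamma(z,w)=\sum_j\hat\phi\bigl(\sqrt{\lambda_j-1/4}\bigr)u_j(z)\overline{u_j(w)}
\]
converges absolutely and uniformly.

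\textbf{Step 3: isolate the identity term.} Set $w=z$. The term $\gamma=\id$ contributes $k(0)$, and it remains to show
\[
k(0)=\frac{1}{2\pi}\int_0^\infty\hat\phi(r)\,r\tanh(\pi r)\,dr.
\]
This is the Plancherel inversion for the spherical transform on $\bbH$, with Plancherel density $\frac{1}{2\pi}r\tanh\pi r$ in these normalizations. One route is to compute $k(0)=-\frac{1}{\sqrt2\pi}\int_0^\infty\phi'(s)\bigl(\cosh s-1\bigr)^{-1/2}ds$, substitute $\phi(s)=\frac{1}{2\pi}\int\hat\phi(r)e^{irs}dr$, and evaluate $\int_0^\infty\sin(rs)/\sinh(s/2)\,ds=\pi\tanh(\pi r)$.

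\textbf{Main obstacle.} The main difficulty is keeping the constants consistent with the paper's Fourier convention $\hat f(\xi)=\int f e^{-ix\xi}dx$. The convergence issues are routine, but the exact normalization in Step 3 depends on that convention. Since this is standard (e.g.\ \cite[Theorem 9.3.7]{Bu10}), I would cite it for the transform pair and only verify the normalization.
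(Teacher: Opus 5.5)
Your proposal is correct and follows essentially the paper's route: the paper just cites the standard pre-trace formula \cite[Theorem 9.3.7]{Bu10}, whose proof is the Selberg-transform and automorphic-kernel argument you outline, and your direct evaluation of $k(0)$ checks out. Using $(\cosh s-1)^{-1/2}=(\sqrt{2}\sinh(s/2))^{-1}$, $\phi'(s)=-\frac{1}{\pi}\int_0^\infty r\hat\phi(r)\sin(rs)\,dr$ and $\int_0^\infty \sin(rs)/\sinh(s/2)\,ds=\pi\tanh(\pi r)$ gives exactly $\frac{1}{2\pi}\int_0^\infty\hat\phi(r)\,r\tanh(\pi r)\,dr$ under the paper's Fourier convention. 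The remaining details are standard: Step 2's uniform convergence needs a polynomial bound on $\|u_j\|_{L^\infty}$ (e.g.\ via Sobolev embedding) in addition to Weyl's law, $k$ must be smooth up to $t=0$ rather than only away from $0$ so that $K_\Gamma$ is continuous on the diagonal, and Step 1 must hold for complex $r$ to cover $\lambda_j<1/4$.
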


In applications of \eqref{eq:pretrace-1}, we use the following notation.
\begin{notation}\label{notation:K}
Set $k=k_{\phi}$ and $K=K_{\phi}$ to be the $k$ and $K$ which come from $\phi$ as in \eqref{eq:selberg-trans}.
\end{notation}

Let $\calP(X)$ denote the set of primitive closed geodesics on $X$. We now recall the  Selberg trace formula  (see, for example, \cite[Theorem 3.4]{borth}). 
\begin{lem}[Selberg trace formula]
Suppose $\phi(x) \in C_c^{\infty}(\mathbb{R})$ is an even function. Then,
\begin{equation}\label{eq:selberg_trace_formula}
\sum_j \hat{\phi}\left( \sqrt{\lambda_j - \frac{1}{4}}\right) = \frac{\Vol (X)}{2 \pi} \int_{0}^\infty \hat{\phi} (r) r\tanh\left(\pi r\right) dr+ \sum_{\gamma \in \calP(X)} \sum_{m=1}^{\infty} \frac{\ell_{\gamma}(X)}{2\sinh\left(\frac{ m\ell_{\gamma}(X)}{2}\right) } \phi(m\ell_{\gamma}(X)).
\end{equation}   
\end{lem}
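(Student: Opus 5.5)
The statement just above is the classical Selberg trace formula (Lemma, eq. \eqref{eq:selberg_trace_formula}). I would derive it from the Selberg pre-trace formula of Lemma~\ref{lem:pretrace-twist}, which is already available. Since $X$ is compact and $\phi\in C_c^\infty$ is even, $\hat\phi$ is Schwartz on $\bbR$ and extends to an entire function. Hence the spectral sum in \eqref{eq:pretrace-1} converges absolutely and uniformly in $z$, by Weyl's law together with the uniform bound $\sum_{\lambda_j\le T}|u_j(z)|^2\lesssim T$. Integrating \eqref{eq:pretrace-1} over a fundamental domain $\mathcal F$ for $\Gamma$ and using $\|u_j\|_{L^2}=1$ gives the left-hand side of \eqref{eq:selberg_trace_formula}. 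The identity term integrates to $\frac{\Vol(X)}{2\pi}\int_0^\infty \hat\phi(r)\,r\tanh\pi r\,dr$. It then remains to compute $\int_{\mathcal F}\sum_{\gamma\neq\id}K(z,\gamma z)\,dz$. Interchanging sum and integral is harmless here, because $k$ has compact support and only finitely many $\gamma$ contribute on the compact set $\mathcal F$.

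Next I would group the nontrivial $\gamma$ by conjugacy class. Every nontrivial element of a cocompact torsion-free $\Gamma$ is hyperbolic, and each such element is uniquely $\gamma_0^m$ with $\gamma_0$ primitive and $m\ge1$. The centralizer of $\gamma=\gamma_0^m$ is the cyclic group $\langle\gamma_0\rangle$. The standard unfolding then gives
\[
\sum_{\gamma'\in[\gamma]}\int_{\mathcal F}K(z,\gamma' z)\,dz=\int_{\langle\gamma_0\rangle\backslash\bbH}K(z,\gamma z)\,dz .
\]
Here one writes $[\gamma]=\{\sigma\gamma\sigma^{-1}:\sigma\in\langle\gamma_0\rangle\backslash\Gamma\}$, uses $K(\sigma^{-1}z,\gamma\sigma^{-1}z)=K(z,\sigma\gamma\sigma^{-1}z)$, and notes that the translates $\sigma^{-1}\mathcal F$ tile a fundamental domain for $\langle\gamma_0\rangle$. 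Summing over oriented primitive geodesics and $m\ge1$ matches the indexing over $\calP(X)$ and $m$ in \eqref{eq:selberg_trace_formula}. This requires that $\calP(X)$ count each geodesic with both orientations, or equivalently that $\gamma$ and $\gamma^{-1}$ be identified. I would check the normalization convention at this point.

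For the orbital integral I would conjugate so that $\gamma z=e^{\ell}z$ with $\ell=m\ell_{\gamma_0}$. A fundamental domain for $\langle\gamma_0\rangle$ is then $\{1\le |z|<e^{\ell_{\gamma_0}}\}$. In coordinates $z=e^{\rho}e^{i\theta}$, the distance satisfies $\cosh d(z,\gamma z)=1+2\sinh^2(\ell/2)/\sin^2\theta$ and $dz=d\rho\,d\theta/\sin^2\theta$. Integrating over $\rho$ produces the factor $\ell_{\gamma_0}$. Substituting $u=\sinh(\ell/2)\cot\theta$ turns the $\theta$-integral into $\frac{1}{\sinh(\ell/2)}\int_{\bbR}k\big(\cosh^{-1}(1+2u^2+2\sinh^2(\ell/2))\big)\,du$, which is an Abel transform of $k$.

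The main obstacle is the final step: identifying this Abel transform with $\phi(\ell)/2$ using the specific normalization \eqref{eq:selberg-trans} of $k$. I would verify it by inverting the Abel transform, i.e.\ checking that $\phi(s)=\sqrt2\int_s^\infty k(t)\sinh t\,(\cosh t-\cosh s)^{-1/2}dt$ is inverse to \eqref{eq:selberg-trans}. That inversion is the classical computation $\int_s^t (\cosh t-\cosh r)^{-1/2}(\cosh r-\cosh s)^{-1/2}\sinh r\,dr=\pi$. The change of variables $\cosh t=1+2u^2+2\sinh^2(\ell/2)$ then converts $\int_\bbR k\,du$ into exactly this expression evaluated at $s=\ell$, giving $\phi(\ell)/2$. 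Assembling the pieces, each pair $(\gamma_0,m)$ contributes $\frac{\ell_{\gamma_0}}{2\sinh(m\ell_{\gamma_0}/2)}\phi(m\ell_{\gamma_0})$. Alternatively, since this is textbook material, I would cite \cite[Theorem 3.4]{borth} after checking that the constants agree with Lemma~\ref{lem:pretrace-twist}.
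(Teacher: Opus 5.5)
Your proposal is correct. Note that the paper does not prove this lemma at all: it simply quotes it from \cite[Theorem 3.4]{borth}, which is your fallback option. Your route instead derives it from the pre-trace formula (Lemma~\ref{lem:pretrace-twist}) by the standard argument: integrate over a compact fundamental domain, unfold each hyperbolic conjugacy class to an integral over $\langle\gamma_0\rangle\backslash\bbH$, and evaluate the orbital integral by Abel inversion.

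The computations check out. With $\cosh t=\cosh\ell+2u^2$ one gets $\int_{\bbR}k\,du=\frac{1}{\sqrt2}\int_\ell^\infty k(t)\sinh t\,(\cosh t-\cosh\ell)^{-1/2}\,dt$. Inserting \eqref{eq:selberg-trans}, applying Fubini, and using $\int_a^b((x-a)(b-x))^{-1/2}\,dx=\pi$ gives $\sqrt2\int_s^\infty k(t)\sinh t\,(\cosh t-\cosh s)^{-1/2}\,dt=-\int_s^\infty\phi'=\phi(s)$. So each pair $(\gamma_0,m)$ contributes exactly $\frac{\ell_{\gamma_0}}{2\sinh(m\ell_{\gamma_0}/2)}\phi(m\ell_{\gamma_0})$.

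Compared with the citation, your derivation buys an explicit check that the normalization of $k$ in Lemma~\ref{lem:pretrace-twist} is the one giving these constants in the trace formula. The paper relies on this implicitly, since it feeds the same test functions $h\circ f_{\Lambda}$ into both formulas. The citation buys only brevity.

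One correction to your convention remark: the two phrasings you give are opposite, not equivalent. Unfolding produces a sum over conjugacy classes. In a torsion-free $\Gamma$ an element is never conjugate to its inverse, because a conjugator would have to reverse the axis and hence be an elliptic involution. So $\calP(X)$ must be the set of \emph{oriented} primitive closed geodesics, with $\gamma$ and $\gamma^{-1}$ counted separately. If they were identified, the geodesic term would need an extra factor of $2$. This oriented convention is the one the paper uses: it matches oriented geodesics with conjugacy classes, and it sums $\tr\rho(\gamma^k)$ over $\calP(X)$ in Proposition~\ref{prop:rational_function}.

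A smaller, harmless slip: $\sigma\gamma\sigma^{-1}$ is well defined on $\Gamma/\langle\gamma_0\rangle$, not on $\langle\gamma_0\rangle\backslash\Gamma$.
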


We quote the  following elaboration of the Markov brothers' inequality from \cite[Lemma 2.1]{MPvH25}. 
\begin{lem}[Markov brothers' inequality]\label{lem:markov_brothers}
For every real polynomial $P$ of degree at most $q$ and every $k \in \bbN$,
\begin{equation}\label{eq:markov}
\sup_{t \in [0, \frac{1}{2q^2}]} |P^{(k)} (t)| \leq \frac{2^{2k + 1} q^{4k}}{(2k-1)!!} \sup_{n \geq q^2} \left|P\left(\frac{1}{n}\right)\right|,
\end{equation}
where $(2k-1)!! \coloneqq (2k-1)(2k-3)\cdots 3\cdot 1$.
\end{lem}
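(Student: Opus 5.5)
The plan is to derive \eqref{eq:markov} from the classical Markov brothers' inequality on an interval, and then replace the supremum over the interval by the supremum over the discrete points $1/n$. I would work on the interval $I:=[0,\frac{1}{2q^2}]$ throughout.

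\emph{Step 1: classical inequality, rescaled.} The classical V.~A.~Markov inequality states that a real polynomial $R$ of degree at most $q$ satisfies
\[
\sup_{[-1,1]}|R^{(k)}|\le T_q^{(k)}(1)\sup_{[-1,1]}|R|,
\]
where $T_q$ is the Chebyshev polynomial. One has
\[
T_q^{(k)}(1)=\prod_{j=0}^{k-1}\frac{q^2-j^2}{2j+1}\le \frac{q^{2k}}{(2k-1)!!}.
\]
The affine map $[-1,1]\to I$ has scaling factor equal to half the length of $I$, namely $\frac{1}{4q^2}$. Each derivative therefore picks up a factor $4q^2$, which gives
\[
\sup_{I}|P^{(k)}|\le \frac{2^{2k}q^{4k}}{(2k-1)!!}\,M,\qquad M:=\sup_I|P| .
\]
In particular, taking $k=1$ gives $\sup_I|P'|\le 4q^4M$.

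\emph{Step 2: from the grid to the interval.} It remains to show $M\le 2\sup_{n\ge q^2}|P(1/n)|$. Let $M$ be attained at $t_0\in I$. The grid points $1/n$ with $n\ge 2q^2$ all lie in $I$. They include the right endpoint $1/(2q^2)$, since $2q^2$ is an integer, and they accumulate at $0$. Consecutive grid points $1/(n+1)<1/n$ in $I$ are separated by $\frac{1}{n(n+1)}\le\frac{1}{4q^4}$. Hence there is a grid point $1/n\in I$ with $|t_0-1/n|\le\frac{1}{8q^4}$; if $t_0=0$, any sufficiently large $n$ works, by continuity.

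By the mean value theorem and the $k=1$ case of Step 1,
\[
M=|P(t_0)|\le |P(1/n)|+\frac{1}{8q^4}\cdot 4q^4M=|P(1/n)|+\frac{M}{2}.
\]
Thus $M\le 2|P(1/n)|\le 2\sup_{n\ge q^2}|P(1/n)|$. Inserting this into Step 1 produces exactly the constant $\frac{2^{2k+1}q^{4k}}{(2k-1)!!}$.

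The only delicate point is the bookkeeping in Step 2. The interval must be chosen short enough that the grid spacing there, at most $\frac{1}{4q^4}$, beats the derivative bound $4q^4M$ with a contraction factor strictly less than $1$, so that the term $M/2$ can be absorbed into the left-hand side. The choice of half-length $\frac{1}{4q^2}$ does this, and it also explains the $2^{2k}$ and the extra factor $2$ in the stated constant. The classical Markov inequality for $T_q$ is taken as known.
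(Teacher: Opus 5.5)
Your proof is correct, and it is essentially the standard argument. The paper gives no proof of its own; it quotes \cite[Lemma 2.1]{MPvH25}, which rests on the same two ingredients from \cite{CGVTvH24}. The first is V.~A.~Markov's inequality rescaled to $[0,\frac{1}{2q^2}]$. The second is a comparison of $\sup_{[0,1/(2q^2)]}|P|$ with the grid values $P(1/n)$, $n\ge 2q^2$, via the mean value theorem, with the resulting $M/2$ term absorbed into the left-hand side. Your constants all check out: the factor $(4q^2)^k$ from rescaling, the grid-spacing bound $\frac{1}{4q^4}$, and the final factor $2$.
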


We recall the Margulis lemma on $\mathbb{H}^2$ (\cite[\S 12.6]{Ratcliffe}):
\begin{lem}[Margulis lemma on $\bbH^2$]\label{lem:margulis}
    There exists a constant $\epsilon_0>0$ such that for any hyperbolic surface $X$ and $z\in X$, there is at most one primitive geodesic loop (up to inverse) based at $z$ with length $\leq\epsilon_0$.
\end{lem}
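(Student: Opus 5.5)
We reduce to the universal cover and apply the Zassenhaus--Margulis argument in $\mathrm{PSL}_2(\bbR)$, then finish with the classification of elementary Fuchsian groups. Write $X=\Gamma\backslash\bbH$ with $\Gamma<\mathrm{PSL}_2(\bbR)$ discrete and torsion-free, since $X$ is a smooth orientable hyperbolic surface. Fix $z\in X$ and a lift $\tilde z\in\bbH$. Geodesic loops based at $z$ of length $\le\epsilon_0$ correspond bijectively to elements $\gamma\in\Gamma\setminus\{\id\}$ with $d_{\bbH}(\tilde z,\gamma\tilde z)\le\epsilon_0$, and primitivity of the loop means primitivity of $\gamma$ in $\Gamma$. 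Let $S_{\epsilon_0}$ be the set of such $\gamma$ and $\Gamma_{\epsilon_0}=\langle S_{\epsilon_0}\rangle$. It suffices to show that $\Gamma_{\epsilon_0}$ is cyclic whenever $\epsilon_0$ is below a universal constant. Indeed, if $\Gamma_{\epsilon_0}=\langle g\rangle$ with $g$ chosen primitive in $\Gamma$ (possible because centralizers of nontrivial elements in a torsion-free Fuchsian group are infinite cyclic), then every element of $S_{\epsilon_0}$ is some $g^k$. Such an element is primitive only for $k=\pm1$, which gives at most one loop up to inversion.

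\textbf{Step 1 (Zassenhaus neighborhood).} There is a neighborhood $U$ of the identity in $\mathrm{PSL}_2(\bbR)$, with $U$ independent of $\Gamma$, with the following property: for any discrete $\Delta<\mathrm{PSL}_2(\bbR)$, the group $\langle\Delta\cap U\rangle$ is nilpotent. I would prove this in the usual way. For $A,B$ near $\id$ we have $\|[A,B]-\id\|\le C\|A-\id\|\|B-\id\|$. Hence iterated commutators of elements of $\Delta\cap U$ converge to $\id$. By discreteness they must equal $\id$ after a bounded number of steps, which yields nilpotency.

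\textbf{Step 2 (from small displacement to $U$).} Conjugate so that $\tilde z=i$. The set $\{A: d(i,Ai)\le 1\}$ is a compact neighborhood $M$ of $K=\mathrm{PSO}(2)$. Cover $M$ by $N$ translates $A_mV$, where $V$ is a symmetric neighborhood of $\id$ with $V^{-1}V\subset U$; here $N$ is universal. Given $\gamma\in S_{\epsilon_0}$, its powers $\gamma,\dots,\gamma^{N+1}$ lie in $M$ as long as $(N+1)\epsilon_0\le 1$. By pigeonhole, two of them lie in the same translate, so $\gamma^{k}\in U$ for some $1\le k\le N$. The standard form of this argument shows that $\Gamma_{\epsilon_0}$ contains a subgroup of index bounded by a constant depending only on $N$ that is generated by elements of $\Gamma\cap U$, and hence is nilpotent by Step 1. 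This step is the main obstacle, because the rotation parts of short elements are not small. The point to handle carefully is that the bounded-index claim comes from covering $M$, rather than from assuming that the generators themselves are near $\id$. If this becomes messy, my first fallback is to use directly that $K$-rotation components of short elements are constrained by torsion-freeness and discreteness. Alternatively, I would simply cite the general Margulis lemma for $\bbH^2$ at this step.

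\textbf{Step 3 (classification).} $\Gamma_{\epsilon_0}$ is therefore a virtually nilpotent, discrete, torsion-free subgroup of $\mathrm{PSL}_2(\bbR)$. Such a group is elementary, so it fixes a point or a pair of points in $\partial\bbH$. Being torsion-free, it is hyperbolic or parabolic, hence it is infinite cyclic or trivial; the case $\mathbb Z^2$ parabolic is excluded because that is not discrete in $\mathrm{PSL}_2(\bbR)$. This gives the claimed cyclicity with $\epsilon_0=1/(N+1)$ universal, and the lemma follows from the reduction in the first paragraph.
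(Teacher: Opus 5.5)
The paper does not prove this lemma. It cites the general Margulis lemma for $\bbH^2$ (Ratcliffe, \S 12.6) and records Yamada's sharp value $\epsilon_0=2\,\mathrm{arcsinh}(1)$. You instead reprove the two-dimensional case by the classical route: a Zassenhaus neighborhood, a bounded-index nilpotent subgroup of $\Gamma_{\epsilon_0}$, and the fact that torsion-free elementary Fuchsian groups are cyclic. You then pass to a maximal cyclic subgroup to count primitive loops. This is correct in outline and self-contained. It yields only a non-explicit universal $\epsilon_0=1/(N+1)$, which is all the lemma asserts; the citation also gives the optimal constant.

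Step 2, as written, does not yet deliver the bounded-index claim. Pigeonholing the powers $\gamma,\dots,\gamma^{N+1}$ of a single short $\gamma$ only shows that each short element has a power of exponent at most $N$ in $U$. That is a statement about individual elements and says nothing about the group they generate. What you need is a coset count over words:
\begin{itemize}
\item Put $H=\langle\Gamma_{\epsilon_0}\cap U\rangle$. It lies in $\Gamma_{\epsilon_0}$ and is nilpotent by Step 1.
\item Let $B_k$ be the set of products of at most $k$ elements of $S_{\epsilon_0}\cup S_{\epsilon_0}^{-1}$, and cover $M$ by $N$ right translates $VA_m$.
\item Each factor moves $i$ by at most $\epsilon_0$, so $B_k\subset M$ for $k\le N$.
\item If $g,g'\in B_N$ lie in the same $VA_m$, then $g'g^{-1}\in VV^{-1}\cap\Gamma_{\epsilon_0}\subset H$ (as $V$ is symmetric, $VV^{-1}=V^{-1}V\subset U$). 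Hence $B_N$ meets at most $N$ right cosets $Hg$.
\item The set of right cosets met by $B_{k+1}$ is obtained from the set met by $B_k$ by right multiplication by $S_{\epsilon_0}^{\pm1}\cup\{\id\}$. So these sets increase, and once two consecutive ones agree they stay constant.
\item Starting from one coset at $k=0$ and having at most $N$ at stage $N$, they must stabilize by stage $N$. This gives $[\Gamma_{\epsilon_0}:H]\le N$, so $\Gamma_{\epsilon_0}$ is virtually nilpotent.
\end{itemize}
In Step 3, also note that torsion-freeness excludes the finite (elliptic) case and elements swapping two boundary points, not just the $\mathbb{Z}^2$ parabolic case. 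With these two additions your argument is a complete proof, and your stated fallback of citing the general Margulis lemma is exactly what the paper does.
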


\begin{notation}\label{notation:margulis}
We call $\epsilon_0$ the \emph{Margulis constant} for $\bbH^2$. Any primitive geodesic loop with length less than or equal to $\epsilon_0$ is a \emph{short geodesic loop}. The Margulis constant of $\mathbb{H}^2$ is $\epsilon_0=2\mathrm{arcsinh}(1)$, see \cite[Theorem 2]{Yam81}.
\end{notation}

We now prove the following lemma on the geometry of hyperbolic surfaces. 

\begin{lem}\label{lem:collar-2}
    For $0<\delta<\frac{1}{2}\mathrm{arcsinh}(1)$, we have
    \begin{equation*}
        d(X_{2\delta},\partial X_{\delta})\geq \log 2.
    \end{equation*}
\end{lem}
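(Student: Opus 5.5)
The plan is to reduce the statement to a computation in the standard collar around a short closed geodesic. The Lipschitz property of $z\mapsto\injrad_z(X)$ only gives $d(X_{2\delta},\partial X_\delta)\geq\delta$, which is too weak. Instead I will use the explicit formula for the injectivity radius inside a collar. This formula shows that the injectivity radius grows roughly exponentially in the distance to the core geodesic, so doubling it forces a displacement of at least $\log 2$.

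\textbf{Step 1 (setup).} Let $w\in\partial X_\delta$, so $\injrad_w(X)=\delta$, and let $z\in X_{2\delta}$. Join $w$ to $z$ by a minimizing geodesic segment $\sigma$. By continuity there is a first point $z'$ on $\sigma$ with $\injrad_{z'}(X)=2\delta$, and $d(w,z)\geq d(w,z')$. So it suffices to bound $d(w,z')$ from below.

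\textbf{Step 2 (one loop class along the path).} Along the subsegment from $w$ to $z'$, every point $y$ has a primitive geodesic loop of length $2\injrad_y\leq 4\delta<2\,\mathrm{arcsinh}(1)=\epsilon_0$. By the Margulis lemma (Lemma~\ref{lem:margulis}) this short loop is unique up to inverse. By continuity its free homotopy class is locally constant along $\sigma$, so it is constant. Since $X$ is compact, this class is represented by a closed geodesic $\gamma$ of length $\ell\leq 4\delta$. Hence the whole subsegment lies in the region where the shortest loop is homotopic to $\gamma$.

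\textbf{Step 3 (collar formula).} By the collar lemma, $\gamma$ has an embedded collar of half-width $w_\gamma$ with $\sinh w_\gamma=1/\sinh(\ell/2)$. In Fermi coordinates, a point at distance $r$ from $\gamma$ has shortest loop in the class of $\gamma$ of length $2\,\mathrm{arcsinh}(\sinh(\ell/2)\cosh r)$. Therefore
\begin{equation*}
\sinh(\injrad_y)=\sinh(\ell/2)\cosh r(y).
\end{equation*}
At the collar boundary the right-hand side equals $\cosh(\ell/2)>1>\sinh(2\delta)$, using $2\delta<\mathrm{arcsinh}(1)$. So the points $w$ and $z'$ lie inside the collar and the formula applies to both. (Staying inside the collar along the path needs a short continuity argument.) Writing $r=r(w)$ and $r'=r(z')$, we get
\begin{equation*}
\frac{\cosh r'}{\cosh r}=\frac{\sinh 2\delta}{\sinh\delta}=2\cosh\delta\geq 2.
\end{equation*}

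\textbf{Step 4 (conclusion).} We have the identity
\begin{equation*}
\cosh(r+\log 2)=e^r+\tfrac14 e^{-r}\leq e^r+e^{-r}=2\cosh r\leq\cosh r'.
\end{equation*}
Hence $r'\geq r+\log 2$. Since the distance to $\gamma$ is $1$-Lipschitz,
\begin{equation*}
d(w,z)\geq d(w,z')\geq r'-r\geq\log 2.
\end{equation*}

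\textbf{Main obstacle.} The delicate part is Steps 2–3: justifying that along the path the injectivity radius is realized by loops in the class of a single closed geodesic $\gamma$, and that the points stay inside its collar, so that the Fermi-coordinate formula is valid. I would handle this using Margulis uniqueness together with the bound at the collar boundary given in Step 3.
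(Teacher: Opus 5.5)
Your proposal is correct and is essentially the paper's argument: both rest on the collar identity $\sinh(\injrad_y)=\sinh(\ell/2)\cosh d(y,\gamma)$ for the unique short geodesic, which gives $\cosh r'/\cosh r=2\cosh\delta$ and hence a displacement of at least $\log(2\cosh\delta)\geq\log 2$ (the paper writes this as a difference of $\mathrm{arccosh}$'s, while you use $\cosh(r+\log 2)\leq 2\cosh r$). The only difference is that the paper cites Buser's Theorem 4.1.6 for both the collar containment and the formula, and simply asserts that $z_1,z_2$ lie in the same collar, whereas you get a single geodesic from Margulis uniqueness plus continuity. Your worry about staying inside the collar is moot: once the shortest loop at $y$ is freely homotopic to $\gamma$, the identity holds with $r=d_X(y,\gamma)$ anywhere on $X$, so neither the collar lemma nor simplicity of $\gamma$ is needed.
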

\begin{proof}
    Suppose that $z\in X$ has injectivity radius $\leq 2\delta$. By \cite[Theorem 4.1.6]{Bu10}, we know $z$ is in the collar of a short geodesic $\beta$ with length $\ell\leq 2\mathrm{arcsinh}(1)$. Suppose the collar has boundary $\partial \mathcal{C}$, then again using \cite[Theorem 4.1.6]{Bu10}, we have
    \begin{equation*}
        \sinh (\injrad_z(X)) =  \cosh \frac{\ell}{2}\cosh d(z,\partial \mathcal{C})-\sinh d(z,\partial\mathcal{C})=\sinh \frac{\ell}{2}\cosh d(z,\beta).
    \end{equation*}
    Suppose $z_1$ has injectivity radius $\delta$ and $z_2$ is the closest point with injectivity radius $2\delta$. Note that $z_1$ and $z_2$ must lie in the same collar. Then,
    \begin{equation*}
        d(z_2,\beta)-d(z_1,\beta) = \mathrm{arccosh}\left(\frac{\sinh 2\delta}{\sinh\frac{\ell}{2}}\right)-\mathrm{arccosh}\left(\frac{\sinh \delta}{\sinh\frac{\ell}{2}}\right) \geq \log (2\cosh \delta)\geq \log 2.
    \end{equation*}
\end{proof}

Finally, we have the following bound on the number of geodesic loops of bounded length.
\begin{lem}
\label{lem:geodesic_loop_bound}
Let $Y = \Gamma \backslash \bbH$ be a hyperbolic surface and $\delta_0 \in (0,1)$. Then for $z \in Y_{\delta_0}$,
$$\#\{\gamma \in \Gamma : d_\bbH(z, \gamma z) \leq L\} \lesssim \frac{e^{L}}{\delta_0},$$
where the implicit constant holds uniformly over the choice of $Y$, $\delta_0$ and $z \in Y_{\delta_0}$.
\end{lem}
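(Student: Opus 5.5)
The plan is a standard volume-packing argument in $\bbH$. Fix a lift $\tilde z\in\bbH$ of $z\in Y_{\delta_0}$. Since $\injrad_z(Y)\geq \delta_0$, the ball $B_{\delta_0}(z)\subset Y$ is isometric to a hyperbolic ball. Equivalently, the translates $B_{\delta_0}(\gamma\tilde z)$, $\gamma\in\Gamma$, are pairwise disjoint: if $B_{\delta_0}(\gamma\tilde z)\cap B_{\delta_0}(\gamma'\tilde z)\neq\emptyset$ with $\gamma\ne\gamma'$, then $d_\bbH(\tilde z,\gamma^{-1}\gamma'\tilde z)<2\delta_0$, which would give a geodesic loop at $z$ of length $<2\delta_0$. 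This contradicts the injectivity radius bound, since the injectivity radius at $z$ is half the length of the shortest geodesic loop based at $z$.

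Every $\gamma$ with $d_\bbH(\tilde z,\gamma\tilde z)\leq L$ therefore contributes a ball $B_{\delta_0}(\gamma\tilde z)$ contained in $B_{L+\delta_0}(\tilde z)$, and these balls are disjoint. Comparing areas,
\[
\#\{\gamma: d_\bbH(\tilde z,\gamma\tilde z)\le L\}\cdot 2\pi(\cosh\delta_0-1)\;\le\; 2\pi(\cosh(L+\delta_0)-1).
\]
We use $\cosh\delta_0-1\gtrsim\delta_0^2$ for $\delta_0\in(0,1)$ and $\cosh(L+\delta_0)-1\lesssim e^{L}$. Naively this gives the bound $e^L/\delta_0^2$, which is worse than claimed by one factor of $\delta_0$. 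This is the main obstacle.

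To get $e^L/\delta_0$, I would replace the disc packing by a more efficient count. First suppose there is no short geodesic loop at $z$ of length $\le \epsilon_0$ beyond what $\delta_0$ forces. More precisely, Lemma~\ref{lem:margulis} says at most one primitive loop (up to inverse) based at $z$ has length $\le\epsilon_0$; call it $\beta$. We split $\Gamma$ into cosets of the cyclic group $\langle\beta\rangle$ (or take the trivial subgroup if no such loop exists). The elements not in $\langle\beta\rangle$ are counted by packing with discs of a fixed radius comparable to $\epsilon_0$, using the thick–thin decomposition: points of the orbit within distance $\epsilon_0/2$ of each other differ by a power of $\beta$. This contributes $O(e^L)$ cosets meeting $B_L$. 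Within each coset, the powers $\beta^k$ translate along an axis by at least $2\delta_0$ each, so roughly $L/\delta_0$ of them fit in a ball of radius $L$. This crude estimate would give $Le^L/\delta_0$, which is still off by a factor of $L$. To remove the factor $L$, I would instead count orbit points in shells: orbit points of a single coset $g\langle\beta\rangle$ lie on a curve (an equidistant of the translated axis) with consecutive spacing $\ge 2\delta_0$. The points within distance $L$ of $\tilde z$ are then controlled by the length of that curve inside $B_L(\tilde z)$, divided by $\delta_0$. Summing over cosets is equivalent to integrating the length of the $\Gamma$-orbit of $\beta$'s axis inside $B_{L}$. This total is $\lesssim e^L$ by an area argument: the collar around each translate of the axis has width $\gtrsim 1$ and the collars are disjoint by the collar lemma \cite[Theorem 4.1.6]{Bu10}. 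Combining the two cases gives $\lesssim e^L/\delta_0$ uniformly in $Y$, $\delta_0$ and $z\in Y_{\delta_0}$. The step I expect to require care is this last length-packing estimate, namely showing that the translated axes carry total length $O(e^L)$ in $B_L$ uniformly. I would verify it using the embedded collar of width $\asymp\log(1/\ell_\beta)\gtrsim 1$.
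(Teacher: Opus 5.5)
Your final argument is correct in substance and uses the same mechanism as the paper's proof, but you organize it in two stages where the paper uses one. The paper does not decompose into cosets of $\langle\beta\rangle$. When $\injrad_z(Y)<\mathrm{arcsinh}(1)/2$, it places $z$ at Fermi coordinate $\rho_0$ in the collar of the short geodesic. It then packs the $\Gamma$-translates of a single fundamental piece $\mathcal{B}$ of the band $\{\rho_0-\mathrm{arcsinh}(1)\le\rho\le\rho_0\}$. This piece has bounded diameter, since its $\theta$-extent is at most $\ell_\beta\cosh\rho_0\le 2\sinh\injrad_z(Y)\lesssim 1$. Its area is $\asymp\ell_\beta\cosh\rho_0\gtrsim\sinh\delta_0\gtrsim\delta_0$. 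So a single disjointness statement gives $e^L/\delta_0$. Your version has three steps: count cosets by $\epsilon_0$-separation; count points in one coset by arc length along the hypercycle carrying them, with spacing $\ge 2\delta_0$; then bound the total arc length by the area of collar bands of width $\sim 1$. This is the same ``spacing times width'' estimate split into pieces. It costs extra bookkeeping (a $+1$ per coset, and connectivity of the arcs inside $B_L$), which the paper's fundamental piece makes unnecessary.

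Several statements in your write-up need correcting.

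\textbf{The summation step.} You pass from the hypercycle $E_r$ through $\tilde z$ (at distance $r$ from the axis of $\beta$) to the translated axes themselves. These are not equivalent. Projections of consecutive orbit points onto the axis are spaced by $\ell_\beta$, not by $2\delta_0$. The identity $\sinh\injrad_z(Y)=\sinh(\ell_\beta/2)\cosh r$ shows $\ell_\beta$ can be smaller than $\delta_0$ by a factor $\cosh r$. So dividing ``the translated axes have total length $O(e^L)$ in $B_L$'' by $\delta_0$ undercounts. What you actually need is the following: the one-sided bands $\{r-\mathrm{arcsinh}(1)\le\rho\le r\}$ about the curves $gE_r$ lie in the lifted collars, are pairwise disjoint, and have area $\gtrsim$ the arc length of $gE_r$. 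They lie in the collars because $\cosh r\le 1/\sinh(\ell_\beta/2)=\sinh w(\ell_\beta)$ gives $r<w(\ell_\beta)$, and $w(\ell_\beta)\ge\mathrm{arcsinh}(1)$. These bands are precisely the paper's $\mathcal{B}$.

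\textbf{The ``crude'' per-coset bound.} Your bound of about $L/\delta_0$ points per coset is false, for the same reason. Take $\ell_\beta\asymp\delta_0e^{-L/2}$, with $z$ at distance $r\approx L/2$ from $\beta$ and $\injrad_z(Y)=\delta_0$. Then the coset $\langle\beta\rangle$ alone contributes $\asymp e^{L/2}/\delta_0$ points. You discard this bound, so nothing breaks, but the phrase ``translates along an axis by $\ge 2\delta_0$'' conflates displacement along the axis with displacement of $\tilde z$.

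\textbf{The collar width.} The width is $w(\ell)=\mathrm{arcsinh}(1/\sinh(\ell/2))\ge\mathrm{arcsinh}(1)$ for $\ell\le\epsilon_0$. It is not $\asymp\log(1/\ell)$ when $\ell$ is near $\epsilon_0$, where $\log(1/\ell)<0$.
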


\begin{proof}
 We lift $z \in Y_{\delta_0}$ to $\bbH$. Suppose $z$ satisfies $\injrad_z(Y)\geq \mathrm{arcsinh}(1)/2$. Then the balls  $B_{\gamma z}(\mathrm{arcsinh}(1)/2)$ (indexed by $\gamma \in \Gamma$) are pairwise disjoint. If $d_{\bbH} (z, \gamma z) \leq L$, then $B_{\gamma z}(\mathrm{arcsinh}(1)/2) \subset B_z(L + \mathrm{arcsinh}(1)/2)$. Thus, there are at most $C e^L $ such  $\gamma$ for a uniform constant $C>0$. On the other hand, if $\injrad_z(Y) < \mathrm{arcsinh}(1)/2$, then by \cite[Theorem 4.1.6]{Bu10}, $z$ is contained in the collar of a short geodesic $\beta$ with length $\ell \leq 2\mathrm{arcsinh}(1)$. Suppose the collar of $\beta$ is given by $\{(\rho,\theta): |\rho|\leq \rho_1, \theta \in [0,1)\}$ ($\rho_1\geq \mathrm{arcsinh}(1)$) with the metric $d\rho^2+\ell^2\cosh^2(\rho)d\theta^2$ and $z$ has coordinate $\rho=\rho_0>0$. Take the neighborhood $\mathcal{B}=\{\rho_0-\mathrm{arcsinh}(1)\leq\rho\leq \rho_0\}$. Then the sets $\gamma \mathcal{B}$ (indexed by $\gamma \in \Gamma$) are pairwise disjoint. Moreover, $\mathcal{B}$ has area $\gtrsim \delta_0$. If $d_{\bbH} (z, \gamma z) \leq L$, then $\gamma \mathcal{B} \subset B_z(L + C)$. Thus, there are at most $C e^L /\delta_0$ such  $\gamma$ for a uniform constant $C>0$.
\end{proof}

\subsection{Test function}\label{subsection:test}

For our application of the Selberg pre-trace formula~\eqref{eq:pretrace-1} and the trace formula~\eqref{eq:selberg_trace_formula},  we want to use a $\hat{\phi}$ that approximates an interval of eigenvalues and is suited to  the polynomial method. We now construct such a $\hat{\phi}$.

We begin with the function $f:\bbR\cup i\bbR\to \bbR$ from \cite[\S 2]{HMT25b}. Specifically, $f$ has the following properties:
\begin{enumerate}
    \item $f$ is smooth.
    \item $f$  is non-negative on $\bbR\cup i\bbR$. 
    \item $\check{f}$ is smooth, even, non-negative, and supported in $[-1,1]$. Since
    \begin{equation}\label{eq:f'(it)}
        \frac{d}{dt} f(it)= \int   \xi e^{t \xi}\check{f}(\xi) d\xi >0,\quad t>0,
    \end{equation}
    the map $t \in [0, \infty) \mapsto f(ti)$ is strictly increasing. 
    Moreover, we have
    $f(0)>0$, and $ f([0, \infty)) \subset [0, f(0)]$. 
    \item
From the smoothness of $\check{f}$, we know
\begin{equation}\label{eq:decay-f}
    |f(x)|\leq C_N(1+|x|)^{-N}, \quad
     x\in \bbR
\end{equation}
for any $N\in \bbN$. 
    \item Since
\begin{equation}\label{eq:f''(0)}
    f'(0)=\int_{\bbR} -i\xi\check{f}(\xi) d\xi = 0,\quad f''(0)=\int_{\bbR} -\xi^2\check{f}(\xi) d\xi<0,
\end{equation}
there exists $c_0 \in (0,1/2)$ such that
\begin{equation}\label{eq:f-monotone}
\begin{split}
   -C x\leq f'(x)\leq -c_0 x,\quad 0<f(4c_0)\leq f(x)&\leq f(0)-\frac{c_0}{2} x^2,\quad x\in [0,4c_0],\\
    f(x)&\leq f(4c_0),\qquad \quad\, x>4c_0.
\end{split}
\end{equation}
\end{enumerate}

We will use a rescaled version of $f$:
\begin{equation*}\label{eq:def-fLa}
    f_{\Lambda_0}(x) \coloneqq f(c_0\Lambda_0^{-1/2}x),\quad \Lambda_0\in [1/4,\infty).
\end{equation*}
Note that $f_{\Lambda_0}(\sqrt{\Delta_{X_n} - \tfrac{1}{4}})$ is a bounded operator. The spectrum of $f_{\Lambda_0}(\sqrt{\Delta_{X_n} - \tfrac{1}{4}})$ is contained in $[0, f_{\Lambda_0}(\tfrac{i}{2})]$, where $[f_{\Lambda_0}(0), f_{\Lambda_0}(\tfrac{i}{2})]$ corresponds to the eigenvalues of $\Delta_{X_n}$ below $\tfrac{1}{4}$ and $f_{\Lambda_0}([0, \infty))$ corresponds to the eigenvalues of $\Delta_{X_n}$ above $\tfrac{1}{4}$.

Let $h(x)=x\tilde{h}(x)$ be a polynomial of degree $q\geq 1$. We use the notation
\begin{equation*}\label{eq:norm}
    \|\tilde{h}\|:=\sup_{x\in [0,f(i/2)]} |\tilde{h}(x)|,\quad \|\tilde{h}\|_{C^\kappa}=\sum_{0\leq j\leq \kappa}\|\partial_x^j\tilde{h}\|.
\end{equation*} 
Note that since $c_0 \Lambda_0^{-1/2} \leq 1$, $f_{\Lambda_0}([0, \infty)) \cup f_{\Lambda_0} ([0, i/2]) \subset [0,f(i/2)]$.

\begin{lem}\label{lem:unif-K}
    Let $f_{\Lambda_0}$ be defined as above. Let $h(x)=x\tilde{h}(x)$ where $\tilde{h}(x)$ is a continuous function on $[0,f(i/2)]$, and $t_\gamma \coloneqq d_{\bbH}(z, \gamma z)$ for $\gamma \in \Gamma\setminus \{\id\}$. Then for
    \begin{equation*}
        K_{(h \circ f_{\Lambda_0})^{\vee}} (z, \gamma z) = k_{(h \circ f_{\Lambda_0})^{\vee}} (t_\gamma)=-\frac{1}{\sqrt{2}\pi}\int_t^{\infty}\frac{((h\circ f_{\Lambda_0})^{\vee})'(s)}{\sqrt{\cosh s -\cosh t_\gamma}} ds
    \end{equation*}
    and $c_0\Lambda_0^{-1/2}<1$, we have
    \begin{equation*}
        |k(t_\gamma)|\lesssim \left\{\begin{array}{ll}
         \int_{\mathbb{R}} |h\circ f_{\Lambda_0}(r)| r^2 dr \lesssim c_0^{-3}\Lambda_0^{3/2} e^{-t_\gamma/2}\|\tilde{h}\|,     & 0<t_\gamma<1, \\
         e^{-t_\gamma/2}\int_{\mathbb{R}} |h\circ f_{\Lambda_0}(r)| |r| dr \lesssim c_0^{-2}\Lambda_0 e^{-t_\gamma/2}\|\tilde{h}\|,     & t_\gamma \geq 1.
        \end{array}\right. 
    \end{equation*}

\end{lem}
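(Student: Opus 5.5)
The plan is to write $g := h\circ f_{\Lambda_0}$ and $\phi := g^{\vee}$, and to bound $k$ directly by the spherical (Selberg/Harish-Chandra) inversion formula rather than through the Abel-type integral in \eqref{eq:selberg-trans}. Since $\hat\phi = g$ is even, the kernel has the spectral representation
\[
k(t)=\frac{1}{2\pi}\int_0^\infty g(r)\,P_{-1/2+ir}(\cosh t)\, r\tanh(\pi r)\,dr ,
\]
where $P_{-1/2+ir}$ is the Legendre (spherical) function. This representation is equivalent to \eqref{eq:selberg-trans}: it is the standard Selberg transform pair, and it is consistent with \eqref{eq:pretrace-1} at $t=0$. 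Two standard bounds then drive everything:
\begin{itemize}
\item the uniform bound $|P_{-1/2+ir}(\cosh t)|\le 1$ for real $r$;
\item the decay $|P_{-1/2+ir}(\cosh t)|\lesssim e^{-t/2}(1+t)$, with the refinement $|P_{-1/2+ir}(\cosh t)|\lesssim e^{-t/2}\,(r\sinh t)^{-1/2}$ once $rt\gtrsim 1$.
\end{itemize}

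\textbf{Short range $0<t<1$.} Here $e^{-t/2}\asymp 1$. Using $|P|\le1$ and $r\tanh\pi r\le \min(\pi r^2, r)$, we get
\[
|k(t)|\lesssim \int_{\mathbb{R}} |g(r)|\,(r^2+|r|)\,dr .
\]
It remains to evaluate the integral. We have $|g(r)| = |f_{\Lambda_0}(r)|\,|\tilde h(f_{\Lambda_0}(r))| \le \|\tilde h\|\,|f(c_0\Lambda_0^{-1/2}r)|$, because $f_{\Lambda_0}(\mathbb{R})\subset[0,f(i/2)]$. Substituting $s=c_0\Lambda_0^{-1/2}r$ and using the decay \eqref{eq:decay-f} gives
\[
\int |g|\,r^2\,dr \lesssim c_0^{-3}\Lambda_0^{3/2}\|\tilde h\|,\qquad \int |g|\,|r|\,dr\lesssim c_0^{-2}\Lambda_0\|\tilde h\| .
\]
Since $c_0\Lambda_0^{-1/2}<1$, the second quantity is dominated by the first. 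This gives the first line of the lemma.

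\textbf{Long range $t\ge1$.} The naive decay bound $e^{-t/2}(1+t)$ loses a factor $t$, and the target $e^{-t/2}\int|g||r|\,dr$ has no such factor. To handle this, split the $r$-integral at $r=1/t$.
\begin{itemize}
\item For $r\le 1/t$: use $|P|\lesssim e^{-t/2}(1+t)$ together with $r\tanh\pi r\lesssim r^2\le r/t$. The factor $t$ cancels, leaving $e^{-t/2}\int |g|\,r\,dr$.
\item For $r\ge 1/t$: use the oscillatory bound $e^{-t/2}(r\sinh t)^{-1/2}\lesssim e^{-t/2}$, which applies since $t\ge1$ and $r t\gtrsim 1$ (interpolating at $r\sim 1$).
\end{itemize}
Both pieces are bounded by $e^{-t/2}\int|g||r|\,dr\lesssim c_0^{-2}\Lambda_0e^{-t/2}\|\tilde h\|$.

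\textbf{Main obstacle.} The hard step is justifying the uniform estimate
\[
|P_{-1/2+ir}(\cosh t)|\lesssim e^{-t/2}\min\bigl(1+t,\,(r\sinh t)^{-1/2}\bigr),\qquad t\ge1,
\]
rather than citing it loosely. I would derive it from the Laplace--Mehler integral
\[
P_{-1/2+ir}(\cosh t)=\frac{\sqrt2}{\pi}\int_0^t\frac{\cos(rs)}{\sqrt{\cosh t-\cosh s}}\,ds .
\]
The trivial estimate $|\cos(rs)|\le 1$ gives $\lesssim e^{-t/2}(1+t)$, since $\cosh t-\cosh s\gtrsim e^{t}$ for $s\le t-1$, while the endpoint singularity near $s=t$ is integrable. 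For large $r$, one integration by parts in $s$, keeping the boundary singularity at $s=t$, gives the extra decay. A cheaper alternative, if only $t\ge1$ matters, is to work directly with \eqref{eq:selberg-trans}: bound $\phi'(s)$ via $\phi'(s)=\frac{1}{2\pi}\int i r g(r)e^{irs}\,dr$, so $|\phi'|\le\int|g||r|\,dr$, and use $\int_t^{t+A}(\cosh s-\cosh t)^{-1/2}\,ds\lesssim e^{-t/2}$. The catch is that this needs $\phi'$ to have controlled support, and $\phi=g^\vee$ is not compactly supported. I would attempt this route first, but fall back on the spectral formula if it fails.
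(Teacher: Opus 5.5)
Your short-range argument is fine. In fact $|P_{-1/2+ir}(\cosh t)|\le 1$ and $r\tanh\pi r\le \pi r^2$ alone give $|k(t)|\le \frac14\int_{\mathbb{R}}|g|\,r^2\,dr$ for every $t$, which is the lemma's first inequality exactly. The long-range step, however, rests on a false estimate. Your bound $|P_{-1/2+ir}(\cosh t)|\lesssim e^{-t/2}(r\sinh t)^{-1/2}$ carries a spurious factor $e^{-t/2}$. For fixed $r>0$ and $t\to\infty$, $P_{-1/2+ir}(\cosh t)$ oscillates with amplitude $\asymp (r\tanh\pi r)^{-1/2}e^{-t/2}$, because $|\Gamma(ir)/\Gamma(\frac12+ir)|=(r\tanh\pi r)^{-1/2}$. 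Even the corrected form $(r\sinh t)^{-1/2}\asymp r^{-1/2}e^{-t/2}$ is only valid uniformly for $r\gtrsim 1$. For $1/t\lesssim r\lesssim 1$ the envelope is of order $e^{-t/2}/r$, which reaches $t e^{-t/2}$ near $r\sim 1/t$. So the claim $|P|\lesssim e^{-t/2}$ on all of $r\ge 1/t$ is false, and the step fails as written when you pair it with the weight $r\tanh\pi r\le r$. The repair is to keep the weight $r\tanh\pi r\le \pi r^2$ on $1/t\le r\le 1$, together with $|P_{-1/2+ir}(\cosh t)|\lesssim e^{-t/2}(r^{-1}+r^{-1/2})$ for $t\ge 1$; then $r^2\cdot e^{-t/2}/r=e^{-t/2}r$. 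That bound is what your Laplace--Mehler route actually yields. On $[0,t-1]$ the factor $(\cosh t-\cosh s)^{-1/2}$ is increasing and $\lesssim e^{-t/2}$, so the second mean value theorem gives $e^{-t/2}/r$. The endpoint piece $[t-1,t]$ gives $e^{-t/2}\min(1,r^{-1/2})$.

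More to the point, the ``cheaper alternative'' you set aside is exactly the paper's proof, and the catch you anticipated does not exist. No support control on $\phi'$ is needed, because the kernel is integrable on the whole half-line. From $\cosh s-\cosh t=2\sinh\frac{s+t}{2}\sinh\frac{s-t}{2}$ one gets, for $t\ge1$, $(\cosh s-\cosh t)^{-1/2}\lesssim e^{-t/2}(s-t)^{-1/2}$ on $[t,t+1]$ and $\lesssim e^{-s/2}$ on $[t+1,\infty)$. Hence $\int_t^\infty(\cosh s-\cosh t)^{-1/2}\,ds\lesssim e^{-t/2}$, and $\|\phi'\|_\infty\le\frac{1}{2\pi}\int|g||r|\,dr$ finishes the case $t\ge 1$. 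For $t<1$ the paper also stays with the Abel formula. The range $s\ge 1$ is bounded the same way. On $[t,1]$ it uses that $\phi$ is even, so $\phi'(0)=0$ and $|\phi'(s)|\le s\|\phi''\|_\infty\le \frac{s}{2\pi}\int|g|\,r^2\,dr$. Combined with $\cosh s-\cosh t\ge\frac12(s^2-t^2)$ and $\int_t^1 s(s^2-t^2)^{-1/2}\,ds\le 1$, this removes the singularity; using only $\|\phi'\|_\infty$ there would cost a factor $\log(1/t)$. The paper's argument needs no special-function input at all. It yields $\int|g|r^2\,dr+\int|g||r|\,dr$ for $t<1$, which suffices since $c_0\Lambda_0^{-1/2}<1$. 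Your spectral route gives the cleaner short-range bound, but makes the long-range case hinge on a delicate estimate for Legendre functions that must hold uniformly in $r$.
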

\begin{proof}
For simplicity of notation, set $t = t_\gamma$. Recall that 
    \begin{equation*}
        \cosh s - \cosh t =2\sinh\left(\frac{s+t}{2}\right) \sinh\left(\frac{s-t}{2}\right).
    \end{equation*}
If $t\geq 1$, this implies that
\begin{equation*}
    |k(t)|\lesssim \left(e^{-t/2} \int_t^{t+1} \frac{1}{\sqrt{s-t}} ds +\int_{t+1}^{\infty} e^{-s/2}  ds \right)\int_{\mathbb{R}} |h\circ f_{\Lambda_0}(r)| |r|dr \lesssim c_0^{-2}\Lambda_0 e^{-t/2}\|\tilde{h}\|.
\end{equation*}
    
If $t<1$, then we have
    \begin{equation*}
        |k(t)|\lesssim \int_t^{1} \frac{|((h\circ f_{\Lambda_0})^{\vee})'(s)|}{\sqrt{s^2-t^2}}ds +c_0^{-2}\Lambda_0 \|\tilde{h}\|.
    \end{equation*}
    Since $f_{\Lambda_0}$ is even, $((h\circ f_{\Lambda_0})^{\vee})'(0)=0$. Thus we have
    \begin{equation*}
    \begin{split}
        |k(t)| &\lesssim \int_t^{1} \frac{s}{\sqrt{s^2-t^2}}ds \left(\sup_{r} |((h\circ f_{\Lambda_0})^{\vee})''(r)|\right) +c_0^{-2}\Lambda_0 \|\tilde{h}\|\\
        &\lesssim \int_t^{1} \frac{s}{\sqrt{s^2-t^2}}ds \int_{\mathbb{R}} |h\circ f_{\Lambda_0}(r)| r^2 dr +c_0^{-2}\Lambda_0 \|\tilde{h}\|\lesssim c_0^{-3}\Lambda_0^{3/2} \|\tilde{h}\|.\qedhere
    \end{split}
    \end{equation*}
\end{proof}

\section{Proof of Theorem~\ref{thm:eigen-g}}\label{section:proof-theorem3}
In this section, we prove Theorem~\ref{thm:eigen-g} following the outline in \S\ref{subsection:proof_idea}. Let $Y$ be a random family of compact oriented hyperbolic surfaces of area $2\pi(2g-2)$.  

Recall the definition of $f_{\Lambda_0}$ from \S\ref{subsection:test}. When we apply \eqref{eq:pretrace-1}, we will set $\hat{\phi}(x) = h\circ f_{\Lambda_0}(x)$, where $h(x)$ is a polynomial of degree $q$ such that $h(x)=x\tilde{h}(x)$.

\subsection{Proof of Theorem~\ref{thm:eigen-g}}\label{subsec:eigen-g}
Before the proof, we introduce some notations. For simplicity, we will deal with the case when $Y$ is connected (otherwise, just consider each connected component of $Y$). Let $Y=\Gamma\backslash\mathbb{H}$. 
Let $\Gamma_0$ be the set of primitive elements in $\Gamma\setminus \{\id\}$. For $z\in \bbH $, we define
\begin{equation}\label{eq:loc-osci}
    S(h\circ f_{\Lambda_0})(z) := \frac{1}{2}\sum_{k_1,k_2,k_3,k_4\in \bbZ\setminus \{0\}}\sum_{\gamma\in \Gamma_0}\prod_{\ell=1}^{4}K_{(h\circ f_{\Lambda_0})^{\vee}}(z,\gamma^{k_\ell}z)
\end{equation}
and
\begin{equation}\label{eq:local-weyl-Vn}
    V(h\circ f_{\Lambda_0})(z):=\left(\sum_{j} h\circ f_{\Lambda_0}(\sqrt{\lambda_j-1/4}) |u_j(z)|^2 - \frac{1}{2\pi }\int_0^{\infty} h\circ f_{\Lambda_0}(r) r\tanh \pi r dr\right)^4.
\end{equation}
Using the pre-trace formula \eqref{eq:pretrace-1}, we see that $S(h \circ f_{\Lambda_0})$ is the diagonal component of $V(h\circ f_{\Lambda_0})$. More specifically, $V(h\circ f_{\Lambda_0})$ can be written as a sum over $k_1, \ldots, k_4 \in \bbZ \setminus\{0\}$ and $\gamma_1, \ldots, \gamma_4 \in \Gamma_0$, while $S(h \circ f_{\Lambda_0})$ is a diagonal sum over $k_1, \ldots, k_4 \in \bbZ \setminus\{0\}$ and $\gamma \in \Gamma_0$. The factor $1/2$ in \eqref{eq:loc-osci} is due to double counting since $\gamma^k =(\gamma^{-1})^{-k}$.

We will prove Theorem~\ref{thm:eigen-g} assuming the following proposition, a bound on the off-diagonal component. This proposition is proved via the polynomial method in \S\ref{subsec:proof_31}. In the following, we will always assume 
\begin{equation}\label{eq:c0-assumption}
\Lambda_0\geq 1/4, \quad 2c_0< \epsilon_0,
\end{equation}
where $\epsilon_0$ is Margulis constant in Lemma~\ref{lem:margulis}. 

\begin{prop}\label{prop:expect-fn}
Under the assumptions of Theorem~\ref{thm:eigen-g}, for $f_{\Lambda_0}(x)$ from \S\ref{subsection:test} and a degree $q$ polynomial $h(x)=x \tilde{h}(x)$,
there exists $C=C(\kappa, C_0)>0$ such that
    \begin{equation}\label{eq:expect-fn-prop}
        \bbE\left[\int_{Y_{\delta_0}} \left(V(h\circ f_{\Lambda_0})(z)-S(h\circ f_{\Lambda_0})(z)\right)^2 dz \right]\leq  C \delta_0^{-8}\frac{\Lambda_0^{12} q^{4\kappa}}{ n}\|\tilde{h}\|^8.
    \end{equation}
    Moreover, if one replaces $V(h\circ f_{\Lambda_0})(z)$ and $S(h\circ f_{\Lambda_0})(z)$ with  multilinear expressions in four different polynomials $h_1,h_2,h_3,h_4$ with degrees $q_1, q_2, q_3, q_4\leq q$:
    \begin{equation}\label{eq:generalize-V}
    \begin{split}
    &V(h_1\circ f_{\Lambda_0},h_2\circ f_{\Lambda_0},h_3\circ f_{\Lambda_0},h_4\circ f_{\Lambda_0})(z)\\
    &:=\prod_{m=1}^4\left(\sum_{j} h_m\circ f_{\Lambda_0}(\sqrt{\lambda_j-1/4}) |u_j(z)|^2 - \frac{1}{2\pi }\int_0^{\infty} h_m\circ f_{\Lambda_0}(r) r\tanh \pi r dr\right),
    \end{split}
\end{equation}
and
\begin{equation}\label{eq:S-def}
S(h_1\circ f_{\Lambda_0},h_2\circ f_{\Lambda_0},h_3\circ f_{\Lambda_0},h_4\circ f_{\Lambda_0})(z) := \frac{1}{2}\sum_{k_1,k_2,k_3,k_4\in \bbZ\setminus \{0\}}\sum_{\gamma\in \Gamma_0}\prod_{\ell=1}^{4}K_{(h_\ell \circ f_{\Lambda_0})^{\vee}}(z,\gamma^{k_\ell}z),
\end{equation}
we get an analogous estimate to~\eqref{eq:expect-fn-prop} with error $C \delta_0^{-8} \frac{\Lambda_0^{12} q^{4\kappa}}{n} \|\tilde{h}_1\|^2  \|\tilde{h}_2\|^2 \|\tilde{h}_3\|^2 \|\tilde{h}_4\|^2$.
\end{prop}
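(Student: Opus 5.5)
Via the pre-trace formula \eqref{eq:pretrace-1} with $\hat\phi=h\circ f_{\Lambda_0}$, each factor in $V$ equals $\sum_{\gamma\ne\id}K(z,\gamma z)$. Hence $V-S$ is exactly the off-diagonal sum over $(\gamma_1,\dots,\gamma_4)\in\widetilde\Gamma^4$, and its square is the sum over $(\widetilde\Gamma^4)^2$ of an eightfold product $\prod_{\ell=1}^8 k_{(h\circ f_{\Lambda_0})^\vee}(d_{\bbH}(z,\gamma_\ell z))$. (For a loop based at $z$ the distance $t_\gamma$ is the loop length, so $\ell_\gamma(Y,z)=t_\gamma$.) The strategy is therefore the polynomial method: \eqref{eq:expan-g-1} controls the expectation up to a polynomial in $1/n$, an a priori bound controls that polynomial, and Lemma~\ref{lem:markov_brothers} turns the two into $1/n$ decay. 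The multilinear version goes through verbatim, so I only treat the case $h_1=\dots=h_4=h$.

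\textbf{Step 1 (truncation).} The kernel $k$ is not compactly supported in $t$, but $(h\circ f_{\Lambda_0})^\vee$ is. Since $h$ has degree $q$ and $\check f$ is supported in $[-1,1]$, the function $f_{\Lambda_0}^m$ has inverse Fourier transform supported in $|s|\le mc_0\Lambda_0^{-1/2}\le q$. Because $k(t)$ involves $\phi'$ only on $s\ge t$, we get $k(t)=0$ for $t> q$. Thus we may take $F_\ell=k\cdot\mathbf 1_{[0,L]}$ with $L\sim q$, smoothed slightly to be continuous (this is harmless since $k$ vanishes there). Lemma~\ref{lem:unif-K} gives $\|F_\ell\|_{(0,L]}\lesssim \Lambda_0^{3/2}\|\tilde h\|$.

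\textbf{Step 2 (a priori bound).} Let $E(n)$ denote the expectation on the left of \eqref{eq:expect-fn-prop}. For $z\in Y_{\delta_0}$, Lemma~\ref{lem:geodesic_loop_bound} and the decay $e^{-t/2}$ in Lemma~\ref{lem:unif-K} give, via dyadic shells in $t$,
\[
\sum_{\gamma\ne \id}|k(t_\gamma)|\lesssim \delta_0^{-1}\Lambda_0^{3/2}\|\tilde h\|\,\sum_{m\le q} e^{m}e^{-m/2}\lesssim \delta_0^{-1}\Lambda_0^{3/2}e^{q/2}\|\tilde h\|.
\]
Raising this to the eighth power and integrating over $Y$ (of area $\lesssim n$) yields $|E(n)|\lesssim n\,\delta_0^{-8}\Lambda_0^{12}e^{4q}\|\tilde h\|^8$. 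The exponential in $q$ is too lossy. The better route is to use the sharper bound from the pre-trace side: each factor $\sum_j h\circ f_{\Lambda_0}|u_j|^2-\dots$ is a local spectral quantity, so I would bound it by $\|\tilde h\|$ times a local Weyl bound. This is polynomial in $\Lambda_0$ and independent of $q$, and gives $|V|\lesssim \delta_0^{-4}\Lambda_0^{6}\|\tilde h\|^4$ pointwise on $Y_{\delta_0}$. The diagonal term $S$ is bounded similarly, using Lemma~\ref{lem:margulis}: each $\gamma\in\Gamma_0$ contributes only powers, and there are $\lesssim e^{q}/\delta_0$ such primitive classes. Here the $e^q$ loss must be avoided by grouping by the lengths $|k_\ell|\ell_\gamma$. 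Normalizing by $\Vol$, one obtains $|E(n)/n|\lesssim \delta_0^{-8}\Lambda_0^{12}\|\tilde h\|^8$ uniformly in $n$.

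\textbf{Step 3 (polynomial method).} By \eqref{eq:expan-g-1}, $E(n)=Q_L(1/n)^{-1}P(1/n)+O\big((C_0p)^{\kappa p}n^{-p-1}\|\cdot\|^8\big)$, with $P(x)=\sum_{j\ge1}f_jx^j$ of degree $\le C_0 q$ and $P(0)=0$. Step 2, together with $Q_L\in[C_0^{-1},C_0]$, bounds $|P(1/n)|$ for all $n\ge (C_0p)^\kappa$, once the error term is absorbed. Apply Lemma~\ref{lem:markov_brothers} to the polynomial $P$ rescaled by $x\mapsto x/(C_0p)^{\kappa-2}$, so that the sup over $n\ge q^2$ is replaced by the admissible range $n\ge(C_0p)^\kappa$. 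This bounds $\sup|P'|$ by $q^{O(\kappa)}$ times the a priori bound. Since $P(0)=0$, we get $|P(1/n)|\le n^{-1}\sup|P'|$, i.e.\ $E(n)\lesssim q^{4\kappa}\delta_0^{-8}\Lambda_0^{12}\|\tilde h\|^8/n$. For $n<(C_0p)^\kappa$ the claimed bound follows trivially from Step 2, since $q^{4\kappa}/n\gtrsim1$ there.

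\textbf{Main obstacle.} The delicate step is Step 2: obtaining an a priori bound with no exponential loss in $q$. That loss would be fatal, because Markov only tolerates losses polynomial in $q$. I would handle it by working spectrally for $V$, via the local Weyl bound $\sum_j|h\circ f_{\Lambda_0}||u_j(z)|^2\lesssim\|\tilde h\|\,\delta_0^{-1}\Lambda_0^{3/2}$ obtained from the pre-trace formula applied to $f_{\Lambda_0}$ itself, which is a fixed function of degree $1$. For $S$ I would use the explicit structure of cyclic groups (collar geometry).
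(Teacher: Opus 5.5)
You have the paper's architecture: rewriting $V-S$ via the pre-trace formula as the off-diagonal sum over $(\widetilde{\Gamma}^4)^2$, truncation at $L\sim q$, an a priori bound obtained spectrally from $h\circ f_{\Lambda_0}=f_{\Lambda_0}\cdot(\tilde h\circ f_{\Lambda_0})$ and the pre-trace formula for $f_{\Lambda_0}$, then Markov. However, Step~3 is off by a factor of $n$, so it does not yield \eqref{eq:expect-fn-prop}.

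Put $B:=\delta_0^{-8}\Lambda_0^{12}\|\tilde h\|^8$. Step~2 controls $E(n)/\Vol(Y)$, so what it actually gives is $|P(1/n)|\lesssim nB$ for $n\ge (C_0p)^\kappa$, not $|P(1/n)|\lesssim B$. The latter is what you feed into Lemma~\ref{lem:markov_brothers} to bound $\sup|P'|$. You could correct this by passing to $G(x):=xP(x)$, which does satisfy $|G(1/n)|\lesssim B$ on the admissible range. But with only first-order vanishing you then get $|P(1/n)|=n|G(1/n)|\le \sup|G'|\lesssim q^{O(\kappa)}B$, i.e.\ $E(n)=O(1)$. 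That is a full factor $n$ weaker than the claim, and it gives nothing in the proof of Theorem~\ref{thm:eigen-g}, where the integral over all of $Y_{\delta_0}$ (not its average) must be small in order to control every ball $B(z_0,\log 2)$ simultaneously.

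The missing idea is that you must gain $n^2$ over the a priori bound, and this needs the \emph{double} zero of $G$ at $x=0$. The condition $G(0)=0$ comes from the area normalization, and $G'(0)=P(0)=0$ is the free-group cancellation encoded by $j\ge 1$ in \eqref{eq:expan-g-1}. Apply \eqref{eq:markov} with $k=2$ to $G$, rescaled as you propose so that the range $n\ge (C_0p)^\kappa$ is admissible. This gives $\sup_{[0,1/(2(C_0p)^\kappa)]}|G''|\lesssim (C_0p)^{2\kappa}q^4B\lesssim q^{4\kappa}B$, using $\kappa>2$. Hence $|P(1/n)|=n|G(1/n)|\le \frac{1}{2n}\sup|G''|\lesssim q^{4\kappa}B/n$ for $n\ge 2(C_0p)^\kappa$, which is exactly the paper's argument. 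Your small-$n$ case survives, but the correct reason is $E(n)\lesssim nB\le q^{4\kappa}B/n$, because $n^2\lesssim q^{2\kappa}$ in that range.

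A minor point about Step~2: the bound on $S$ needs no grouping by $|k_\ell|\ell_\gamma$, and no collar analysis beyond the single short loop from Lemma~\ref{lem:margulis}. Lemma~\ref{lem:unif-K} gives $|k(t)|\lesssim \Lambda_0\|\tilde h\|e^{-t/2}$ uniformly in $q$. The fourth power then gives $e^{-2t}$ decay per primitive class, which beats the $e^{t}/\delta_0$ count of Lemma~\ref{lem:geodesic_loop_bound}.
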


We can rewrite \eqref{eq:expect-fn-prop} in the following way. Importantly, we generalize the condition that $h(x)=x \tilde{h}(x)$  is a polynomial of degree $q$; it now can be any smooth function. If we replace $V(h\circ f_{\Lambda_0})$ and $S(h \circ f_{\Lambda_0})$ with something of the form \eqref{eq:generalize-V} and \eqref{eq:S-def}, respectively, the corresponding inequality holds. 
\begin{cor} \label{cor:expect-fn-prop}
For any smooth function $h(x)=x\tilde{h}(x)$, $\tilde{h}(x)\in C^{\infty}([0,f(i/2)])$, we have
    \begin{equation}\label{eq:expect-fn-general}
    \mathbb{E}\left[\int_{Y_{\delta_0}}  \left(V(h\circ f_{\Lambda_0})(z)-S(h\circ f_{\Lambda_0})(z)\right)^2 dz \right] \leq  \frac{C\Lambda_0^{12}}{ n\delta_0^8}\|\tilde{h}\|_{C^{2\kappa+1}}^8.
\end{equation}

\end{cor}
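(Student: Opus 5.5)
We derive Corollary~\ref{cor:expect-fn-prop} from the multilinear form of Proposition~\ref{prop:expect-fn}, using a telescoping polynomial approximation of $\tilde h$. Set $a:=f(i/2)$. We expand $\tilde h$ on $[0,a]$ in rescaled Chebyshev polynomials, $\tilde h=\sum_{m\ge 0} c_m T_m(2x/a-1)$. Since $\tilde h\in C^{2\kappa+1}$, integration by parts in the Chebyshev coefficient formula gives
\[
|c_m|\lesssim m^{-(2\kappa+1)}\|\tilde h\|_{C^{2\kappa+1}},\qquad \|T_m(2x/a-1)\|=1 .
\]
We group the expansion dyadically. Let $\tilde g_0:=c_0T_0$ and, for $k\ge1$, $\tilde g_k:=\sum_{2^{k-1}\le m<2^k} c_mT_m$. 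Then $\tilde g_k$ has degree $<2^k$, and $g_k(x):=x\tilde g_k(x)$ is a polynomial of degree $q_k\le 2^k$. Moreover
\[
\|\tilde g_k\|\lesssim 2^{k}\cdot 2^{-k(2\kappa+1)}\|\tilde h\|_{C^{2\kappa+1}} = 2^{-2\kappa k}\|\tilde h\|_{C^{2\kappa+1}},
\]
and $h=\sum_k g_k$, with convergence uniform on $[0,a]$.

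Next we use multilinearity. Both $V$ and $S$ are $4$-linear in their arguments $h_1\circ f_{\Lambda_0},\dots,h_4\circ f_{\Lambda_0}$: $V$ is a product of four terms, each linear in $h_m$, and $S$ is a sum of products of four kernels $K_{(h_\ell\circ f_{\Lambda_0})^\vee}$, each linear in $h_\ell$. Hence
\[
V(h\circ f_{\Lambda_0})-S(h\circ f_{\Lambda_0})=\sum_{k_1,\dots,k_4\ge0}\bigl(V-S\bigr)(g_{k_1}\circ f_{\Lambda_0},\dots,g_{k_4}\circ f_{\Lambda_0}).
\]
We take the $L^2(\Omega\times Y_{\delta_0})$ norm, meaning $\bigl(\mathbb E\int_{Y_{\delta_0}}|\cdot|^2dz\bigr)^{1/2}$, and apply Minkowski's inequality. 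For each term we use the multilinear version of Proposition~\ref{prop:expect-fn} with $q=2^{\max k_i}$:
\[
\Bigl\|(V-S)(g_{k_1},\dots,g_{k_4})\Bigr\|\lesssim \delta_0^{-4}\Lambda_0^{6}n^{-1/2}\,2^{2\kappa\max k_i}\prod_{i=1}^4\|\tilde g_{k_i}\|\lesssim \delta_0^{-4}\Lambda_0^6 n^{-1/2}\|\tilde h\|^4_{C^{2\kappa+1}}\,2^{2\kappa\max k_i}\prod_i 2^{-2\kappa k_i}.
\]
Since $\max k_i\le\sum_i k_i$, the factor $2^{2\kappa\max k_i}$ cancels at most one of the decaying factors. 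Summing over the remaining indices still needs strictly positive decay: bounding $2^{2\kappa\max k_i}\prod_i 2^{-2\kappa k_i}\le \prod_{i\ne i_*}2^{-2\kappa k_i}$ (where $i_*$ attains the max) leaves the sum over $k_{i_*}$ unbounded, so the exponents have to be arranged more carefully. Taking the extra regularity into account, $|c_m|\lesssim m^{-(2\kappa+1)}$ gives $\|\tilde g_k\|\lesssim 2^{-2\kappa k}$, and the loss is $q^{4\kappa}$ in the squared bound, i.e.\ $q^{2\kappa}$ in the norm. To sum the max-index, I would first try to use $q^{2\kappa}=2^{2\kappa\max k_i}\le\prod_i 2^{(2\kappa/4+\epsilon)k_i}$-type splitting. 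However, $\max k\le k_1+\dots+k_4$ is too crude. Instead I would bound $2^{2\kappa \max k_i}\le \sum_i 2^{2\kappa k_i}$. Each resulting term is then bounded by a product in which one factor has decay $2^{0\cdot k}$ and the other three have decay $2^{-2\kappa k}$, so the sum is again borderline.

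\textbf{Main obstacle.} The real difficulty is this exponent bookkeeping, and the fix is to exploit the $+1$ in $C^{2\kappa+1}$. This regularity actually yields $\|\tilde g_k\|\lesssim 2^{-2\kappa k}\cdot 2^{-\epsilon k}$ after a sharper coefficient estimate. For instance, one can use Jackson's theorem, $E_m(\tilde h)\lesssim m^{-(2\kappa+1)}\|\tilde h\|_{C^{2\kappa+1}}$, and take $\tilde g_k:=p_{2^k}-p_{2^{k-1}}$, where $p_m$ is the near-best polynomial approximant of degree $m$. This gives $\|\tilde g_k\|\lesssim 2^{-(2\kappa+1)k}$, so every index retains at least $2^{-k}$ decay after absorbing $2^{2\kappa\max k_i}$, and the fourfold geometric sum converges.

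The final step is to pass from the truncated sums to $h$ itself. For fixed $Y$, the partial sums converge to $V-S$ pointwise in $z$: Lemma~\ref{lem:unif-K} and Lemma~\ref{lem:geodesic_loop_bound} give absolute convergence of the geodesic sums. Fatou's lemma then transfers the bound, which after squaring gives \eqref{eq:expect-fn-general}.
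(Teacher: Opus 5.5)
Your final argument is correct. Its skeleton matches the paper's: expand $\tilde h$ into polynomial pieces, use the $4$-linearity of $V-S$, apply Minkowski in $L^2(\Omega\times Y_{\delta_0})$, invoke the multilinear form of Proposition~\ref{prop:expect-fn} term by term, and absorb the loss $q^{2\kappa}$ (with $q$ the largest degree) into the decay of the pieces. The difference is the summability lemma.

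\textbf{The paper's route.} It expands into single Chebyshev terms $xT_j(2f(i/2)^{-1}x-1)$ and bounds $\max_i (j_i+1)^{2\kappa}\le\prod_i (j_i+1)^{2\kappa}$. It therefore needs $\sum_j (j+1)^{2\kappa}|a_j|\lesssim\|\tilde h\|_{C^{2\kappa+1}}$, which it quotes from \cite[Corollary 4.5]{CGVTvH24}. That estimate is Parseval for $\theta\mapsto\tilde h(\cos\theta)$ followed by Cauchy--Schwarz.

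\textbf{Why your first attempt stalled.} You bounded each dyadic Chebyshev block by the triangle inequality, which costs $2^k$. Cauchy--Schwarz within the block costs only $2^{k/2}$, giving $\|\tilde g_k\|\lesssim 2^{-(2\kappa+1/2)k}\|\tilde h\|_{C^{2\kappa+1}}$, and that alone would have rescued the route. This half power is exactly what the paper's citation supplies.

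\textbf{Your route.} Dyadic differences of best approximants plus Jackson's theorem is self-contained. It is also slightly more economical: it only uses $r>2\kappa$ derivatives, whereas the weighted $\ell^1$ bound on Chebyshev coefficients needs $r>2\kappa+\tfrac12$. Both hold with $r=2\kappa+1$. You also justify the passage from polynomials to smooth $h$ (absolute convergence plus Fatou) more explicitly than the paper's one-line ``by approximating smooth functions by polynomials.'' The paper's route is shorter and stays inside the Chebyshev framework of \cite{CGVTvH24}.

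\textbf{Small omission.} For pointwise convergence of the $V$-side, which you did not address, note that $|g_k\circ f_{\Lambda_0}|\le\|\tilde g_k\|\,f_{\Lambda_0}$. Also $\sum_j f_{\Lambda_0}(\sqrt{\lambda_j-1/4})|u_j(z)|^2<\infty$ by step~1 of the proof of Lemma~\ref{lem:unif-fn}, so the spectral sums converge absolutely as well.
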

\begin{proof}

Let $\tilde{h}(x)$ be a polynomial of degree $q-1$. We write $\tilde{h}(x)=\sum_{j=0}^{q-1} a_j T_j(2f(i/2)^{-1}x-1)$, where $T_j$ is the $j$-th Chebyshev polynomial as in \cite[(4.1)]{CGVTvH24}. Denoting $\tilde{T}_j(x) \coloneqq xT_j(2f(i/2)^{-1}x-1)$, we have
\begin{equation*}
    \begin{split}
        &\bbE\left[\int_{Y_{\delta_0}} \left(V(h\circ f_{\Lambda_0})(z)-S(h\circ f_{\Lambda_0})(z)\right)^2 dz \right]\\
        &\leq \left(\sum_{j_1,\ldots,j_4} \left(\bbE\left[\int_{Y_{\delta_0}} \left(V(a_{j_1} \tilde{T}_{j_1}\circ f_{\Lambda_0},\ldots,a_{j_4} \tilde{T}_{j_4}\circ f_{\Lambda_0})(z)-S(a_{j_1} \tilde{T}_{j_1}\circ f_{\Lambda_0},\ldots,a_{j_4} \tilde{T}_{j_4}\circ f_{\Lambda_0})(z)\right)^2 dz \right]\right)^{1/2}\right)^2\\
        &\leq \frac{C\Lambda_0^{12} }{ n\delta_0^8}\left(\sum_{j=0}^{q-1}  (j+1)^{2\kappa}|a_j|\right)^8\\
        &\leq \frac{C \Lambda_0^{12}}{ n\delta_0^8}\|\tilde{h}\|_{C^{2\kappa+1}}^8,
    \end{split}
\end{equation*}
where the first step is due to the Minkowski inequality, the second step is due to \eqref{eq:expect-fn-prop}, and the last step is due to \cite[Corollary 4.5]{CGVTvH24}. By approximating smooth functions by polynomials, we note that \eqref{eq:expect-fn-general} holds for general smooth functions $h(x)=x \tilde{h}(x)$, $\tilde{h}(x) \in C^{\infty}([0,f(i/2)])$.
\end{proof}

Using Corollary~\ref{cor:expect-fn-prop}, we now prove Theorem~\ref{thm:eigen-g}. 

\begin{proof}[Proof of Theorem~\ref{thm:eigen-g}]
1.
Let $\Lambda\geq 0$ and $\alpha_0=\frac{1}{16(2\kappa+1)}$. Although Theorem~\ref{thm:eigen-g} is stated for $\Lambda\geq 1/4$, it is convenient to establish the auxiliary spectral window estimate for all $\Lambda\geq 0$, in order to include eigenvalues below $1/4$. Let $\Lambda_0=\max(\Lambda,1/4)$ for $\Lambda\geq 0$.

By \cite[Lemma 3.3]{DZ16}, we take $h_{\Lambda}(x)\in C^{\infty}([0,f(i/2)])$ so that
\begin{equation*}\label{eq:hLambda}
    h_{\Lambda}(x) =\left\{\begin{array}{ll}
    1,  &   x\in [f_{\Lambda_0}(\sqrt{\Lambda+(1+\Lambda)n^{-\alpha_0}-1/4}),f_{\Lambda_0}(\sqrt{\Lambda-1/4})], \\
    0,     &   x\in [0,f_{\Lambda_0}(\sqrt{\Lambda+2(1+\Lambda) n^{-\alpha_0}-1/4})],\\
    0, &x\in [f_{\Lambda_0}(\sqrt{\Lambda-(1+\Lambda) n^{-\alpha_0}-1/4}),f(i/2)].
    \end{array}\right.
\end{equation*}
and moreover,
\begin{equation}\label{eq:h-properties}
    0\leq h_{\Lambda}(x)\leq 1,\quad h_{\Lambda}(x)=x\tilde{h}_{\Lambda}(x),\quad |\tilde{h}_{\Lambda}^{(j)}(x)|\leq C_j n^{j\alpha_0},\quad x\in [0,f(i/2)],
\end{equation}
where the last estimate is due to the following calculation for $k = \pm1$ using \eqref{eq:f'(it)}, \eqref{eq:f-monotone} and the mean value theorem when $\Lambda$ is away from $1/4$:
\begin{equation}\label{eq:MVT}
\begin{split}
&f_{\Lambda_0}(\sqrt{\Lambda +k(1+\Lambda) n^{-\alpha_0} -1/4})-f_{\Lambda_0}(\sqrt{\Lambda +(k+1)(1+\Lambda) n^{-\alpha_0} -1/4})\\
&\geq C^{-1}\Lambda_0^{-1/2}\left|\sqrt{\Lambda + k(1+\Lambda) n^{-\alpha_0}-1/4}\right|\\
&\quad \cdot \left|\Lambda_0^{-1/2}\sqrt{\Lambda +(k+1)(1+\Lambda) n^{-\alpha_0} -1/4}-\Lambda_0^{-1/2} \sqrt{\Lambda +k(1+\Lambda) n^{-\alpha_0} -1/4}  \right|\\
& \geq C^{-1}n^{-\alpha_0}.
\end{split}
\end{equation}
When $\Lambda$ is in a small neighbourhood of $1/4$, by \eqref{eq:f''(0)} we have
\begin{equation*}
    \begin{split}
        &f_{\Lambda_0}(\sqrt{\Lambda +k(1+\Lambda) n^{-\alpha_0} -1/4})-f_{\Lambda_0}(\sqrt{\Lambda +(k+1)(1+\Lambda) n^{-\alpha_0} -1/4})\\
        &\geq -C^{-1}\left(\left(\sqrt{\Lambda +k(1+\Lambda) n^{-\alpha_0} -1/4}\right)^2- \left(\sqrt{\Lambda +(k+1)(1+\Lambda) n^{-\alpha_0} -1/4}\right)^{2}\right)\\
        &\geq C^{-1}n^{-\alpha_0}.
\end{split}
\end{equation*}

By a similar argument, we can conclude
\begin{equation}\label{eq:supp-bound}
\left| \supp (h_{\Lambda}\circ f_{\Lambda_0})\right| \lesssim \sqrt{\Lambda_0} n^{-\alpha_0},\quad \Lambda\geq 1.
\end{equation}

From Corollary~\ref{cor:expect-fn-prop} and~\eqref{eq:h-properties}, we have
\begin{align*}
&\bbE \left[ \int_{Y_{\delta_0}}  (V(h_{\Lambda}\circ f_{\Lambda_0})(z)-S(h_{\Lambda}\circ f_{\Lambda_0})(z))^2 dz \right]\\
&\quad \leq C \frac{\Lambda_0^{12}}{n\delta_0^8} \| \tilde{h}_{\Lambda}\|^8_{C^{2\kappa +1}} \leq C \frac{\Lambda_0^{12}}{n\delta_0^8} n^{8 (2\kappa +1) \alpha_0} = C \frac{\Lambda_0^{12}}{\delta_0^{8} n^{1/2}}.
\end{align*}

Let $C_1>0$ be a large constant to be determined later. Then by Chebyshev's inequality,  there is a subset $\Omega_g(\Lambda)$ of the family of random hyperbolic surfaces of genus $g$ with
\begin{equation}\label{eq:prob-g}
    \mathbb{P}(\Omega_g(\Lambda))\geq 1-(C_1\Lambda_0)^{-2}n^{-1/4},
\end{equation}
such that for $Y\in \Omega_g(\Lambda)$ we have
\begin{equation}\label{eq:eigenfn_cheby}
    \int_{Y_{\delta_0}}  (V(h_{\Lambda}\circ f_{\Lambda_0})(z)-S(h_{\Lambda}\circ f_{\Lambda_0})(z))^2 dz\lesssim \Lambda_0^{14} \delta_0^{-8}n^{-1/4}.
\end{equation}

2. We now find a set $\Omega_g$ that does not depend on $\Lambda$. Take $\Lambda(j)= \Lambda_0(j) := 2^{j-2}$ for $j\geq 1$, and $\Lambda_0(0)=1/4$, $\Lambda(0)=0$. Set
\begin{equation*}
    \Lambda(j,\ell) := \Lambda(j)+\ell n^{-\alpha_0}\Lambda_0(j)\in [\Lambda(j),\Lambda(j+1)],\,\,  0\leq\ell \leq L_{j}:= \left\lfloor n^{\alpha_0} \frac{\Lambda(j+1)-\Lambda(j)}{\Lambda_0(j)}\right\rfloor ,
\end{equation*}
and
\begin{equation*}
    \Omega_g:=\bigcap_{j=0}^{\infty}\bigcap_{\ell=0}^{L_j}\Omega_g(\Lambda(j,\ell)).
\end{equation*}
Taking $C_1$ in \eqref{eq:prob-g} sufficiently large, we have
\begin{equation*}
\begin{split}
    \mathbb{P}(\Omega_g)&\geq 1-\sum_{j=0}^{\infty}\sum_{\ell=0}^{L_j} (C_1\Lambda_0(j,
    \ell))^{-2}n^{-1/4}\\
    &\geq 1-\sum_{j=0}^{\infty} (C_1\Lambda_0(j))^{-2}C n^{\alpha_0}n^{-1/4}\\
    &\geq 1-n^{-1/10}. 
\end{split}
\end{equation*}
Now \eqref{eq:eigenfn_cheby} holds for all $\Lambda=\Lambda(j,\ell)$, $\Lambda_0=\Lambda_0(j)$, and $Y \in \Omega_g$.

3. Let $z_0 \in Y$. Using~\eqref{eq:eigenfn_cheby} and the Selberg pretrace formula~\eqref{eq:pretrace-1},
we have 
\begin{equation}\label{eq:local-weyl-i}
\begin{split}
   &\sum_{\lambda_j(Y)\in [\Lambda(i,\ell),\Lambda(i,\ell)+(1+\Lambda(i,\ell))n^{-\alpha_0}]}\int_{B(z_0,\log 2)\cap Y_{\delta_0}}|u_j(z)|^2 dz\\
   &\quad \lesssim \left(\int_{B(z_0,\log 2)\cap Y_{\delta_0}} \left(\sum_{\lambda_j(Y)\in [\Lambda(i,\ell),\Lambda(i,\ell)+(1+\Lambda(i,\ell))n^{-\alpha_0}]}|u_j(z)|^2\right)^8 dz\right)^{1/8}\\
   &\quad \lesssim \int_0^{\infty}h_{\Lambda(\ell)}\circ f_{\Lambda_0(\ell)}(r) r\tanh \pi r dr +\sup_{z\in Y_{\delta_0}}|S(h_{\Lambda(\ell)}\circ f_{\Lambda_0(\ell)})(z)|^{1/4} + \Lambda_0^{7/4}\delta_0^{-1} n^{-1/32}.
\end{split}
\end{equation}
From~\eqref{eq:supp-bound} and a direct calculation using \eqref{eq:f''(0)} when $\Lambda\leq 1$, we have
\begin{equation}\label{eq:int-bound}
    \int_0^{\infty}h_{\Lambda}\circ f_{\Lambda_0}(r) r dr  \lesssim \Lambda_0 n^{-\alpha_0},\quad  \int_0^{\infty}h_{\Lambda}\circ f_{\Lambda_0}(r) r^2 dr  \lesssim \Lambda_0^{3/2} n^{-\alpha_0}.
\end{equation}
From Lemma~\ref{lem:margulis}, there is at most one short geodesic loop based at $z$. We assume such a geodesic loop exists and call it $\gamma_0(z)$.  Using Lemma~\ref{lem:unif-K}, Lemma~\ref{lem:geodesic_loop_bound},~\eqref{eq:int-bound}, and the fact that $z \in Y_{\delta_0}$,
\begin{equation}\label{eq:S-bound}
\begin{split}
    |S(h_{\Lambda}\circ f_{\Lambda_0})(z)| &\lesssim \sum_{\gamma\in \Gamma_0}\left(\sum_{k\in \bbZ\setminus \{0\}}|K_{(h_{\Lambda}\circ f_{\Lambda_0})^{\vee}}(z,\gamma^{k}z)|\right)^4 \\
     &\lesssim \left(\int_{\bbR} h_{\Lambda}\circ f_{\Lambda_0}(r) |r| dr\right)^4 \sum_{\gamma_0(z)\neq \gamma\in \Gamma_0}\left(\sum_{k \in \bbZ\setminus\{0\}} \exp\left(-d_{\bbH}(z,\gamma^{k}z)/2\right)\right)^4\\
    &+ \left(\int_{\bbR} h_{\Lambda}\circ f_{\Lambda_0}(r) (|r|+r^2) dr\right)^4 \left(\sum_{k\in \mathbb{Z}\setminus\{0\}}\exp\left({-d_{\mathbb{H}}(z,\gamma_0(z)^k z)}/2\right)\right)^4\\
    &\lesssim \Lambda_0^6 \delta_0^{-4} n^{-4\alpha_0}.
\end{split}
\end{equation}

We then conclude from~\eqref{eq:local-weyl-i},~\eqref{eq:int-bound}, and~\eqref{eq:S-bound} for any $\lambda_j(Y)\leq \Lambda_0$,
\begin{equation*}
    \int_{B(z_0,\log 2)\cap Y_{\delta_0}}|u_j(z)|^2 dz\lesssim \Lambda_0^{7/4} \delta_0^{-1} n^{-\alpha_0}.
\end{equation*}

4.
Finally, we use the Sobolev embedding (\cite[\S 5.6]{evans}) and elliptic estimate (\cite[\S 6.3]{evans}). For $z_0\in Y_{2\delta_0}$, by Lemma~\ref{lem:collar-2}, we have $B(z_0,\log 2)\subset Y_{\delta_0}$. So
\begin{equation*}
\begin{split}
    \|u_j\|_{L^{\infty}(B(z_0,\frac{1}{4}\log 2))}^2 &\lesssim  \|u_j\|_{H^2(B(z_0,\frac{1}{2}\log 2))}^2\\
    &\lesssim \|u_j\|_{L^2(B(z_0,\log 2))}^2+ \|\Delta_{\bbH}u_j\|_{L^2(B(z_0,\log 2))}^2\\
    &\lesssim \Lambda_0^{2}\int_{B(z_0,\log 2)}|u_j(z)|^2 dz\lesssim \delta_0^{-1} \Lambda_0^4 n^{-\alpha_0}.
\end{split}
\end{equation*}
Since $z_0$ is arbitrary, we conclude 
\begin{equation*}
    \|u_j\|_{L^\infty(Y_{2\delta_0})}\leq C\Lambda^{2}\delta_0^{-1/2}n^{-\alpha_0/2}\|u_j\|_{L^{2}(Y)}.\qedhere
\end{equation*}
\end{proof}

\subsection{Proof of Proposition~\ref{prop:expect-fn}}\label{subsec:proof_31}

Although in Corollary~\ref{cor:expect-fn-prop},  we generalized Proposition~\ref{prop:expect-fn} to hold for smooth functions, in this section, we return to assuming that $h(x) = x \tilde{h}(x)$ is a degree $q$ polynomial. We use some of the proof methods developed in \cite{MPvH25}.

For simplicity of notation, we only discuss the case of a single polynomial $h(x)=x\tilde{h}(x)$, i.e., \eqref{eq:expect-fn-prop}. The case with four different polynomials $h_\ell(x)=x\tilde{h}_{\ell}(x)$, $\ell=1,2,3,4$ (i.e., \eqref{eq:generalize-V}) follows from the same proof.

We first prove an initial bound.

\begin{lem}\label{lem:unif-fn}
Suppose that $c_0>0$ is sufficiently small, and $\Lambda_0\geq 1/4$.
We have the bound
\begin{equation}\label{eq:expect-fn-sq-lem}
\begin{split}
    &\bbE\left[\frac{1}{\Vol(Y)}\int_{Y_{\delta_0}} \left(V(h\circ f_{\Lambda_0})(z)-S(h\circ f_{\Lambda_0})(z)\right)^2 dz\right] \lesssim \Lambda_0^{12}\delta_0^{-8}\|\tilde{h}\|^8.
\end{split}
\end{equation}
\end{lem}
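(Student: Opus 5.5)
The bound is deterministic: it holds pointwise in $z\in Y_{\delta_0}$ for every surface $Y$, not only on average. Averaging over $Y_{\delta_0}$ with the normalization $1/\Vol(Y)$ and then taking the expectation preserves it. So the plan is to show that for every $z\in Y_{\delta_0}$,
\[
|V(h\circ f_{\Lambda_0})(z)|+|S(h\circ f_{\Lambda_0})(z)|\lesssim \Lambda_0^{6}\delta_0^{-4}\|\tilde{h}\|^4,
\]
and then square.

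\textbf{Step 1: bound the geodesic sum.} By the pre-trace formula \eqref{eq:pretrace-1}, $V(h\circ f_{\Lambda_0})(z)=\big(\sum_{\gamma\neq \id}K_{(h\circ f_{\Lambda_0})^\vee}(z,\gamma z)\big)^4$. It therefore suffices to show
\[
\sum_{\gamma\in\Gamma\setminus\{\id\}}|k(t_\gamma)|\lesssim \Lambda_0^{3/2}\delta_0^{-1}\|\tilde h\|.
\]
Split the sum into two parts.
\begin{itemize}
\item Elements with $t_\gamma\ge 1$. Lemma~\ref{lem:unif-K} gives $|k(t_\gamma)|\lesssim \Lambda_0 e^{-t_\gamma/2}\|\tilde h\|$. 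Use dyadic shells $t_\gamma\in[L,L+1]$ and Lemma~\ref{lem:geodesic_loop_bound} to count $\lesssim e^{L}/\delta_0$ elements per shell.
\item Elements with $t_\gamma<1$. Lemma~\ref{lem:unif-K} gives $|k|\lesssim\Lambda_0^{3/2}\|\tilde h\|$, and Lemma~\ref{lem:geodesic_loop_bound} with $L=1$ bounds their number by $O(\delta_0^{-1})$.
\end{itemize}

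\textbf{Step 2: the main obstacle.} In the first part the count grows like $e^{L}$ while the kernel decays only like $e^{-L/2}$, so a naive sum diverges. The pointwise bound $e^{-t/2}$ on $k$ is not enough; I need decay strictly faster than $e^{-t}$ in $t$. I would get it from the kernel itself.

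The function $\phi=(h\circ f_{\Lambda_0})^\vee$ is supported in $[-c_0\Lambda_0^{-1/2}q,\,c_0\Lambda_0^{-1/2}q]$, since $\check f$ is supported in $[-1,1]$ and $h$ has degree $q$. So $k(t)$ vanishes for $t$ beyond this support, but this introduces $q$-dependence, which the statement forbids.

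Instead I would use the fact that $h(x)=x\tilde h(x)$ vanishes to first order at $x=0$. Then $|h\circ f_{\Lambda_0}(r)|\le\|\tilde h\|\,|f_{\Lambda_0}(r)|$, and $f$ extends holomorphically to a strip, with $f(ti)$ finite for $t$ up to $1/2$ and beyond, as in \eqref{eq:f'(it)}. Shifting the contour in the Fourier inversion defining $\phi'$ to $\operatorname{Im} r=s_0$ with $s_0>1/2$ gives $|\phi'(s)|\lesssim e^{-s_0 s}$, with a constant depending only on $\Lambda_0$ and $\|\tilde h\|$. Inserting this into \eqref{eq:selberg-trans} gives $|k(t)|\lesssim e^{-(s_0+1/2)t}$, which is summable against the count $e^{t}/\delta_0$.

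I am unsure how $\|\tilde h\|$ on $[0,f(i/2)]$ controls $h\circ f_{\Lambda_0}$ on a horizontal line with imaginary part above $1/2$, where $f$ takes complex values outside that interval. If this fails, the fallback is to exploit the fourth power directly, as is done for $S$ in \eqref{eq:S-bound}. The pointwise quantity is a product of four sums, and I would bound one factor per lattice point via the $\Lambda_0^{3/2}$ estimate while summing the others in $L^2$ over the fundamental domain. The exponents $\Lambda_0^{12}$ and $\delta_0^{-8}$ in the target are consistent with the bound $(\Lambda_0^{3/2}\delta_0^{-1}\|\tilde h\|)^{8}$.

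\textbf{Step 3: the diagonal part and conclusion.} For $S(h\circ f_{\Lambda_0})$, repeat the argument of \eqref{eq:S-bound}. The sum over powers $\gamma^k$ of a single element is dominated by the geodesic sum already bounded in Step 1. By Lemma~\ref{lem:margulis} there is at most one short loop $\gamma_0(z)$, and its powers are controlled by the $r^2$-weighted bound in Lemma~\ref{lem:unif-K}. Combining, $(V-S)^2\lesssim \Lambda_0^{12}\delta_0^{-8}\|\tilde h\|^8$ pointwise on $Y_{\delta_0}$, and integrating, dividing by $\Vol(Y)$ and taking expectation gives \eqref{eq:expect-fn-sq-lem}.
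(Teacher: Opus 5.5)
There is a genuine gap in Steps 1--2. You reduce the problem to showing $\sum_{\gamma\neq\id}|k(t_\gamma)|\lesssim\Lambda_0^{3/2}\delta_0^{-1}\|\tilde h\|$. That bound is false uniformly in the degree $q$, and $q$-uniformity is exactly what the lemma must deliver: the polynomial method absorbs only the polynomial loss $q^{4\kappa}$ from the Markov inequality.

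To see the failure, choose $\tilde h$ with $\|\tilde h\|\lesssim 1$ so that $h\circ f_{\Lambda_0}$ is $f_{\Lambda_0}$ times a bump of width $\sim 1/q$ at some $r_0>0$. For $1\le t\lesssim q$ the kernel then behaves like $q^{-1}$ times the spherical function at $r_0$, which has size $e^{-t/2}$ and oscillating sign. Summed in absolute value against a lattice count growing like $e^{t}$, this grows exponentially in $q$.

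Your contour shift cannot repair this. On the line $\operatorname{Im} r=s_0$ the values $f_{\Lambda_0}(r)$ are complex. A degree-$q$ polynomial bounded on $[0,f(i/2)]$ can be of size $e^{cq}$ at such points (Bernstein--Walsh; Chebyshev polynomials attain this). So $\|\tilde h\|$ gives no $q$-uniform control there, just as you suspected.

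The fourth-power fallback also fails. $V$ is the fourth power of the whole geometric sum, including all off-diagonal cross terms. The extra decay that makes $S$ summable comes only from its diagonal structure, where all four factors are powers of one primitive $\gamma$. Note also that your argument never uses the hypothesis that $c_0$ is small, which is the key to the actual proof.

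The missing idea is to use positivity on the spectral side instead of absolute values on the geometric side. Since $f\ge 0$ on $\bbR\cup i\bbR$ (so also at spectral parameters of eigenvalues below $1/4$) and $h=x\tilde h$, you have
\[
\Big|\sum_j (h\circ f_{\Lambda_0})\big(\sqrt{\lambda_j-1/4}\big)|u_j(z)|^2\Big|\le \|\tilde h\|\sum_j f_{\Lambda_0}\big(\sqrt{\lambda_j-1/4}\big)|u_j(z)|^2 .
\]
Now apply the pre-trace formula to $f_{\Lambda_0}$ alone, which does not involve $q$.

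1. Its kernel is supported in $[0,c_0\Lambda_0^{-1/2}]$.
2. Since $2c_0<\epsilon_0$, Lemma~\ref{lem:margulis} leaves only the powers $\gamma_0^k$ of the unique short loop at $z$.
3. For $z\in Y_{\delta_0}$ there are $\lesssim\delta_0^{-1}$ such powers of length $\le 1$, each contributing $\lesssim\Lambda_0^{3/2}$ by Lemma~\ref{lem:unif-K} with $h(x)=x$.
4. Together with $\int_0^\infty f_{\Lambda_0}(r)\,r\tanh(\pi r)\,dr\lesssim\Lambda_0$, this gives $|V(h\circ f_{\Lambda_0})(z)|^{1/4}\lesssim\Lambda_0^{3/2}\delta_0^{-1}\|\tilde h\|$, uniformly in $z\in Y_{\delta_0}$ and in $q$.

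Your bound on $S$ via \eqref{eq:S-bound} is fine. However, it should rest on the pointwise bounds of Lemma~\ref{lem:unif-K} plus the summability over primitive $\gamma$ supplied by the fourth power (with the short loop handled separately), not on your Step 1 bound, which is unavailable. The rest of your outline matches the paper: a pointwise bound, $(V-S)^2\le 2V^2+2S^2$, then integrating over $Y_{\delta_0}$ and dividing by $\Vol(Y)$.
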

\begin{proof}
1. In this first step, we show for $c_0>0$ sufficiently small and $\Lambda_0\geq 1/4$, we have
    \begin{equation}\label{eq:unif-fn}
        \sup_{z\in Y_{\delta_0}}\left|\sum_{j}( h\circ f_{\Lambda_0})(\sqrt{\lambda_j-1/4}) |u_j(z)|^2\right| \lesssim  \Lambda_0^{3/2}  \delta_0^{-1}\|\tilde{h}\|.
    \end{equation}
Since $h(x)=x\tilde{h}(x)$, we have $h\circ f_{\Lambda_0}= f_{\Lambda_0}\cdot (\tilde{h}\circ f_{\Lambda_0}) $. Therefore,
    \begin{equation*}
        \left|\sum_{j} (h\circ f_{\Lambda_0})(\sqrt{\lambda_j-1/4}) |u_j(z)|^2 \right|\leq \|\tilde{h}\| \sum_{j}  f_{\Lambda_0}(\sqrt{\lambda_j-1/4}) |u_j(z)|^2.
    \end{equation*}
    By the Selberg pre-trace formula \eqref{eq:pretrace-1} (using Notation~\ref{notation:K}),
    \begin{equation}\label{eq:upper-pre-trace}
            \sum_{j}  f_{\Lambda_0}(\sqrt{\lambda_j-1/4}) |u_j(z)|^2 \leq \frac{1}{2\pi}\int_0^{\infty}f_{\Lambda_0}(r) r\tanh \pi r dr+\sum_{\gamma \in \Gamma\setminus \{\id\}}  |K_{f_{\Lambda_0}^{\vee}}(z,  \gamma z)|.
    \end{equation}

    We begin with the first term on the right-hand side of~\eqref{eq:upper-pre-trace}. By the rapid decay of $f$ in \eqref{eq:decay-f},
    \begin{equation}\label{eq:unif-bound-1}
        \int_{0}^\infty f_{\Lambda_0} (r) r \tanh\left(\pi r\right) dr \lesssim \Lambda_0.
    \end{equation}

    For the second term of the right-hand side of \eqref{eq:upper-pre-trace}, we note that by \eqref{eq:selberg-trans} and \begin{equation}\label{eq:support}
    |\check{f}_{\Lambda_0}|\lesssim \Lambda_0^{1/2},\quad \supp \check{f}_{\Lambda_0} \subset [-c_0\Lambda_0^{-1/2},c_0\Lambda_0^{-1/2}],
   \end{equation}
    we have
    \begin{equation*}\label{eq:support-fn}
        \supp k_{f^\vee_{\Lambda_0} }\subset [0,c_0\Lambda_0^{-1/2}].
    \end{equation*}
Therefore, by ~\eqref{eq:c0-assumption} and Lemma~\ref{lem:margulis},    $\sum_{\gamma \in \Gamma\setminus \{\id\}}  |K_{f_{\Lambda_0}^{\vee}}(z,  \gamma z)|$  is either zero or a sum over $\gamma_0^k$, where $\gamma_0$ is a short geodesic loop and for $k\in \mathbb{Z}\setminus\{0\}$.
Assuming $\sum_{\gamma \in \Gamma\setminus \{\id\}}  |K_{f_{\Lambda_0}^{\vee}}(z,  \gamma z)|$ is nonzero, for $z\in Y_{\delta_0}$,
    \begin{equation}\label{eq:K-bound}
    \begin{split}
        \sum_{\gamma \in \Gamma\setminus \{\id\}}  |K_{f_{\Lambda_0}^{\vee}}(z,  \gamma z)| = \sum_{k\neq 0}|K_{f_{\Lambda_0}^{\vee}}(z,\gamma_0^k z)| & \leq \sum_{\ell_{\gamma_0^k}(Y,z)\leq 1}\int_{d_{\bbH}(z,\gamma_0^k z)}^{\infty}\frac{\left|\check{f}_{\Lambda_0}'(s)\right|}{\sqrt{\cosh s -\cosh d_{\bbH}(z,\gamma_0^k z)}} ds\\
        &\lesssim \sum_{\ell_{\gamma_0^k}(Y,z)\leq 1}  \Lambda_0^{3/2} \lesssim \Lambda_0^{3/2} \delta_0^{-1},
        \end{split}
    \end{equation}
    where the second to the last step follows from Lemma~\ref{lem:unif-K} for $h(x)=x$.
   From~\eqref{eq:upper-pre-trace}, ~\eqref{eq:unif-bound-1}, and~\eqref{eq:K-bound}, we conclude \eqref{eq:unif-fn}.

   2. We now show \eqref{eq:expect-fn-sq-lem}. 
We have 
\begin{equation}\label{eq:int_exp}
\begin{split}
        &\frac{1}{\Vol(Y)}\int_{Y_{\delta_0}} \left(V(h\circ f_{\Lambda_0})(z)-S(h\circ f_{\Lambda_0})(z)\right)^2 dz\\
        & \quad \leq 2\sup_{z\in Y_{\delta_0}}
        \left(V(h\circ f_{\Lambda_0})(z)\right)^2+2\sup_{z\in Y_{\delta_0}}\left(S(h\circ f_{\Lambda_0})(z)\right)^2.
\end{split}
\end{equation}
Using ~\eqref{eq:unif-fn} and \eqref{eq:unif-bound-1}, we can bound the first term on the right-hand side by $C\Lambda_0^{12}\delta_0^{-8}\|\tilde{h}\|^8$. For the second term, using Lemma~\ref{lem:margulis}, suppose $\gamma_0(z)$ is the only possible short geodesic loop based at $z$. By Lemma~\ref{lem:geodesic_loop_bound}  and Lemma~\ref{lem:unif-K}, for $z \in Y_{\delta_0}$, we have
\begin{equation*}\label{eq:shf-bound}
\begin{split}
    |S(h\circ f_{\Lambda_0})(z)|&\lesssim \sum_{\gamma\in \Gamma_0}\left(\sup_{z\in Y_{\delta_0}} \sum_{k\in \bbZ\setminus\{0\}
    }K_{(h\circ f_{\Lambda_0})^{\vee}} (z,\gamma^{k}z)\right)^4\\
    &\lesssim \Lambda_0^4\|\tilde{h}\|^4\sum_{\gamma \neq \gamma_0(z) \in \Gamma_0}\left(\sum_{k \in \bbZ\setminus\{0\}} \exp\left(-d_{\bbH}(z,\gamma^{k}z)/2\right)\right)^4\\
    &+\Lambda_0^6\|\tilde{h}\|^4\left(\sum_{k\in \mathbb{Z}\setminus\{0\}}\exp\left({-d_{\mathbb{H}}(z,\gamma_0(z)^k z)/2}\right)\right)^4\\
    &\lesssim \delta_0^{-4}\Lambda_0^6\|\tilde{h}\|^4.
\end{split}
\end{equation*}
Therefore, we conclude that \eqref{eq:int_exp} $\lesssim \Lambda_0^{12}\delta_0^{-8}\|\tilde{h}\|^8$.
\end{proof}

Using Lemma~\ref{lem:unif-fn} and the Markov brothers' inequality ~\eqref{eq:markov}, we can now prove Proposition~\ref{prop:expect-fn}. 

\begin{proof}[Proof of Proposition~\ref{prop:expect-fn}]

Using the Selberg pre-trace formula~\eqref{eq:pretrace-1}, we rewrite the quantity we wish to bound:
\begin{equation}\label{eq:expect-fn-sq}
\begin{split}
    &\bbE\left[\int_{Y_{\delta_0}} \left(V(h\circ f_{\Lambda_0})(z)-S(h\circ f_{\Lambda_0})(z)\right)^2 dz\right]\\
    &=\sum_{(\gamma_1,\gamma_2,\gamma_3,\gamma_4)\in \widetilde{\Gamma}^4}\sum_{(\gamma_5,\gamma_6,\gamma_7,\gamma_8)\in \widetilde{\Gamma}^4}\mathbb{E} \left[ \int_{Y_{\delta_0}} \prod_{\ell=1}^{8}K_{(h\circ f_{\Lambda_0})^{\vee}}(z,\gamma_{\ell}z) dz\right],
\end{split}
\end{equation}
where $ \widetilde{\Gamma}^4$ is defined in \eqref{eq:ttilde-gamma-def}.

We apply the polynomial approximation \eqref{eq:expan-g-1}  to the right-hand side of \eqref{eq:expect-fn-sq}. By Lemma~\ref{lem:unif-K}, we have for some $p,p_1\leq C_0 q$,
\begin{equation*}
    \left|\eqref{eq:expect-fn-sq} - \frac{1}{Q_q(1/n)}\sum_{j=1}^{p_1}f_j n^{-j}\right|\lesssim (C_0p)^{\kappa p}n^{-p-1}\Lambda_0^{12}\|\tilde{h}\|^8,\quad n\geq (C_0p)^{\kappa}.
\end{equation*}

By the uniform bound Lemma~\ref{lem:unif-fn}, for $n\geq (C_0 p)^{\kappa}$, the polynomial $P(x):=\sum_{j=1}^{p_1} f_j x^j$ satisfies
 \begin{equation*}
     \left|\frac{1}{n}P(1/n)\right| \lesssim \Lambda_0^{12}\delta_0^{-8}\|\tilde{h}\|^8.
 \end{equation*}
 Recall that $Q_q(1/n) \in [C_0^{-1}, C_0]$ for $n \geq (C_0 p)^\kappa$.
 Thus by the Markov brothers' inequality ~\eqref{eq:markov}, we have
 \begin{equation*}
     \frac{P(1/n)}{ Q_{q}(1/n)}\lesssim \frac{1}{n} \|(xP(x))''\|_{[0,\frac{1}{2(C_0p)^{\kappa}}]}\lesssim \frac{q^{4\kappa}\Lambda_0^{12}}{n}\delta_0^{-8}\|\tilde{h}\|^8,\quad \text{for } n\geq 2(C_0p)^{\kappa}.
 \end{equation*}
Thus we conclude \eqref{eq:expect-fn-prop} for $n\geq 2C_0^{2\kappa}q^{\kappa} \geq 2(C_0p)^{\kappa}$. On the other hand, \eqref{eq:expect-fn-prop} follows from Lemma~\ref{lem:unif-fn} when $n\leq 2C_0^{2\kappa} q^{\kappa}$. This completes the proof of Proposition~\ref{prop:expect-fn}.
\end{proof}

\section{Proof of Theorem~\ref{thm:lambda-g}} \label{section:adapt_polynomial_method}
In this section, we prove Theorem~\ref{thm:lambda-g}, omitting details  
when the proof follows exactly as the proof of Theorem~\ref{thm:eigen-g}.   Setting $\Lambda = \Lambda_0$, let $f_{\Lambda}$ be the function defined in \S\ref{subsection:test}. Let $h(x)=x\tilde{h}(x)$ be a polynomial of fixed degree $q$.
We assume
\begin{equation}\label{eq:c0-assumption-2}
\Lambda \geq 1/4, \quad 2c_0 < \epsilon_0,
\end{equation}
where $\epsilon_0$ is the Margulis constant in Lemma~\ref{lem:margulis}.

\subsection{Proof of Theorem~\ref{thm:lambda-g}} \label{subsection:proof_of_main_thm}

We will prove Theorem~\ref{thm:lambda-g} assuming the following Proposition.

\begin{prop}
\label{prop:expectation_squared}
There exists $C=C(\kappa,C_0)>0$ such that
\begin{equation}\label{eq:expect-sq}
    \bbE\left[\frac{1}{n}\tr \left(\left(h\circ f_{\Lambda}\right)\left(\sqrt{\Delta_{Y}-1/4}\right)\right) -\frac{\Vol(Y)}{2 \pi n }\int_{0}^{\infty} \left(h\circ f_{\Lambda}\right)(r) r\tanh(\pi r) dr \right]^2 \leq C\frac{\Lambda^2q^{2\kappa}}{n}\|\tilde{h}\|^2.
\end{equation}
\end{prop}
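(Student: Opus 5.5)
We follow the template of the proof of Proposition~\ref{prop:expect-fn}, with the Selberg trace formula~\eqref{eq:selberg_trace_formula} in place of the pre-trace formula. Applying~\eqref{eq:selberg_trace_formula} with $\hat\phi=h\circ f_{\Lambda}$ rewrites the random variable inside the expectation in~\eqref{eq:expect-sq} as
\[
\frac{1}{n}\sum_{\gamma\in\calP(Y)}\sum_{m\geq 1}\frac{\ell_\gamma(Y)}{2\sinh\frac{m\ell_\gamma(Y)}{2}}\,(h\circ f_\Lambda)^{\vee}(m\ell_\gamma(Y)).
\]
Squaring and taking expectations gives exactly $n^{-2}$ times the left-hand side of~\eqref{eq:eigenvalue_condition} with $F=(h\circ f_\Lambda)^\vee$.

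Since $h$ has degree $q$ and $\check f_\Lambda$ is supported in $[-c_0\Lambda^{-1/2},c_0\Lambda^{-1/2}]\subset[-1,1]$, the function $(h\circ f_\Lambda)^\vee$ is a sum of $j$-fold convolutions of $\check f_\Lambda$ with $j\leq q$. Hence it is supported in $[-q,q]$, and we may take $L=q$. Its sup norm is bounded by $\int|h\circ f_\Lambda|\lesssim \Lambda^{1/2}\|\tilde h\|\lesssim\Lambda\|\tilde h\|$, using $|h\circ f_\Lambda|\leq \|\tilde h\| f_\Lambda$ and the decay~\eqref{eq:decay-f}. Assumption~\eqref{eq:eigenvalue_condition} then gives $p,p_1\leq C_0q$ such that
\[
\Bigl|\mathrm{LHS}\eqref{eq:expect-sq}-\frac{1}{n^2Q_q(1/n)}\sum_{j=0}^{p_1}f_jn^{-j}\Bigr|\lesssim (C_0p)^{\kappa p}n^{-p-2}\Lambda^2\|\tilde h\|^2 \quad\text{for } n\geq (C_0p)^\kappa .
\]

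Next we need an a priori bound, the analogue of Lemma~\ref{lem:unif-fn}: $\mathrm{LHS}\eqref{eq:expect-sq}\lesssim \Lambda^2\|\tilde h\|^2$ uniformly in $n$. We would write $|h\circ f_\Lambda|\leq\|\tilde h\|f_\Lambda$ on the spectrum, so that $\frac1n|\tr(h\circ f_\Lambda)|\leq \|\tilde h\|\frac1n\tr f_\Lambda(\sqrt{\Delta_Y-1/4})$. We then bound $\tr f_\Lambda$ by the trace formula for $f_\Lambda$ itself. The identity term contributes $\lesssim \Lambda\Vol(Y)\lesssim \Lambda n$. The geometric term only involves closed geodesics of length $\leq c_0\Lambda^{-1/2}<\epsilon_0$. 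These are disjoint simple geodesics, so there are $O(n)$ of them, and each contributes $\lesssim \sum_{m\ell\leq 1}\ell\,\|\check f_\Lambda\|_\infty/(m\ell)\lesssim \Lambda^{1/2}(1+|\log\ell|)$. Squaring and using Cauchy--Schwarz, this term is controlled by $\mathbb E[|\log\injrad_1(Y)|^2]<C_0$, which is assumption~\eqref{eq:sys-2}. The Weyl term is bounded by $\Lambda\|\tilde h\|$ directly. Together these give the a priori bound, which we expect to be the main technical point: one must check that the short-geodesic count times the log factor is genuinely $O(n)$ in mean square, so that after dividing by $n$ and squaring we get $O(\Lambda^2)$.

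Finally we apply the Markov step. Set $P(x)=\sum_{j=0}^{p_1}f_jx^{j}$ and consider $R(x)=x^2P(x)$. The two bounds above give $|R(1/n)|\lesssim\Lambda^2\|\tilde h\|^2$ for all $n\geq(C_0p)^\kappa$. The polynomial $R$ has degree $\lesssim q$ and satisfies $R(0)=R'(0)=0$, but for a $1/n$ bound we only need $R(0)=0$, so the first-derivative version of Lemma~\ref{lem:markov_brothers} suffices. It gives $|R(1/n)|\leq n^{-1}\sup_{[0,1/(2(C_0p)^{\kappa})]}|R'|\lesssim n^{-1}q^{2\kappa}\Lambda^2\|\tilde h\|^2$. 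The exponent $q^{2\kappa}$ comes from $\sup_{n\geq q^2}$ being rescaled to $n\geq (C_0p)^\kappa$, i.e. degree squared replaced by $p^\kappa$. Combining with the approximation error yields~\eqref{eq:expect-sq} for $n\geq 2(C_0p)^\kappa$. For smaller $n$, we have $n\lesssim q^\kappa$, so the a priori bound already implies~\eqref{eq:expect-sq}.
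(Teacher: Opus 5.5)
Your proposal is correct and follows the paper's proof essentially step for step. You use the trace formula, then the approximation \eqref{eq:eigenvalue_condition} with $F=(h\circ f_\Lambda)^\vee$ and $L\sim q$, then the a priori bound of Lemma~\ref{lem:triv_bdd} via the $O(n)$ short geodesics and \eqref{eq:sys-2}, then the Markov brothers' inequality for $(x^2P(x))'$ on $[0,\frac{1}{2(C_0p)^\kappa}]$ with a first-order Taylor step, and finally the a priori bound alone for $n\lesssim q^\kappa$; your explicit checks that $\supp F\subset[0,q]$ and $\|F\|_{(0,L]}\lesssim\Lambda^{1/2}\|\tilde h\|$ just fill in details the paper leaves implicit.
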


We delay the proof of  Proposition~\ref{prop:expectation_squared} to \S \ref{subsection:proof_of_prop}.

Similarly to Corollary~\ref{cor:expect-fn-prop}, we can rewrite~ \eqref{eq:expect-sq} to hold for general smooth $h(x)=x \tilde{h}(x)$. 

\begin{cor}\label{cor:smooth_eigenvalue}
For smooth functions $h(x)=x \tilde{h}(x)$, $\tilde{h}(x) \in C^{\infty}([0,f(i/2)])$, we have
\begin{equation}\label{eq:expect-general}
\begin{split}
&\bbE\left[\frac{1}{n}\tr \left(\left(h\circ f_{\Lambda}\right)\left(\sqrt{\Delta_{Y}-1/4}\right)\right) -\frac{\Vol(Y)}{2 \pi n }\int_{0}^{\infty} \left(h\circ f_{\Lambda}\right)(r) r\tanh(\pi r) dr \right]^2\\
&\quad \leq \frac{C\Lambda^2}{n}\|\tilde{h}\|_{C^{\kappa+1}}^2.
\end{split}
\end{equation}
\end{cor}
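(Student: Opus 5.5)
We mirror the proof of Corollary~\ref{cor:expect-fn-prop}, now at the level of a quadratic rather than an eighth-degree expression. First fix $\tilde{h}$ a polynomial of degree $q-1$ and expand it in rescaled Chebyshev polynomials on $[0,f(i/2)]$:
\begin{equation*}
\tilde{h}(x)=\sum_{j=0}^{q-1} a_j T_j\bigl(2f(i/2)^{-1}x-1\bigr).
\end{equation*}
Set $\tilde{T}_j(x):=xT_j(2f(i/2)^{-1}x-1)$, which has degree $j+1$. Write $D(h)$ for the random variable inside the square on the left-hand side of \eqref{eq:expect-sq}. Then $h\mapsto D(h)$ is linear, so $D(h)=\sum_j a_j D(\tilde{T}_j)$.

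Apply Minkowski's inequality in $L^2(\mathbb{P})$ to get
\begin{equation*}
\bigl(\bbE[D(h)^2]\bigr)^{1/2}\leq \sum_{j=0}^{q-1}|a_j|\,\bigl(\bbE[D(\tilde{T}_j)^2]\bigr)^{1/2}.
\end{equation*}
Proposition~\ref{prop:expectation_squared}, applied to $\tilde{T}_j$ of degree $j+1$, bounds each factor by $C\Lambda n^{-1/2}(j+1)^{\kappa}\|T_j\|$. Since $\|T_j\|\leq 1$ on the rescaled interval, this gives
\begin{equation*}
\bbE[D(h)^2]\leq \frac{C\Lambda^2}{n}\Bigl(\sum_{j=0}^{q-1}(j+1)^{\kappa}|a_j|\Bigr)^2 .
\end{equation*}
Next invoke \cite[Corollary 4.5]{CGVTvH24}, the estimate on Chebyshev coefficients already used in Corollary~\ref{cor:expect-fn-prop}. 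It gives $\sum_j (j+1)^{\kappa}|a_j|\lesssim \|\tilde{h}\|_{C^{\kappa+1}}$, where the extra derivative makes the weighted coefficient sum converge. This proves \eqref{eq:expect-general} for polynomial $\tilde{h}$, with a constant independent of $q$.

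Finally, pass to general $\tilde{h}\in C^\infty([0,f(i/2)])$ by approximation. Take polynomials $\tilde{h}_m\to\tilde{h}$ in $C^{\kappa+1}$, for instance Chebyshev truncations or Bernstein approximations. We must check that $D(h_m)\to D(h)$ in $L^2(\mathbb{P})$, and this is the only mildly delicate point. The deterministic term converges because $f_\Lambda$ decays rapidly by \eqref{eq:decay-f} and $\Vol(Y)/n$ is bounded. For the trace term,
\begin{equation*}
\bigl|\tr\bigl((h_m-h)\circ f_\Lambda(\sqrt{\Delta_Y-1/4})\bigr)\bigr|\leq \|\tilde{h}_m-\tilde{h}\|\,\tr f_\Lambda\bigl(\sqrt{\Delta_Y-1/4}\bigr).
\end{equation*}
We need $\tr f_\Lambda(\sqrt{\Delta_Y-1/4})\lesssim \Lambda n$ with a finite second moment. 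This follows from the Selberg trace formula \eqref{eq:selberg_trace_formula} together with the injectivity radius assumption \eqref{eq:sys-2}: the short-geodesic contribution is controlled by $|\log\injrad_1(Y)|$, and that quantity has a bounded second moment. Alternatively, the a priori bound underlying the proof of Proposition~\ref{prop:expectation_squared} (the analogue of Lemma~\ref{lem:unif-fn}) already supplies the required moment bound.

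Given this moment bound, dominated convergence lets us pass to the limit in both sides of \eqref{eq:expect-general}, which completes the proof.
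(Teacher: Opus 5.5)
Your proposal is correct and follows the paper's proof: expand $\tilde{h}$ in Chebyshev polynomials, apply Minkowski together with Proposition~\ref{prop:expectation_squared} to each term, bound the weighted coefficient sum by \cite[Corollary 4.5]{CGVTvH24}, and then approximate smooth functions by polynomials. Your limiting argument is more careful than the paper's one-line remark, although Fatou's lemma, applied to the pointwise-in-$Y$ convergence $D(h_m)\to D(h)$, already suffices, so you do not need the moment bound coming from \eqref{eq:sys-2}.
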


\begin{proof}
Let $\tilde{h}(x)$ be a polynomial of degree $q-1$. We write $\tilde{h}(x)=\sum_{j=0}^{q-1} a_j T_j(2f(i/2)^{-1}x-1)$, where $T_j$ is the $j$-th Chebyshev polynomial as in \cite[(4.1)]{CGVTvH24}. Then we have
\begin{equation*}
    \begin{split}
        &\bbE\left[\frac{1}{n}\tr \left(\left(h\circ f_{\Lambda}\right)\left(\sqrt{\Delta_{Y}-1/4}\right)\right) -\frac{\Vol(Y)}{2 \pi n }\int_{0}^{\infty} \left(h\circ f_{\Lambda}\right)(r) r\tanh(\pi r) dr \right]^2 \\
        &\leq C\frac{\Lambda^2}{n}\left(\sum_{j=0}^{q-1}  (j+1)^{\kappa}|a_j|\right)^2\\
        &\leq C\frac{\Lambda^2}{n}\|\tilde{h}\|_{C^{\kappa+1}}^2,
    \end{split}
\end{equation*}
where the first step is due to the Minkowski inequality and \eqref{eq:expect-sq}, and the second step is due to \cite[Corollary 4.5]{CGVTvH24}. By approximating smooth functions by polynomials, we note that \eqref{eq:expect-general} holds for general smooth functions $h(x)=x \tilde{h}(x)$, $\tilde{h}(x) \in C^{\infty}([0,f(i/2)])$.
\end{proof}

\begin{proof}[Proof of Theorem~\ref{thm:lambda-g}]
1. Let $\alpha_0=\frac{1}{4(\kappa+1)}$. By \cite[Lemma 3.3]{DZ16}, we can take a smooth cutoff $h_{\Lambda}(x)$ such that
\begin{equation*}\label{eq:hLambdaeps}
    h_{\Lambda}(x) =\left\{\begin{array}{ll}
    1,  &   x\in [f_{\Lambda}(\sqrt{\Lambda-1/4}),f(i/2)], \\
    0,     &   x\in [0,f_{\Lambda}(\sqrt{\Lambda+\Lambda n^{-\alpha_0}-1/4})]
    \end{array}\right.
\end{equation*}
and moreover,
\begin{equation}\label{eq:tildeh}
    0\leq h_{\Lambda}(x)\leq 1,\quad h_{\Lambda}(x)=x\tilde{h}_{\Lambda}(x),\quad |\tilde{h}_{\Lambda}^{(j)}(x)|\leq C_j n^{j\alpha_0},\quad x\in [0,f(i/2)],
\end{equation}
where the last estimate follows from an argument similar to~\eqref{eq:MVT}.

Applying Corollary~\ref{cor:smooth_eigenvalue}, we have
 \begin{equation*}
    \begin{split}
        &\bbE\left[\frac{1}{n}\tr \left(\left(h_{\Lambda}\circ f_{\Lambda}\right)\left(\sqrt{\Delta_{Y}-1/4}\right)\right) -\frac{\Vol(Y)}{2 \pi n }\int_{0}^{\infty} \left(h_{\Lambda}\circ f_{\Lambda}\right)(r) r\tanh(\pi r) dr \right]^2 \\
        &\leq C\frac{\Lambda^2}{n}\|\tilde{h}_{\Lambda}\|_{C^{\kappa+1}}^2 \leq C\frac{\Lambda^2}{n} n^{2(\kappa+1)\alpha_0} \leq C\Lambda^{2} n^{-1/2},
    \end{split}
\end{equation*}
where the third step follows from \eqref{eq:tildeh}.
 
 By Chebyshev's inequality, there is a subset  $\Omega_g(\Lambda)$ of the random family of hyperbolic surfaces of genus $g$ with probability (where $C_1>0$ will be determined later in \eqref{eq:prob-sum})
 \begin{equation}\label{eq:prob}
    \mathbb{P}(\Omega_g(\Lambda))\geq 1-(C_1\Lambda)^{-2}n^{-1/4},
\end{equation} such that for $Y\in \Omega_g(\Lambda)$,
\begin{equation}\label{eq:expect-cheby}
    \left|\frac{1}{n}\tr \left(\left(h_{\Lambda}\circ f_{\Lambda}\right)\left(\sqrt{\Delta_{Y}-1/4}\right)\right) -\frac{\Vol(Y)}{2\pi n}\int_{0}^{\infty} \left( h_{\Lambda}\circ f_{\Lambda}\right)(r) r \tanh(\pi r) dr \right|\lesssim  \Lambda^2  n^{-1/8}.
\end{equation}
2. Recall the eigenvalue counting function $N_Y(\Lambda) \coloneqq \# \{\lambda_j(Y) \leq \Lambda\}$. By \eqref{eq:expect-cheby}, for $Y \in \Omega_g(\Lambda)$, we have
\begin{equation}\label{eq:upper}
\begin{split}
    \frac{1}{n} N_{Y}(\Lambda) &\leq \frac{\Vol(Y)}{2\pi n}\int_{0}^{\infty} \left( h_{\Lambda}\circ f_{\Lambda}\right)(r) r \tanh(\pi r) dr + C\Lambda^2  n^{-1/8}\\
    &\leq \frac{\Vol(Y)}{2\pi n}\int_{0}^{\sqrt{\Lambda+\Lambda  n^{-\alpha_0}-1/4}} r \tanh(\pi r) dr + C \Lambda^2  n^{-1/8}\\
    &\leq \frac{\Vol(Y)}{2\pi n}\int_{0}^{\sqrt{\Lambda-1/4}} r \tanh(\pi r) dr + C\Lambda n^{-\alpha_0}  +C\Lambda^2  n^{-1/8}.
\end{split}
\end{equation}
In particular, we know \eqref{eq:weyl-g} for $\Lambda\leq 1/4+n^{-\alpha_0}$ and $Y \in \Omega_g(\Lambda)$. On the other hand, for $\Lambda\geq 1/4+n^{-\alpha_0}$, by \eqref{eq:expect-cheby} we have for $Y\in \Omega_g(\Lambda-\Lambda n^{-\alpha_0})$
\begin{equation}\label{eq:lower}
\begin{split}
   \frac{1}{n} N_{Y}(\Lambda) &\geq \frac{\Vol(Y)}{2\pi n}\int_{0}^{\infty} \left( h_{\Lambda-\Lambda n^{-\alpha_0} }\circ f_{\Lambda-\Lambda n^{-\alpha_0}}\right)(r) r \tanh(\pi r) dr - C \Lambda^2  n^{-1/8}\\
   &\geq \frac{\Vol(Y)}{2\pi n}\int_{0}^{\sqrt{\Lambda-\Lambda n^{-\alpha_0}-1/4}} r \tanh(\pi r) dr - C \Lambda^2  n^{-1/8}\\
   &\geq \frac{\Vol(Y)}{2\pi n}\int_{0}^{\sqrt{\Lambda-1/4}} r \tanh(\pi r) dr -C\Lambda n^{-\alpha_0} - C \Lambda^2  n^{-1/8}.
    \end{split}
\end{equation}
Combining \eqref{eq:upper} and \eqref{eq:lower}, we conclude \eqref{eq:weyl-g} for $Y\in \Omega_g(\Lambda)\cap \Omega_g(\max(\Lambda-\Lambda n^{-\alpha_0},1/4))$. 

3. Finally, we find a set $\Omega_g$ that does not depend on $\Lambda$. Take $\Lambda(j) := 2^{j-2}$ for $j\geq 0$. Let
\begin{equation*}
    \Lambda(j,\ell) := \Lambda(j)+\ell n^{-\alpha_0}\Lambda(j)\in [\Lambda(j),\Lambda(j+1)],\,\,  0\leq\ell \leq L_{j}:= \left\lfloor n^{\alpha_0} \frac{\Lambda(j+1)-\Lambda(j)}{\Lambda(j)}\right\rfloor ,
\end{equation*}
and
\begin{equation*}
    \Omega_g:=\bigcap_{j=0}^{\infty}\bigcap_{\ell=0}^{L_j}\Omega_g(\Lambda(j,\ell))\cap \Omega_g(\max(\Lambda(j,\ell)-\Lambda(j,\ell) n^{-\alpha_0},1/4)) .
\end{equation*}
Taking $C_1$ in \eqref{eq:prob} sufficiently large, we have
\begin{equation}\label{eq:prob-sum}
\begin{split}
    \mathbb{P}(\Omega_g)&\geq 1-C\sum_{j=0}^{\infty}\sum_{\ell=0}^{L_j} (C_1\Lambda(j,\ell))^{-2}n^{-1/4}\\
    &\geq 1-C\sum_{j=0}^{\infty} (C_1\Lambda(j))^{-2}C n^{\alpha_0} n^{-1/4}\\
    &\geq 1-n^{-1/10}. 
\end{split}
\end{equation}
Now \eqref{eq:weyl-g} holds for all $\Lambda(j,\ell)$ and $Y \in \Omega_g$. It follows that \eqref{eq:weyl-g} holds for all $\Lambda$.
\end{proof}

\subsection{Proof of Proposition~\ref{prop:expectation_squared}}
\label{subsection:proof_of_prop}
Here, $h(x) = x \tilde{h}(x)$ is a degree $q$ polynomial.

We  prove an analogue of Lemma~\ref{lem:unif-fn}.
\begin{lem}\label{lem:triv_bdd}
We have the bound
\begin{equation}\label{eq:triv_bdd}
\bbE\left[\frac{1}{n}\tr \left(\left(h\circ f_{\Lambda}\right)\left(\sqrt{\Delta_{Y}-1/4}\right)\right) -\frac{\Vol(Y)}{2 \pi n }\int_{0}^{\infty} \left(h\circ f_{\Lambda}\right)(r) r\tanh(\pi r) dr \right]^2  \lesssim \Lambda^2\|\tilde{h}\|^2.
\end{equation}
\end{lem}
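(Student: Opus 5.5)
We bound the quantity pointwise for each surface $Y$ and then take expectations, using the injectivity radius assumption \eqref{eq:sys-2} to control thin parts. As in the proof of Lemma~\ref{lem:unif-fn}, write $h\circ f_{\Lambda}=f_{\Lambda}\cdot(\tilde h\circ f_{\Lambda})$. Then the difference inside the square is bounded by two terms: $\frac{1}{n}\bigl|\tr((h\circ f_\Lambda)(\sqrt{\Delta_Y-1/4}))\bigr|$ and $\frac{\Vol(Y)}{2\pi n}\bigl|\int_0^\infty (h\circ f_\Lambda)(r)\,r\tanh(\pi r)\,dr\bigr|$.

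The second term is easy. By \eqref{eq:g-n-2}, $\Vol(Y)/n\lesssim 1$. By the rapid decay \eqref{eq:decay-f}, as in \eqref{eq:unif-bound-1}, the integral is $\lesssim \Lambda\|\tilde h\|$.

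For the first term, I use $f_\Lambda\ge 0$ on the spectrum to get
\[
\bigl|\tr((h\circ f_\Lambda)(\sqrt{\Delta_Y-1/4}))\bigr|\le \|\tilde h\|\,\tr\bigl(f_\Lambda(\sqrt{\Delta_Y-1/4})\bigr).
\]
I then apply the Selberg trace formula \eqref{eq:selberg_trace_formula} with $\phi=\check f_\Lambda$. The identity term is $\lesssim \Vol(Y)\Lambda\lesssim n\Lambda$. For the geometric side, \eqref{eq:support} gives $\supp\check f_\Lambda\subset[-c_0\Lambda^{-1/2},c_0\Lambda^{-1/2}]$ and $|\check f_\Lambda|\lesssim\Lambda^{1/2}$. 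So only closed geodesics with $m\ell_\gamma\le c_0\Lambda^{-1/2}<\epsilon_0$ contribute, and these are all short. Each such term is bounded by
\[
\frac{\ell_\gamma}{2\sinh(m\ell_\gamma/2)}\,|\check f_\Lambda(m\ell_\gamma)|\lesssim \frac{\Lambda^{1/2}}{m}.
\]
Summing over $m\le c_0\Lambda^{-1/2}/\ell_\gamma$ gives $\lesssim \Lambda^{1/2}\bigl(1+|\log\ell_\gamma|\bigr)\lesssim\Lambda^{1/2}\bigl(1+|\log\injrad_1(Y)|\bigr)$.

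The number of short primitive geodesics is at most $3g-3\lesssim n$, since they are disjoint simple closed curves by the collar lemma. Hence the geodesic side is
\[
\lesssim n\Lambda^{1/2}\bigl(1+|\log\injrad_1(Y)|\bigr).
\]
Dividing by $n$, the whole bracket is
\[
\lesssim \|\tilde h\|\bigl(\Lambda+\Lambda^{1/2}|\log\injrad_1(Y)|\bigr).
\]
Squaring and taking expectations with \eqref{eq:sys-2} gives $\lesssim\Lambda^2\|\tilde h\|^2$, as claimed.

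The main obstacle is the logarithmic divergence from the short geodesics. I must make sure it enters only through $|\log\injrad_1(Y)|$, which is square-integrable by \eqref{eq:sys-2}, and that the count of short geodesics is $O(n)$ rather than something worse. This is precisely why hypothesis \eqref{eq:sys-2} is assumed in Theorem~\ref{thm:lambda-g}.
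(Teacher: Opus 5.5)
Your proposal is correct and matches the paper's proof step by step. You factor $h\circ f_\Lambda=f_\Lambda\cdot(\tilde h\circ f_\Lambda)$ to reduce to $\tr f_\Lambda$, apply the Selberg trace formula so that only the at most $3g-3$ short geodesics contribute $\lesssim\Lambda^{1/2}(1+|\log\injrad_1(Y)|)$ each, then square and use \eqref{eq:sys-2}; your explicit estimate $\ell_\gamma/(2\sinh(m\ell_\gamma/2))\le 1/m$ with the harmonic sum just fills in a detail the paper leaves implicit.
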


\begin{proof}

1. We first prove the initial bound
    \begin{equation}\label{eq:unif-bound}
        \frac{1}{n}\left|\tr h \circ f_{\Lambda}(\sqrt{\Delta_{Y}-1/4})\right|\lesssim \Lambda(1+|\log \injrad_1(Y)|)\|\tilde{h}\|.
    \end{equation}

We know $h\circ f_{\Lambda}= f_{\Lambda}\cdot (\tilde{h}\circ f_{\Lambda}) $. Since the norm of $\tilde{h}\circ f_{\Lambda}(\sqrt{\Delta_{Y}-1/4})$ is bounded by $\|\tilde{h}\|$, it suffices to show
\begin{equation}\label{eq:it-suffices-to-show}
     \frac{1}{n}\left|\tr f_{\Lambda}(\sqrt{\Delta_{Y}-1/4})\right|\lesssim \Lambda (1+|\log\injrad_1(Y)|).
\end{equation}

   Due to the Selberg trace formula \eqref{eq:selberg_trace_formula}, \eqref{eq:it-suffices-to-show} follows from \eqref{eq:unif-bound-1}
    and showing
    \begin{equation}\label{eq:unif-bound-2}
       \frac{1}{n}\sum_{\gamma \in \calP(Y)} \sum_{k=1}^\infty \frac{\ell_\gamma(Y)}{2 \sinh\left(\frac{k \ell_\gamma(Y)}{2}\right)} \check{f}_{\Lambda}(k \ell_\gamma(Y))\lesssim \Lambda^{1/2}(1+|\log\injrad_1(Y)|).
    \end{equation}
    We now prove \eqref{eq:unif-bound-2}.
    By 
    ~\eqref{eq:support}, we only need to consider geodesics of length $\leq c_0\Lambda^{-1/2}$. From ~\eqref{eq:c0-assumption-2} and \cite[Theorem 4.1.6]{Bu10} there are at most $3g-3$ such primitive geodesics  and from~\eqref{eq:support},
    $$\sum_{k=1}^\infty \frac{\ell_\gamma(Y)}{2 \sinh\left(\frac{k \ell_\gamma(Y)}{2}\right)} \check{f}_{\Lambda}(k \ell_\gamma(Y)) \lesssim \Lambda^{1/2}(1+|\log\injrad_1(Y)|).$$
 From~\eqref{eq:g-n-2}, we conclude~\eqref{eq:unif-bound}.

2. We now show~\eqref{eq:triv_bdd}. We have
\begin{equation*}
  \begin{split} 
  & \mathbb{E}\left(\frac{1}{n}\tr \left(\left(h \circ f_{\Lambda}\right)\left(\sqrt{\Delta_{Y}-1/4}\right)\right)-\frac{\Vol(Y)}{2\pi n}\int_{0}^{\infty} \left(h \circ f_{\Lambda}\right)(r) r\tanh(\pi r) dr\right)^2\\
  &\leq 2\mathbb{E}\left(\frac{1}{n}\tr \left( \left(h \circ f_{\Lambda}\right)\left(\sqrt{\Delta_{Y}-1/4}\right)\right)\right)^2+2\bbE\left[\frac{\Vol(Y)}{2\pi n}\int_{0}^{\infty} \left(h \circ f_{\Lambda}\right)(r) r\tanh(\pi r) dr\right]^2\\
  &\lesssim \Lambda^2\|\tilde{h}\|^2,
\end{split}
\end{equation*}
where the final inequality follows from the fact that the first term is bounded by \eqref{eq:unif-bound} and \eqref{eq:sys-2} and the fact that the second term is uniformly bounded by 
\begin{equation*}
    \left(\frac{\Vol(Y)}{2\pi n}\int_{0}^{\infty} f_{\Lambda}(r) r \tanh(\pi r) dr \right)^2\|\tilde{h}\|^2 \lesssim \Lambda^2 \|\tilde{h}\|^2.\qedhere
\end{equation*}
\end{proof}

Using Lemma~\ref{lem:triv_bdd} and the Markov brothers' inequality, we now prove Proposition~\ref{prop:expectation_squared}.

\begin{proof}[Proof of Proposition~\ref{prop:expectation_squared}]

By the Selberg trace formula \eqref{eq:selberg_trace_formula}, we have 
\begin{equation}
\label{eq:function_with_expectation}
\begin{split}
&\bbE\left[\frac{1}{n}\tr \left(\left(h\circ f_{\Lambda}\right)\left(\sqrt{\Delta_{Y}-1/4}\right)\right) -\frac{\Vol(Y)}{2 \pi n }\int_{0}^{\infty} \left(h\circ f_{\Lambda}\right)(r) r\tanh(\pi r) dr \right]^2\\
&=\bbE\left[\frac{1}{n}\sum_{\gamma \in \calP(Y)} \sum_{k=1}^\infty \frac{\ell_\gamma(Y)}{2 \sinh\left(\frac{k \ell_\gamma(Y)}{2}\right)} (h \circ f_{\Lambda})^{\vee} (k \ell_\gamma(Y)) \right]^2.
\end{split}
\end{equation} 

By \eqref{eq:eigenvalue_condition}, there are $p,p_1\leq C_0 q$ such that
\begin{equation*}
    \left|\eqref{eq:function_with_expectation} -\frac{1}{n^2 Q_q(1/n)}\sum_{j=0}^{p_1} f_j n^{-j}\right| \lesssim (C_0 p)^{\kappa p} n^{-p-2}\Lambda^2\|\tilde{h}\|^2,\quad n\geq (C_0 p)^{\kappa}.
\end{equation*}
Let $P(x)=\sum_{j=0}^{p_1} f_j x^j$. From Lemma~\ref{lem:triv_bdd}, we know
\begin{equation}
\label{eq:f_bound}
\frac{1}{n^2} |P(1/n)|\lesssim \Lambda^2\|\tilde{h}\|^2,\quad n\geq  (C_0 p)^{\kappa}.
\end{equation}

Using the Markov's brothers' inequality~\eqref{eq:markov} and \eqref{eq:f_bound}, we know that
$$\|(x^2P(x))'\|_{[0,\frac{1}{2(C_0 p)^{\kappa}}]} \lesssim q^{2\kappa } \sup_{n\geq (C_0p)^{\kappa}}\frac{1}{n^2}|P(1/n)| \lesssim q^{2\kappa}\Lambda^2 \|\tilde{h}\|^2.$$

Recall that $Q_q(1/n) \in [C_0^{-1}, C_0]$ for $n \geq (C_0 p)^\kappa$. We then take
the Taylor expansion to see
$$\left|\frac{1}{n^2} \frac{P(1/n)}{Q_{q}(1/n)}\right| \lesssim \frac{1}{n}\|(x^2 P(x))'\|_{[0,\frac{1}{2(C_0p)^{\kappa}}]} \lesssim \frac{ q^{2\kappa}\Lambda^2}{n}\|\tilde{h}\|^2,\quad n \geq 2(C_0p)^{\kappa}. $$
In other words, $\eqref{eq:function_with_expectation}\lesssim \frac{q^{2\kappa}\Lambda^2}{n}\|\tilde{h}\|^2$ for $n\geq 2(C_0p)^{\kappa}$. This shows \eqref{eq:expect-sq} for $n\geq 2 C_0^{2\kappa}q^{\kappa}\geq 2(C_0 p)^{\kappa}$. On the other hand, for $n\leq 2 C_0^{2\kappa}q^{\kappa}$, \eqref{eq:expect-sq} follows from Lemma~\ref{lem:triv_bdd}. 
\end{proof}

\section{Polynomial expansion}\label{section:polynomial_expansion}

In this section, we prove Theorem~\ref{thm:eigenfn} and Theorem~\ref{thm:cover} by showing the random cover model satisfies the conditions of Theorems~\ref{thm:eigen-g}~and~\ref{thm:lambda-g}. The key step is an expansion of the trace for random permutation representations of the surface group, which we prove in the following subsection. Recall the preliminaries for random covers given in \S\ref{subsection:surface_group}.

\subsection{Expansion of the trace}\label{subsection:expand-trace}

In \cite[\S 4]{MPvH25}, the authors examined  $\bbE[\Tr(\rho(\gamma)]$ for $\gamma\in \Gamma\setminus\{\id\}$.
We adapt the arguments from 
\cite[\S 4]{MPvH25} to prove a rational function approximation of $$\bbE\left[\Tr(\rho(\gamma_1))\Tr(\rho(\gamma_2))\right] \quad \text{and} \quad \bbE \left[\sum_{i=1}^n \prod_{\ell=1}^{8}\rho_{ii}(\gamma_\ell)\right] .$$

We begin with the following rational function approximation, which we will use to show the polynomial approximation condition \eqref{eq:eigenvalue_condition} in Theorem~\ref{thm:lambda-g}. This is the simpler of the two approximations; we present it first for the reader's convenience.

\begin{lem}\label{lem:fixed_point_estimte}
There exists $C=C(g_0)>2$ such that for $n^{-1} \in [0, (C q)^{-C}]$ and $\gamma_1,\gamma_2 \in \Gamma \setminus \{\id\}$ with $|\gamma_1|+|\gamma_2|\leq q$, 
$$\bbE\left[\Tr(\rho(\gamma_1))\Tr(\rho(\gamma_2))\right] = \frac{Q_{\gamma_1, \gamma_2}(1/n)}{Q_\id(1/n)} +  O \left((C q)^{C q} n^{-q}\right),$$
where $Q_{\gamma_1, \gamma_2}$, $Q_\id$ are polynomials with 
\begin{equation*}
    \deg (Q_{\gamma_1, \gamma_2})\leq 9 q(4g_0+1),\quad \deg(Q_\id) \leq 9 q(4g_0+1)+1,
\end{equation*} and  $Q_{\id}(1/n) \in [C^{-1},C]$ for $n\geq (Cq)^{C}$. 
\end{lem}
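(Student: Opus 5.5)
We would follow the strategy of \cite[\S 4]{MPvH25} for $\bbE[\Tr\rho(\gamma)]$, adapted to the product of two traces. The first step is to note that $\Tr(\rho(\gamma_1))\Tr(\rho(\gamma_2))$ counts pairs of fixed points $(i_1,i_2)\in[n]^2$ of $\varphi_n(\gamma_1)$ and $\varphi_n(\gamma_2)$. We would split this according to whether $i_1=i_2$ or not. Each piece can then be encoded combinatorially: take the CW complex of the surface, with one vertex, $2g_0$ edges $a_1,\dots,a_{2g_0}$ and one octagonal $2$-cell. A homomorphism $\varphi_n\in\bbX_{g_0,n}$ is the same as a degree $n$ covering of this complex. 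A pair of fixed points then corresponds to lifting the two closed loops (written as words of length $|\gamma_1|,|\gamma_2|$) to closed loops in the cover. Following \cite{MPvH25}, we would write the expectation as a sum over \emph{tiled surfaces} / core graphs $\mathcal{T}$ that could arise as images of the two lifted loops. By \cite{MPvH25}, these sums are resolved via the resolution of \cite{MP23} into strongly boundary reduced $\varepsilon$-adapted tiled surfaces. The two loops give at most $q$ edges in total, so only finitely many such objects are relevant at each stage.

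The second step uses, for each such tiled surface $\mathcal{T}$, the formula for $\bbE[\#\text{embeddings of }\mathcal{T}]$. By the Frobenius/Mednykh formula $|\Hom(\Gamma,S_n)|=n!\sum_{\lambda\vdash n}(n!/\dim\lambda)^{2g_0-2}$, this expectation is a ratio of character sums. The main analytic input of \cite[\S 4--5]{MPvH25} is that for tiled surfaces with $v$ vertices, $e$ edges and $f$ faces, this ratio equals $n^{v-e+f}$ (up to falling-factorial corrections) times a rational function of $1/n$. The denominator of that rational function is a universal polynomial $Q_\id$; it comes from the contribution of small representations to $\zeta$-type sums in $1/n$. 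Its numerator has degree controlled linearly by the number of cells, which here is $\leq 9q(4g_0+1)$. The tail, coming from representations $\lambda$ with $\lambda_1\leq n-q$ or $\lambda'_1\leq n-q$, contributes $O((Cq)^{Cq}n^{-q})$. Summing over the at most $(Cq)^{Cq}$ tiled surfaces preserves both the rational structure and the error bound. The bound $Q_\id(1/n)\in[C^{-1},C]$ for $n\ge (Cq)^C$ follows because $Q_\id(0)=1$ and its coefficients are at most $(Cq)^{Cq}$-type. Alternatively, $Q_\id$ can be taken as the normalization from $|\Hom(\Gamma,S_n)|/(n!)^{2g_0-1}=2+O(n^{-2})$ truncated.

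The main obstacle is the step that passes from two loops to one. In \cite{MPvH25}, the loop is a single non-identity element, and the resolution into $\varepsilon$-adapted surfaces is done for a single boundary cycle. Here the relevant core surface has two boundary components. It may be disconnected, when $i_1$ and $i_2$ lie in different orbits of the lifted loops, or connected. We would handle this by working with the disjoint union of the two cyclic core graphs and applying the resolution lemma to the union, which \cite{MP23} allows for arbitrary finite tiled surfaces. We would then have to check that the degree and error bounds depend only on the total length $|\gamma_1|+|\gamma_2|\leq q$. The extra factor $9$ is an allowance for the blow-up in the resolution, which multiplies the number of cells by a constant; this allowance should absorb the two-component case. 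The $i_1=i_2$ term is handled in the same way, with the two loops glued at a common vertex.
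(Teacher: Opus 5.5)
Your overall strategy matches the paper's. $\Tr\rho(\gamma_1)\Tr\rho(\gamma_2)$ counts morphisms from the disjoint union $C_{\gamma_1,\gamma_2}$ of the two labeled cycles into the Schreier graph $X_\phi$. Each such morphism factors uniquely through a folded quotient $W_r$ in which every path spelling an element of $\ker(F_{2g_0}\to\Gamma)$ is closed. The estimate of \cite[p.~26]{MPvH25} for $\bbE^{\mathrm{emb}}_n(W_r)$ then applies verbatim with $C_\gamma$ replaced by $C_{\gamma_1,\gamma_2}$.

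The paper treats this passage from one loop to two as routine, which is what you flagged as the main obstacle. It needs neither your split into $i_1=i_2$ versus $i_1\neq i_2$ (coincident base points are just particular quotients $W_r$) nor the $\varepsilon$-adapted resolution of \cite{MP23}. The sum runs over at most $q!$ graphs $W_r$, each with at most $q$ edges.

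What your proposal lacks is the one input that is actually new for two loops. Summing the estimates gives
$\frac{2}{\zeta(2g_0-2,S_n)}\bigl(p_{\gamma_1,\gamma_2}(n)/(n)_{9q}^{1+4g_0}+O((Cq)^{Cq}n^{-q})\bigr)$
with $\deg p_{\gamma_1,\gamma_2}\le 9q(4g_0+1)+2$, which is two more than the degree of the denominator. So in $t=1/n$ the main term is $\bigl(Q_{\gamma_1,\gamma_2}(t)+a_{-1}t^{-1}+a_{-2}t^{-2}\bigr)/g_q(t)$.

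To get the claimed form with a \emph{polynomial} numerator, you must show $a_{-1}=a_{-2}=0$. The paper does this with the a priori bound $\bbE[\Tr\rho(\gamma_1)\Tr\rho(\gamma_2)]=O_q(1)$ from \cite[Proposition 3.1]{Naud25}. You instead assert that each term is $n^{v-e+f}$ times a rational function of $1/n$ regular at $0$. You never show $v-e+f\le 0$, and you never exclude a pole at $1/n=0$.

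This is not automatic term by term. For a bare image graph, the expected embedding count need not be of order $n^{\chi}$: a cycle spelling the surface relator has $\chi=0$, yet its expected number of embeddings is of order $n$. So you need either that a priori bound, or an argument using $\gamma_1,\gamma_2\neq\id$ that every relevant image, after filling in the forced octagons, has $\chi\le 0$ and so contributes $O(1)$.

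A smaller point concerns $Q_\id(1/n)\in[C^{-1},C]$. This does not follow from $Q_\id(0)=1$ together with coefficients of size $(Cq)^{Cq}$. You need the $k$-th coefficient to be at most $(Cq)^{Ck}$, as holds for $g_q(t)=\prod_{k<9q}(1-kt)^{1+4g_0}$. Combined with the expansion \cite[(4.19)]{MPvH25} of $2/\zeta(2g_0-2,S_n)$, this gives the bound.
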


\begin{proof}

1. Recall that $\bbX_{g_0, n} \coloneqq \Hom(\Gamma, S_n)$ and $\Gamma$ is a surface group with $2g_0$ generators:
$\Gamma = \langle a_1, a_2, \ldots, a_{2g_0-1}, a_{2g_0} | [a_1, a_2] \cdots [a_{2g_0-1}, a_{2g_0}]=1\rangle$.

Let $\gamma_1, \gamma_2 \in \Gamma$. We write $\gamma_\ell=a_{i_{1,\ell}}^{\alpha_{1,\ell}}a_{i_{2,\ell}}^{\alpha_{2,\ell}} \cdots a_{i_{m_{\ell},\ell}}^{\alpha_{m_{\ell},\ell}}$, where $i_{k,\ell} \in \{1,2,\ldots,2g_0\}$, $\alpha_{k,\ell}\in \{-1, 1\}$, and $m_{\ell}=|\gamma_{\ell}|$ is the word length of $\gamma_{\ell}$. We assume that the above expressions of $\gamma_{\ell}$, written in terms of the generators, are cyclically reduced.  We do not assume that the corresponding closed geodesics are primitive; they may be powers of primitive geodesics.

We define the graph $C_{\gamma_1, \gamma_{2}}$ to be the graph of $2$ disjoint cycles of length $m_{\ell}$, where the $\ell$-th cycle has length $m_{\ell}$ and has directed edges labeled by $a_{i_{1,\ell}}^{\alpha_{1,\ell}}, a_{i_{2,\ell}}^{\alpha_{2,\ell}}, \ldots ,  a_{i_{m_{\ell},\ell}}^{\alpha_{m_{\ell},\ell}}$. Specifically, the  $\ell$-th cycle has a directed edge labeled by $a_{i_k, \ell}$ from the $k$-th to the $(k+1)$-th vertex when $\alpha_{k,\ell} =1$ and from the $(k+1)$-th to the $k$-th vertex when  $\alpha_{k, \ell} =-1$.

Let $\calR$ denote the collection of surjective labeled-graph morphisms 
$$r: C_{\gamma_1, \gamma_2} \twoheadrightarrow W_r$$
such that
\begin{enumerate}
    \item \label{item:condition1} $W_r$ is \emph{folded}, i.e.,
 every vertex has at most one incoming $a$-labeled edge and at most one outgoing $a$-labeled edge for each $a \in \{a_1, \ldots, a_{2g_0}\}$. 
 \item \label{item:condition2} Every path in $W_r$ that spells out an element of the free group $F_{2g_0}$  in the kernel of $F_{2g_0} \rightarrow \Gamma$ is closed.
\end{enumerate}

Note that $\# \calR \leq (|\gamma_1| + |\gamma_2|)!$.

2. 
Let $\phi \in \bbX_{g_0,n}$.
We denote the Schreier graph by 
$$X_\phi \coloneqq \text{Schreier}(\{\phi(a_1), \ldots, \phi(a_{2g_0})\}, [n]).$$
This is the graph on $n$ vertices labeled $1, \ldots, n$, where there is a directed edge from $i$ to $j$ exactly when there exists $f \in \{a_1, \ldots, a_{2g_0}\}$ such that $\phi(f)[i] = j$. Such an edge is labeled by $f$.

By construction, it is clear that
any morphism of folded labeled graphs $C_{\gamma_1, \gamma_2} \rightarrow X_\phi$ factors uniquely as a surjective morphism  followed by an injective one, namely
$C_{\gamma_1, \gamma_2} \twoheadrightarrow W_r \hookrightarrow X_\phi$
for unique $r \in \calR$.

For $\phi \in \bbX_{g_0,n}$, let $\text{Fix}(\phi(\gamma))$ denote the number of fixed elements of $[n]$ under the action of $\phi(\gamma)$.
Therefore,
\begin{align*}
\bbE [\tr(\rho(\gamma_1))\tr(\rho(\gamma_2))]&=\frac{1}{\#\bbX_{g_0,n}} \sum_{\phi \in \bbX_{g_0,n}} [\text{Fix}(\phi(\gamma_1))\text{Fix}(\phi(\gamma_2))]\\
&=\frac{1}{\# \bbX_{g_0,n}} \sum_{\phi \in \bbX_{g_0,n}}  [\# \text{ of morphisms } C_{\gamma_1, \gamma_2}\rightarrow X_\phi]\\
&=\frac{1}{\# \bbX_{g_0,n}} \sum_{r \in \calR} \sum_{\phi \in \bbX_{g_0,n}}  [\# \text{ of injective morphisms } W_r \hookrightarrow X_\phi].
\end{align*}

In other words, let
$$\bbE_n^{\text{emb}}(W_r) \coloneqq \bbE_{\phi \in \bbX_{g_0,n}}[\# \text{ of injective morphisms } W_r \hookrightarrow X_\phi],$$
$$\bbE[\tr(\rho(\gamma_1))\tr(\rho(\gamma_2))] = \sum_{r \in \calR} \bbE_n^{\text{emb}}(W_r).$$

3. We now estimate $\bbE_n^{\text{emb}}(W_r)$, closely following the arguments of \cite[\S 4.4]{MPvH25}.

Let $\zeta(s; S_n)$ be the Witten zeta function of $S_n$. We defer to  \cite[pg. 18]{MPvH25} for the formal definition. Here we just use the facts that $\zeta(s; S_n) \rightarrow 2$ as $n\rightarrow \infty$ and from \cite[(1.3)]{MP23}, $\# \bbX_{g_0,n} = (\#S_n)^{2g_0-1} \zeta(2g_0-2, S_n)$.

As noted above, \cite{MPvH25} uses $C_\gamma$ instead of $C_{\gamma_1, \gamma_2}$. However, their proofs hold as written when $C_\gamma$ is replaced by $C_{\gamma_1, \gamma_2}$. Therefore, we conclude from  \cite[pg. 26]{MPvH25} that for $|\gamma_1|+|\gamma_2| \leq q$ and $n \geq 28 q^2$,
$$ \bbE^{\text{emb}}_{n}(W_r) = \frac{2}{\zeta(2g_0-2, S_n)}\left(\frac{p_r(n)}{(n)_{9q}^{1+4g_0}} +  O \left((Cq)^{Cq} n^{-q}\right)\right).$$
Here 
$(n)_{9q}=n(n-1)\cdots(n-9q+1)$ is the falling Pochhammer symbol 
and $p_r$ is a polynomial in $n$ with $\deg(p_r) \leq 9q(4g_0+1)+2$.

Since $\#\calR \leq (|\gamma_1| + |\gamma_2|)!$, we conclude 
\begin{equation}
\label{eq:fixed_point_intermediate}
\bbE[\tr(\rho(\gamma_1))\tr(\rho(\gamma_2))] = \frac{2}{\zeta(2g_0-2, S_n)}\left(\frac{p_{\gamma_1, \gamma_2}(n)}{(n)_{9q}^{1+4g_0}} +  O \left((Cq)^{Cq} n^{-q}\right)\right),
\end{equation}
where $p_{\gamma_1, \gamma_2}$ is a polynomial in $n$ with $\deg(p_{\gamma_1, \gamma_2}) \leq 9q(4g_0+1)+2$. 

4. It remains to transform \eqref{eq:fixed_point_intermediate} into a rational function in $1/n$ with the same error term.

Let 
$$g_q(t) \coloneqq \prod_{k=0}^{9q-1}(1-kt)^{1+4g_0}.$$
For $t \coloneqq n^{-1}$ and a polynomial $P_{\gamma_1, \gamma_2}(t)$ of degree at most $\deg(p_{\gamma_1, \gamma_2})$,
\begin{align*}
&\frac{p_{\gamma_1, \gamma_2}(n)}{(n)_{9q}^{1+4g_0}} = t^{9q(1+4g_0)+2 - \deg(p_{\gamma_1, \gamma_2})}\frac{P_{\gamma_1, \gamma_2}(t)}{t^2g_q(t)} 
\\
&\eqqcolon \frac{Q_{\gamma_1, \gamma_2}(t)+a_{-1}(\gamma_1,\gamma_2)t^{-1}+a_{-2}(\gamma_1,\gamma_2)t^{-2}}{ g_q(t)}.
\end{align*}

Note that for 
$t \in [0, (Cq)^{-C}]$, $C>2$, we have $C^{-1}\leq g_q(t) \leq C$. By \cite[Proposition 3.1]{Naud25}, we have
\begin{equation}\label{eq:Q_bound-1}
    \bbE [\tr(\rho(\gamma_1))\tr(\rho(\gamma_2))] \lesssim_q 1,\quad n\to \infty.
\end{equation}
Therefore, $a_{-1}(\gamma_1,\gamma_2)=a_{-2}(\gamma_1,\gamma_2)=0$ and 
\begin{equation*}
\label{eq:Q_bound}
Q_{\gamma_1, \gamma_2}(t) = O(1), \quad \text{for } t = n^{-1} \in \left[0, (Cq)^{-C}\right].
\end{equation*}

Also note that $\deg(Q_{\gamma_1, \gamma_2}) \leq 9 q (4g_0+1)$. 

From \cite[(4.19)]{MPvH25}, for a polynomial $Q_\id(t)$ with $\deg(Q_\id) \leq 9q(4g_0+1)+1$,
$$\frac{2}{\zeta(2g_0-2, S_n)} = \frac{g_q(1/n)}{Q_\id(1/n)}\left(1 +  O \left((Cq)^{Cq}n^{-q-1}\right) \right).$$

Therefore,
\begin{align*}
&\bbE [\tr(\rho(\gamma_1))\tr(\rho(\gamma_2))] \\
& = \frac{2}{\zeta(2g_0 - 2, S_n)} \left(\frac{Q_{\gamma_1, \gamma_2}(1/n)}{g_q(1/n)} +  O \left((Cq)^{Cq} n^{-q}\right) \right)\\
& =\frac{Q_{\gamma_1, \gamma_2}(1/n)}{Q_\id(1/n)}\left(1 +  O \left((Cq)^{Cq}n^{-q-1}\right) \right) +  O \left((Cq)^{Cq} n^{-q}\right)\\
&= \frac{Q_{\gamma_1, \gamma_2}(1/n)}{Q_\id(1/n)} +  O \left((Cq)^{Cq} n^{-q}\right),
\end{align*}
which completes the proof.
\end{proof}

We now prove the analogous rational expansion needed for the polynomial approximation condition~\eqref{eq:expan-g-1} in Theorem~\ref{thm:eigen-g}.
\begin{lem}\label{lem:fix-2}
There exists $C=C(g_0)>2$ such that for $n^{-1} \in [0, (C q)^{-C}]$ and $\gamma_\ell\in \Gamma \setminus \{\id\}$ with $|\gamma_1|+|\gamma_2|+\cdots+|\gamma_{8}|\leq q$, we have
    \begin{equation}\label{eq:fix_1}
\mathbb{E}\left[\sum_{i=1}^n\prod_{\ell=1}^{8}\rho_{ii}(\gamma_\ell)\right] = \frac{Q_{\gamma_1,\gamma_2,\ldots,\gamma_8}(1/n)}{ Q_{\id}(1/n)}+O((Cq)^{C q}n^{-q-1}).
\end{equation}
where $Q_{\gamma_1, \gamma_2,\ldots,\gamma_8}$, $Q_\id$ are polynomials with degree $\leq 9 (q+1)(4g_0+1)+1 $ and  $Q_{\id}(1/n) \in [C^{-1},C]$ for $n\geq (Cq)^{C}$.

Moreover, if $(\gamma_1,\ldots,\gamma_8) \in (\tilde{\Gamma}^4)^2$  
then 
\begin{equation}\label{eq:fix_2}
    Q_{\gamma_1,\ldots,\gamma_8}(0)=0.
\end{equation}
\end{lem}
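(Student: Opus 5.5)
The plan is to run the argument of Lemma~\ref{lem:fixed_point_estimte} again, with the graph $C_{\gamma_1,\gamma_2}$ replaced by a ``bouquet'' graph. Write each $\gamma_\ell$ as a reduced word (not necessarily cyclically reduced, since $\gamma_\ell$ is now a based loop). Let $B_{\gamma_1,\ldots,\gamma_8}$ be the graph obtained by gluing eight labeled cycles, the $\ell$-th spelling $\gamma_\ell$, at a single common base vertex $o$.

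Since $\rho_{ii}(\gamma_\ell)=\mathbbm{1}[\phi(\gamma_\ell)i=i]$, we get
\[
\sum_i\prod_\ell\rho_{ii}(\gamma_\ell)=\#\{\text{morphisms } B_{\gamma_1,\ldots,\gamma_8}\to X_\phi\},
\]
where the base vertex $o$ is sent to $i$. Each such morphism factors uniquely as $B\twoheadrightarrow W_r\hookrightarrow X_\phi$, with $r$ ranging over a set $\calR$ of surjections onto folded graphs satisfying condition~(\ref{item:condition2}), and $\#\calR\le q!$. This gives
\[
\bbE\Big[\sum_i\prod_\ell\rho_{ii}(\gamma_\ell)\Big]=\sum_{r\in\calR}\bbE_n^{\mathrm{emb}}(W_r).
\]
I would then invoke the estimate from \cite[\S 4.4, pg.~26]{MPvH25} for $\bbE_n^{\mathrm{emb}}(W_r)$. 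It uses only that $W_r$ is a finite folded graph with at most $q$ edges, so it applies verbatim. I would take it with $q$ replaced by $q+1$ so the error becomes $O((Cq)^{Cq}n^{-q-1})$, which accounts for the degree bound $9(q+1)(4g_0+1)+1$. Step~4 of Lemma~\ref{lem:fixed_point_estimte} then converts $p_r(n)/(n)_{9(q+1)}^{1+4g_0}$ into a rational function in $t=1/n$ with the same denominator $Q_\id$ from \cite[(4.19)]{MPvH25}.

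To rule out negative powers of $t$, I would use the trivial bound
\[
0\le \sum_i\prod_\ell\rho_{ii}(\gamma_\ell)\le n .
\]
This bounds the expectation by $n$, which permits at most one $t^{-1}$ term. To handle it, I would normalize by $1/n$ (equivalently, note that $\bbE_n^{\mathrm{emb}}(W_r)\sim c_r n^{\chi(W_r)}$ with $\chi(W_r)\le 1$) and read off the coefficient structure. In fact $\chi(W_r)\le 0$ unless $W_r$ is a tree, and a tree image is impossible because each $\gamma_\ell\neq\id$ is a nontrivial reduced closed loop. Hence the expectation is $O(1)$, and $Q_{\gamma_1,\ldots,\gamma_8}$ is a genuine polynomial.

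For \eqref{eq:fix_2}, I need the constant term $\lim_{n\to\infty}\bbE[\cdot]$ to vanish when $(\gamma_1,\ldots,\gamma_8)\in(\tilde\Gamma^4)^2$. The leading order of $\bbE_n^{\mathrm{emb}}(W_r)$ is $n^{\chi(W_r)}$, as in \cite{MPvH25}, and for a connected folded graph $\chi(W_r)=1-\operatorname{rank}\pi_1(W_r)$. The image subgroup $\pi_1(W_r,o)\to\Gamma$ contains $\gamma_1,\ldots,\gamma_4$. Since the quadruple is in $\tilde\Gamma^4$, these are not all powers of a common element. In a surface group, two elements that are not powers of a common element generate a non-cyclic subgroup, which is free of rank $\ge 2$. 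Using condition~(\ref{item:condition2}) together with the injectivity arguments of \cite{MPvH25}, this forces $\operatorname{rank}\pi_1(W_r)\ge 2$, hence $\chi(W_r)\le -1$ and $\bbE_n^{\mathrm{emb}}(W_r)=O(1/n)$. Summing over $r$, the $t^0$ coefficient vanishes.

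I expect the main obstacle to be justifying that the leading exponent of $\bbE_n^{\mathrm{emb}}(W_r)$ is exactly $\chi(W_r)$ \emph{uniformly in the rational-function expansion}. That is, the polynomial $p_r$ from \cite{MPvH25} must have degree consistent with $n^{\chi(W_r)}$, so that vanishing of the limit genuinely gives $Q(0)=0$ and not merely smallness. I would first try to extract this from the explicit formula in \cite[\S 4.4]{MPvH25}, which expresses $\bbE_n^{\mathrm{emb}}$ as $n^{\chi}$ times a ratio of Witten-zeta-type sums. If that fails, I would instead argue abstractly: the rational expression equals the expectation up to $O(n^{-q-1})$, and the expectation is $O(1/n)$ by a direct bound such as \cite[Proposition 3.1]{Naud25}. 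Comparing these at $t\to0$ then forces $Q(0)=0$.
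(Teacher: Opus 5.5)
The skeleton of your proposal is the paper's: the bouquet graph, the factorization through $\mathcal{R}$, applying \cite{MPvH25} with $q+1$, and forcing $Q(0)=0$ from a vanishing limit. The gap is in the two estimates that make it work, namely boundedness of the expectation and its $O(1/n)$ decay for tuples in $(\tilde\Gamma^4)^2$.

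Your primary route rests on $\bbE_n^{\mathrm{emb}}(W_r)\sim c_r n^{\chi(W_r)}$, where $\chi(W_r)=1-\operatorname{rank}\pi_1(W_r)$ is the graph Euler characteristic. That is a free-group fact, and it is false for surface groups. In $X_\phi$ every path reading the relator $R=[a_1,a_2]\cdots[a_{2g_0-1},a_{2g_0}]$ closes up, so a single cycle spelling $R$ has $\chi=0$ but about $n$ embeddings. Such cycles do occur in the $W_r$: condition (2) forces one whenever prefixes of two of the words $\gamma_\ell$ are distinct words for the same element of $\Gamma$. The correct exponent is the Euler characteristic of the tiled surface obtained by gluing octagons onto these cycles, and controlling it is the Magee--Puder core-surface theory, not a graph count. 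So neither ``$\chi(W_r)\le 0$ unless $W_r$ is a tree'' nor ``$\operatorname{rank}\pi_1(W_r)\ge 2$ gives $O(1/n)$'' proves what you need.

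Your fallback has the right logic, but \cite[Proposition 3.1]{Naud25} does not supply the input. It only gives the $O(1)$ bound on $\bbE[\tr\rho(\gamma_1)\tr\rho(\gamma_2)]$ used in Lemma~\ref{lem:fixed_point_estimte}, and says nothing about $1/n$ decay for common fixed points.

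The missing ingredient is \cite[Theorem 1.3]{MP23}: for finitely generated $H\le\Gamma$, $\bbE[\mathrm{fix}_H]=O(n^{\chi(H)})$. The paper uses it as follows.
\begin{enumerate}
\item Choose $j,k$ with $\langle\gamma_j,\gamma_k\rangle\cong F_2$. Your maximal-cyclic-subgroup reasoning produces a non-commuting pair, and a non-cyclic $2$-generated subgroup of $\Gamma$ is free of rank $2$.
\item Use the pointwise bound $\sum_i\prod_{\ell=1}^8\rho_{ii}(\gamma_\ell)\le\sum_i\rho_{ii}(\gamma_j)\rho_{ii}(\gamma_k)=\mathrm{fix}_{\langle\gamma_j,\gamma_k\rangle}$, valid because all entries are $0$ or $1$.
\item Conclude that the expectation is $O_q(1/n)$. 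This removes the $t^{-1},t^{-2}$ terms at once. Since $Q_\id(0)\ge C^{-1}$, letting $t\to 0$ then forces $Q_{\gamma_1,\ldots,\gamma_8}(0)=0$.
\end{enumerate}
With this in hand, your worry about the degree of $p_r$ matching $n^{\chi(W_r)}$ is moot. For arbitrary tuples in \eqref{eq:fix_1}, boundedness likewise comes from domination rather than the tree argument. For example, use $\sum_i\prod_\ell\rho_{ii}(\gamma_\ell)\le\tr\rho(\gamma_1)\tr\rho(\gamma_2)$ together with the $O(1)$ bound from \cite{Naud25}.
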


\begin{proof}
    Note that $\sum_{i=1}^n \prod_{\ell=1}^8 \rho_{ii}(\gamma_\ell)$  
counts the number of common fixed points of $\rho(\gamma_{\ell})$, $\ell = 1, \ldots, 8$. The expansion \eqref{eq:fix_1} follows from the argument of Lemma~\ref{lem:fixed_point_estimte}, with the following changes. We replace the graph $C_{\gamma_1,\gamma_2}$ with $\tilde{C}_{\gamma_1,\ldots,\gamma_8}$. Here,  $\tilde{C}_{\gamma_1,\ldots,\gamma_8}$ is the quotient of $C_{\gamma_1, \ldots,\gamma_8}$, a graph with 8 loops corresponding respectively to $\gamma_1, \ldots, \gamma_8$, where the quotienting action identifies the first vertices of each loop. Our proof then follows exactly as that of Lemma~\ref{lem:fixed_point_estimte}. To obtain \eqref{eq:fix_2}, first note that if $(\gamma_1,\ldots,\gamma_8) \in (\tilde{\Gamma}^4)^2$,   then some pair $\gamma_j$ and $\gamma_k$ generate a free group of rank $2$. Then it suffices to replace \eqref{eq:Q_bound-1} by the following estimate:
\begin{equation*}
   \mathbb{E}\left[\sum_{i=1}^{n}\rho_{ii}(\gamma_j)\rho_{ii}(\gamma_k)\right]=\bbE\left[ \mathrm{fix}_{\langle \gamma_j,\gamma_k\rangle} \right] \lesssim_q \frac{1}{n},
\end{equation*}
where $\mathrm{fix}_{\langle \gamma_j,\gamma_k\rangle}$ denotes the number of common fixed points of $\gamma_j$ and $\gamma_k$ and the last inequality follows from \cite[Theorem 1.3]{MP23}.
\end{proof}

\subsection{Proof of Theorem~\ref{thm:eigenfn}}\label{subsection:polynomial_eigenfunction}

Using Lemma~\ref{lem:fix-2}, we show that we can apply Theorem~\ref{thm:eigen-g} to the random cover model.
\begin{prop}\label{prop:expansion-fn}
    Let $X_n$ be random covers of $X$, $L \geq 1$, and $F_1,\ldots,F_8:(0,\infty) \to \mathbb{R}$ be continuous functions with $\supp F_{\ell} \subset [0, L]$. Let $\delta_0 = \min\{\injrad(X)/2, \mathrm{arcsinh}(1)/4\}$. Then \eqref{eq:expan-g-1} holds for some $\kappa=\kappa(g_0)>2$ and $C_0=C_0(X)>2$.
\end{prop}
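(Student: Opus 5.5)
The plan is to rewrite the integral over $Y_{\delta_0}=(X_n)_{\delta_0}$ as an integral over a fundamental domain of the base surface $X$, with the cover's sheets encoded by matrix entries of $\rho$. Then Lemma~\ref{lem:fix-2} can be applied term by term.

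\textbf{Step 1: rewrite as a finite sum of matrix coefficients.} Since $\delta_0\le \injrad(X)/2$ and injectivity radius can only increase under coverings, every point of $X_n$ has injectivity radius $\ge 2\delta_0$. Hence $(X_n)_{\delta_0}=X_n$. Let $D\subset\bbH$ be a fundamental domain for $X=\Gamma\backslash\bbH$. Then $X_n$ is covered by $D\times[n]$, and for $(z,i)\in D\times[n]$ the geodesic loops based at $(z,i)$ correspond to $\gamma\in\Gamma\setminus\{\id\}$ with $\varphi_n(\gamma)(i)=i$, the loop having length $d_\bbH(z,\gamma z)$. Thus the quantity inside the expectation in \eqref{eq:expan-g-1} equals
\begin{equation*}
\sum_{(\gamma_1,\ldots,\gamma_8)\in(\tilde\Gamma^4)^2}\int_D \prod_{\ell=1}^8F_\ell(d_\bbH(z,\gamma_\ell z))\,dz\;\sum_{i=1}^n\prod_{\ell=1}^8\rho_{ii}(\gamma_\ell).
\end{equation*}
Two checks are needed here. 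First, $\Gamma$ here is the deck group of the base, whereas $\tilde\Gamma^4$ in \eqref{eq:expan-g-1} refers to $\pi_1(X_n)$. The condition ``not all powers of a common element'' is intrinsic to the subgroup generated, so the two indexings agree. Second, only $\gamma_\ell$ with $d_\bbH(z,\gamma_\ell z)\le L$ contribute. Since $D$ is compact, these satisfy word length $|\gamma_\ell|\le C_X L$, and by Lemma~\ref{lem:geodesic_loop_bound} (applied on $X$) there are at most $e^{C L}$ of them. So the sum is finite, with total word length $q:=\sum_\ell|\gamma_\ell|\le 8C_XL$.

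\textbf{Step 2: apply Lemma~\ref{lem:fix-2} term by term.} Each expectation $\bbE[\sum_i\prod_\ell\rho_{ii}(\gamma_\ell)]$ is replaced by $Q_{\gamma_1,\ldots,\gamma_8}(1/n)/Q_\id(1/n)+O((Cq)^{Cq}n^{-q-1})$, using the single common $Q_\id$ for the maximal $q\asymp L$. The lemma is stated with $\sum|\gamma_\ell|\le q$, and $Q_\id$ depends only on $q$ and $g_0$, so padding to the maximal $q$ is harmless. Summing gives the approximant $\frac{1}{Q_L(1/n)}\sum_j f_j n^{-j}$ with $Q_L=Q_\id$, where
\begin{equation*}
\sum_j f_jt^j=\sum_{(\gamma_\ell)}\Big(\int_D\prod_\ell F_\ell\,dz\Big)Q_{\gamma_1,\ldots,\gamma_8}(t).
\end{equation*}
This has degree $p_1\lesssim_{g_0}L$. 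The constant term vanishes by \eqref{eq:fix_2}, since every tuple in the sum lies in $(\tilde\Gamma^4)^2$; this is why the sum in \eqref{eq:expan-g-1} starts at $j=1$. The coefficients $f_j$ are independent of $n$. The bound $Q_L(1/n)\in[C_0^{-1},C_0]$ for $n\ge (C_0p)^\kappa$ is inherited from Lemma~\ref{lem:fix-2}.

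\textbf{Step 3: error bound (main obstacle).} The accumulated error is at most
\begin{equation*}
\#\{\text{tuples}\}\cdot \Vol(D)\prod_\ell\|F_\ell\|_{(0,L]}\cdot(Cq)^{Cq}n^{-q-1}\le e^{8CL}(CL)^{CL}n^{-q-1}\prod_\ell\|F_\ell\|.
\end{equation*}
This must be put in the form $(C_0p)^{\kappa p}n^{-p-1}$ with $p\le C_0L$, which requires matching the exponent of $n$ with $p$. I would set $p=q=\lceil C'L\rceil$, i.e.\ run Lemma~\ref{lem:fix-2} at a truncation order proportional to $L$ rather than at the actual word lengths. I would absorb $e^{8CL}$ into $(C_0p)^{\kappa p}$ by enlarging $C_0$, and choose $\kappa\ge C(g_0)$ so that $(Cq)^{Cq}\le (C_0p)^{\kappa p}$; this also makes the range condition $n\ge (C_0p)^\kappa$ imply the lemma's hypothesis $n\ge (Cq)^C$. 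The delicate point is this bookkeeping: the degree of $Q_\id$ and the error order $n^{-q-1}$ must be tied to the same parameter $p$ uniformly over all tuples, while $\kappa$ depends only on $g_0$ and $C_0$ only on $X$ (through the word-length/displacement comparison on $D$ and $\Vol(D)$).
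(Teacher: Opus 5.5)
Your proposal is correct and follows the paper's proof. As in the paper, you rewrite the expectation as an integral over $F_X\times[n]$ of products $\rho_{ii}(\gamma_1)\cdots\rho_{ii}(\gamma_8)$, restrict to tuples of word length $O(L)$, and apply Lemma~\ref{lem:fix-2} termwise at a truncation order $p\asymp L$, with \eqref{eq:fix_2} removing the constant term; you then absorb the $e^{8p}$ count of tuples from Lemma~\ref{lem:geodesic_loop_bound} into $(C_0p)^{\kappa p}$. Your extra checks fill in details the paper leaves implicit: that $(X_n)_{\delta_0}=X_n$, and that the $\tilde\Gamma^4$ condition means the same thing whether read in $\pi_1(X_n)$ or in $\Gamma$.
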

\begin{proof}
    In the random cover model, a point is given by $(z,i)\in \mathbb{H}\times [n]$. Suppose a fundamental domain of $X$ is given by $F_X$. The expectation in \eqref{eq:expan-g-1} becomes
    \begin{equation}\label{eq:expect-fn-expand}
    \begin{split}
        &\mathbb{E}
\left[ \int_{F_X} \sum_{i\in [n]}  \sum_{(\gamma_1, \ldots, \gamma_8) \in (\tilde{\Gamma}^4)^2} \rho_{ii}(\gamma_1) \cdots \rho_{ii}(\gamma_8) F_1(d_{\mathbb{H}}(z,\gamma_1 z) ) \cdots  F_8(d_{\mathbb{H}}(z,\gamma_8 z))  dz  \right]   \\
&=\int_{F_X} \sum_{(\gamma_1, \ldots, \gamma_8) \in (\tilde{\Gamma}^4)^2}  \sum_{i\in [n]}\mathbb{E}
\left[ \rho_{ii}(\gamma_1) \cdots \rho_{ii}(\gamma_8) \right]  F_1(d_{\mathbb{H}}(z,\gamma_1 z) ) \cdots  F_8(d_{\mathbb{H}}(z,\gamma_8 z))  dz .
    \end{split}
\end{equation}
From \cite[Lemma 2.3]{MNP22}, we have the following relationship between the length of geodesics $\ell_\gamma(X)$ and word length $|\gamma|$:
\begin{equation}
\label{eq:compare_length}
|\gamma| \leq K_1 \ell_{\gamma}(X) + K_2,
\end{equation}
where $K_1, K_2>0$ depend only on $X$ and the choice of generators. 
Using $d_{\bbH}(z,\gamma z) \geq \ell_\gamma(X)$ and \eqref{eq:compare_length}, the terms on the right-hand side of \eqref{eq:expect-fn-expand} are zero unless the word lengths of $\gamma_\ell$ satisfy $|\gamma_\ell| \leq C L$ for $\ell=1,2,\ldots,8$. We  may assume $|\gamma_\ell|\leq p/8$ by taking $p\geq 8CL$.

Thus, \eqref{eq:expan-g-1} follows from Lemma~\ref{lem:fix-2} with the remainder given by 
\begin{equation*}
    \prod_{\ell=1}^{8}\left( \sum_{d_{\mathbb{H}}(\gamma_{\ell}z, z)\leq L} \|F_{\ell}\|_{(0,L]}\right) (Cp)^{Cp}n^{-p-1} . 
\end{equation*}
Using Lemma~\ref{lem:geodesic_loop_bound}, we have 
\begin{align*}
    &\prod_{\ell=1}^8 \left(\sum_{d_{\mathbb{H}}(\gamma_{\ell}z, z)\leq L} \|F_{\ell}\|_{(0,L]} \right) (Cp)^{Cp}n^{-p-1} \\
    &\lesssim e^{8p}(Cp)^{Cp}n^{-p-1} \prod_{\ell=1}^{8}\|F_{\ell}\|_{(0,L]}\leq (C_0 p)^{\kappa p} n^{-p-1} \prod_{\ell=1}^{8}\|F_{\ell}\|_{(0,L]}.\qedhere
\end{align*}
\end{proof}

Finally, we apply Theorem~\ref{thm:eigen-g} to conclude Theorem~\ref{thm:eigenfn}.

\begin{proof}[Proof of Theorem~\ref{thm:eigenfn}]
Set $2\delta_0 =\min\{\mathrm{arcsinh}(1)/2, \injrad(X)\}$.
From Theorem~\ref{thm:eigen-g} and Proposition~\ref{prop:expansion-fn}, we conclude 
\begin{equation}\label{eq:proof1-1}
    \|u_j\|_{L^{\infty}(X_n)}\lesssim \Lambda^2 n^{-\alpha}\|u_j\|_{L^2(X_n)}.
\end{equation}
We interpolate \eqref{eq:proof1-1} with the ``trivial" estimate (see \cite{donnelly}):
    \begin{equation}\label{eq:interpo-1}
        \|u_j\|_{L^{\infty}(X_n)}\leq C\Lambda^{1/4}\|u_j\|_{L^2(X_n)},
    \end{equation}
where $C$ is uniform over all choices of covers
to conclude Theorem~\ref{thm:eigenfn}.
\end{proof}

\begin{rem}\label{rem:exchange}
The estimate \eqref{eq:linfty} implies the following bound \eqref{eq:isotropic}, which is an analogue of the graph theoretic isotropic delocalization result \cite[Corollary 1.3 (1)]{BHY}. We briefly recount the argument.

Let $a_m\in C^{\infty}(F)$ be test functions on a fundamental domain $F$ with $$\int_{F}|a_m(z)| dz\leq 1,\quad m=1,2,\ldots, M.$$ 
Let $u_j^{X_n}(z, i)$ be the $j$th eigenfunction on $X_n$.
Then the random variables
    \begin{equation*}
        Y_{i,j,m}:=\int_{F}a_m(z) u_j^{X_n}(z,i) dz,\quad i\in [n],\, j\geq 0,\, m=1,2,\ldots, M,
    \end{equation*}
    satisfy the exchangeability assumption in \cite[(8.2)]{BKY}. Let $\mathcal{A}$ be the event that \eqref{eq:linfty} holds. For any deterministic $t_i$, $i=1,2,\ldots,n$, with $\sum_{i=1}^{n} t_i=0$ and $\sum_{i=1}^{n} t_i^2\leq 1$, by \cite[Proposition 8.3]{BKY}, we have
\begin{equation*}
    \left(\bbE \left| \sum_{i=1}^{n} t_i \mathbf{1}_{\mathcal{A}}  Y_{i,j,m} \right|^p \right)^{1/p}\leq C\frac{p^2}{\log p} \left(\bbE |\mathbf{1}_{\mathcal{A}}  Y_{1,j,m}|^p\right)^{1/p}\leq C\frac{p^2}{\log p} (1+\lambda_j(X_n))^{3/2} n^{-\alpha},\quad p\geq 2.
\end{equation*}
By Chebyshev's inequality with $p=\zeta (\log \zeta)^{1/4}$ for $\zeta\gg 1$, we have
\begin{equation*}
    \mathbb{P}\left(\left| \sum_{i=1}^{n} t_i \mathbf{1}_{\mathcal{A}}  Y_{i,j,m} \right| > C (1+\lambda_j(X_n))^{3/2} \zeta^2 n^{-\alpha} \right)\leq e^{-\zeta(\log \zeta)^{1/4}}.
\end{equation*}
Taking $\zeta=\log n + \log M + \log (2+\lambda_j(X_n))$, conditional on $\mathcal{A}$ (which holds with probability at least $1-n^{-1/10}$), with very high probability,
\begin{equation}\label{eq:isotropic}
    \left| \sum_{i=1}^{n} t_i Y_{i,j,m} \right|\lesssim  (1+\lambda_j(X_n))^{3/2}(\log n + \log M +\log (2+\lambda_j(X_n))  )^2n^{-\alpha}
\end{equation}
for all $j\geq 0$ and $m=1,2,\ldots, M$.

If we repeat the same argument for the probabilistic QUE estimate in \cite[Corollary 1.3 (2)]{BHY}, we conclude an estimate with a $\sqrt{|\mathcal{X}|}n^{-2\alpha+\epsilon}$-error bound for a deterministic index set $\mathcal{X}\subset [n]$. Since $|\mathcal{X}|\leq n$, this error term is small only if $\alpha>1/4$. We note that \cite[(1.8)]{BHY} corresponds to the case that $\alpha=1/2-\epsilon$. Since our $\alpha$ in Theorem~\ref{thm:eigenfn} is smaller than $1/4$, we cannot conclude a strong result for the probabilistic QUE.
\end{rem}

\subsection{Proof of Theorem~\ref{thm:cover}} \label{subsubsection:polynomial_expansion}

Using Lemma~\ref{lem:fixed_point_estimte}, we show that we can apply Theorem~\ref{thm:lambda-g} to the random cover model.

\begin{prop}\label{prop:rational_function}
Let $X_n$ be random covers of $X$, $L \geq 1$, and $F:(0,\infty) \to \mathbb{R}$ be a continuous function with $\supp F \subset [0, L]$. Then \eqref{eq:eigenvalue_condition} holds for some $\kappa=\kappa(g_0)>2$ and $C_0=C_0(X)>2$.
\end{prop}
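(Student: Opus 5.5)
We follow the proof of Proposition~\ref{prop:expansion-fn}, with Lemma~\ref{lem:fixed_point_estimte} replacing Lemma~\ref{lem:fix-2}.

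\emph{Rewriting as a sum over group elements.} For the random cover $X_n=\Gamma\backslash_{\varphi_n}(\bbH\times[n])$, closed geodesics on $X_n$ correspond to $\varphi_n$-orbits on pairs (conjugacy class in $\Gamma$, index $i$). Grouping a closed geodesic on $X_n$ by the geodesic it covers on $X$ gives the standard identity
\begin{equation*}
\sum_{\gamma\in\calP(X_n)}\sum_{k\geq1}\frac{\ell_\gamma}{2\sinh\frac{k\ell_\gamma}{2}}F(k\ell_\gamma)=\sum_{\gamma\in\calP(X)}\sum_{m\geq1}\frac{\ell_\gamma(X)}{2\sinh\frac{m\ell_\gamma(X)}{2}}F(m\ell_\gamma(X))\,\Tr\rho(\gamma^m).
\end{equation*}
This is the twisted Selberg trace formula for the permutation representation $\rho$. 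The left-hand side of \eqref{eq:eigenvalue_condition} is therefore a double sum over $(\gamma_1,m_1),(\gamma_2,m_2)$ with deterministic weights $w_j=\frac{\ell_{\gamma_j}}{2\sinh(m_j\ell_{\gamma_j}/2)}F(m_j\ell_{\gamma_j})$, times $\bbE[\Tr\rho(\gamma_1^{m_1})\Tr\rho(\gamma_2^{m_2})]$. Only finitely many terms survive, since $F$ is supported in $[0,L]$. For each surviving term, \eqref{eq:compare_length} gives $|\gamma_j^{m_j}|\leq K_1L+K_2$, using the cyclically reduced representative from Notation~\ref{notation:gamma}. Choosing $p\geq C L$ large enough that $|\gamma_1^{m_1}|+|\gamma_2^{m_2}|\leq p$ keeps $p\leq C_0L$.

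\emph{Inserting the rational expansion.} Lemma~\ref{lem:fixed_point_estimte} with $q=p$ gives, for $n\geq (Cp)^C$,
\[
\bbE[\Tr\rho\Tr\rho]=\frac{Q_{\gamma_1^{m_1},\gamma_2^{m_2}}(1/n)}{Q_\id(1/n)}+O\bigl((Cp)^{Cp}n^{-p}\bigr).
\]
Here $Q_\id$ is independent of the $\gamma$'s, so it serves as $Q_L$ and satisfies $Q_L(1/n)\in[C^{-1},C]$. Summing over the finitely many terms, set $\sum_j f_jn^{-j}:=\sum w_1w_2\,Q_{\gamma_1^{m_1},\gamma_2^{m_2}}(1/n)$. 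This is a polynomial of degree $p_1\leq 9p(4g_0+1)\lesssim L$, and its coefficients are independent of $n$ because the weights depend only on $X$. The remainder is bounded by
\[
(Cp)^{Cp}n^{-p}\Bigl(\sum_{\ell_{\gamma^m}(X)\leq L}|w|\Bigr)^2.
\]
The number of conjugacy classes with $\ell\leq L$ is $\lesssim e^{L}$, which is also a consequence of Lemma~\ref{lem:geodesic_loop_bound} for the fixed compact $X$. Each weight satisfies $|w|\lesssim \|F\|_{(0,L]}$, uniformly because $\ell_\gamma(X)\geq 2\injrad(X)>0$. The remainder is therefore $\lesssim e^{2L}(Cp)^{Cp}n^{-p}\|F\|^2_{(0,L]}$. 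Since $L\lesssim p$, this is at most $(C_0p)^{\kappa p}n^{-p}\|F\|^2_{(0,L]}$ for suitable $\kappa(g_0),C_0(X)$, valid for $n\geq (C_0p)^\kappa\geq (Cp)^C$. This is exactly \eqref{eq:eigenvalue_condition}. Note that \eqref{eq:eigenvalue_condition} allows a nonzero constant term $f_0$ and error $n^{-p}$, which matches Lemma~\ref{lem:fixed_point_estimte} without needing any vanishing at $0$.

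\emph{Main obstacle.} The main difficulty is the bookkeeping in the first step. The sum over $\calP(X_n)$ weighted by $\ell/\sinh$ must be matched with the sum over $\calP(X)$ and powers weighted by $\Tr\rho(\gamma^m)$. The point to check is that a primitive geodesic of $X_n$ covering $\gamma^{d}$ (with $\gamma\in\calP(X)$) has length $d\ell_\gamma$. It then contributes, for each of its powers, exactly $d$ fixed points in $\Tr\rho(\gamma^{dk})$, while the weight factor uses $\ell_{\text{prim}}=d\ell_\gamma$; these two factors of $d$ must cancel consistently. I would verify this via the twisted trace formula, $\tr$ of the heat-type kernel on $L^2(X_n)=L^2(X;\rho)$, which gives the identity directly with $\Tr\rho(\gamma^m)$. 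The remaining checks are to confirm that $Q_\id$ is uniform over all pairs for fixed $p$ and to absorb the $e^{2L}$ counting factor into $(C_0p)^{\kappa p}$.
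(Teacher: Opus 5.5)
Your proposal is correct and follows essentially the same route as the paper. The paper also rewrites the $X_n$ double sum via the identity $\Tr\rho(\gamma_0^k)=\sum_{\delta_0^m\in\pi^{-1}(\gamma_0^k)}\ell_{\delta_0}(X_n)/\ell_{\gamma_0}(X)$, checked by orbit counting. It then applies Lemma~\ref{lem:fixed_point_estimte} with $p\gtrsim L$ via \eqref{eq:compare_length}, taking $Q_L=Q_\id$, and absorbs the $e^{2p}$ geodesic count into $(C_0p)^{\kappa p}$.
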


\begin{proof}
1. First, we claim that
\begin{equation}
\label{eq:twisted_selberg_trace}
\begin{split}
&\sum_{k_{1}, k_2} \sum_{\gamma_{1},  \gamma_{2} \in \calP(X_n)} \frac{\ell_{\gamma_1}(X_n)}{2\sinh \frac{k_1\ell_{\gamma_1}(X_n)}{2}}\frac{\ell_{\gamma_2}(X_n)}{2\sinh \frac{k_2\ell_{\gamma_2}(X_n)}{2}} F(k_1\ell_{\gamma_1}(X_n))F(k_2\ell_{\gamma_2}(X_n))\\
&=  \sum_{k_{1}, k_2} \sum_{\gamma_{1},  \gamma_{2} \in \calP(X)}  \frac{\ell_{\gamma_1}(X)}{2\sinh \frac{k_{1}\ell_{\gamma_1}(X)}{2}}\frac{\ell_{\gamma_2}(X)}{2\sinh \frac{k_{2}\ell_{\gamma_2}(X)}{2}}F(k_1\ell_{\gamma_1}(X))F(k_2\ell_{\gamma_2}(X))\tr(\rho(\gamma_1^{k_1}))\tr(\rho(\gamma_2^{k_2})).
\end{split}
\end{equation}
Define the projection $\pi:X_n\to X$. Recall $\rho = \mathrm{std}_n \circ \varphi_n: \Gamma \rightarrow S_n$. A closed geodesic $\delta$ in $X_n$ corresponds to a closed geodesic $\pi(\delta)$ in $X$ with the same length. Moreover, let $\gamma_0\in \mathcal{P}(X)$, then the number of closed geodesics $\delta=\delta_0^m$, $\delta_0 \in \calP(X_n)$, mapping to $\gamma_0^k$ is given by
\begin{equation*}
     \# \{\delta_0 \in \calP(X_{n}) \mid  \pi(\delta_0^m)=\gamma_0^k\}=\#\{\text{length $k/m$ orbits in the action $\varphi_n(\gamma_0)\in S_n$ }\} .
\end{equation*}
To see this, suppose $\delta$ starts at $y_0\in X_n$ and $\pi(y_0)=x_0$. Then $\delta$ goes through $k/m$ copies of $\mathbb{H}$ in the identification \eqref{eq:cover-q}. Now \eqref{eq:twisted_selberg_trace} follows from
\begin{equation*}
\begin{split}
    \Tr(\rho(\gamma_0^k))=\#\mathrm{Fix}(\varphi_n(\gamma_0^k))&=\sum_{m|k}\sum_{\substack{\text{ $k/m$ orbits in the}\\ \text{action $\varphi_n(\gamma_0)\in S_n$ }}} \frac{k}{m}\\
    &=\sum_{m|k}\sum_{\substack{\delta_0\in \calP(X_{\rho}), \\ \pi(\delta_0^m)=\gamma_0^k}} \frac{\ell_{\delta_0}(X_n)}{\ell_{\gamma_0}(X)}\\
    &=\sum_{\delta_0^m\in \pi^{-1}(\gamma_0^k)}\frac{\ell_{\delta_0}(X_n)}{\ell_{\gamma_0}(X)}.
\end{split}
\end{equation*}

2. By \eqref{eq:compare_length}, we know $k_{\ell}|\gamma_{\ell}|\leq CL$. We may assume $k_{\ell}|\gamma_{\ell}|\leq p/2$ by taking $p\geq 2CL$.
By Lemma~\ref{lem:fixed_point_estimte}, we have \eqref{eq:eigenvalue_condition} with remainder
\begin{equation*}
\begin{split}
     &\left|\sum_{k_{1}, k_2} \sum_{\ell_{\gamma_1}(X),\ell_{\gamma_2}(X)\leq L}  \frac{\ell_{\gamma_1}(X)}{2\sinh \frac{k_{1}\ell_{\gamma_1}(X)}{2}}\frac{\ell_{\gamma_2}(X)}{2\sinh \frac{k_{2}\ell_{\gamma_2}(X)}{2}}F(k_1\ell_{\gamma_1}(X))F(k_2\ell_{\gamma_2}(X)) \right|(Cp)^{Cp}n^{-p} \\
     &\leq e^{2p} (Cp)^{Cp}n^{-p}\|F\|_{(0,L]}^2 .\qedhere
\end{split}
\end{equation*}
\end{proof}

Finally, we apply Theorem~\ref{thm:lambda-g} to conclude Theorem~\ref{thm:cover}.

\begin{proof}[Proof of Theorem~\ref{thm:cover}]

We apply Theorem~\ref{thm:lambda-g} to the random cover model. The condition \eqref{eq:sys-2} is obvious since $\injrad(X_n)\geq \injrad(X)$. Since Proposition~\ref{prop:rational_function} verifies \eqref{eq:eigenvalue_condition}, we can apply Theorem~\ref{thm:lambda-g} so that with probability at least $1-n^{-1/10}$,
\begin{equation}\label{eq:improve-thm2}
     \left|N_{X_n}(\Lambda)-\frac{|X_n|}{2\pi} \int_0^{\sqrt{\Lambda-1/4}} r\tanh (\pi r) dr\right|\leq C n^{1-\alpha} \Lambda^2 , \quad \Lambda\in [1/4,\infty).
\end{equation}
We also have the following uniform estimate from \cite[Corollary 3.6]{MrSt14}: 
\begin{equation}\label{eq:unif-thm2}
     \left|N_{X_n}(\Lambda)-\frac{|X_n|}{2\pi} \int_0^{\sqrt{\Lambda-1/4}} r\tanh (\pi r) dr\right| \leq Cn\Lambda^{1/2}.
\end{equation}
Now \eqref{eq:weyl} follows from interpolating \eqref{eq:improve-thm2} with \eqref{eq:unif-thm2}. Finally, \eqref{eq:compare} follows from \eqref{eq:weyl}.
\end{proof}

\printbibliography

\end{document}